\documentclass[11pt]{article}
\usepackage[margin=1in]{geometry}
\usepackage{amsmath}
\usepackage{mathrsfs}
\usepackage{amsfonts}
\usepackage[mathscr]{euscript}
\usepackage{amssymb}
\usepackage{amsthm}
\usepackage{appendix}
\usepackage{tikz-cd}
\usepackage{mathtools}
\usepackage{theoremref}
\usepackage{pdfpages}
\usepackage{hyperref}
\usepackage{comment}
\usepackage{dsfont}
\usepackage{multicol}
\usepackage{graphicx}
\usepackage{float}
\usepackage{enumitem}
\usepackage{subcaption}
\usepackage{array}
\usepackage{mhsetup}
\usepackage[misc]{ifsym}

\definecolor{dhcol}{rgb}{0,0.5,0}

\definecolor{cecol}{rgb}{0,0,0.5}

\newcommand{\z}{\mathbf{z}}
\newcommand{\oz}{\overline{z}}
\newcommand{\obz}{\overline{\mathbf{z}}}
\newcommand{\w}{\mathbf{w}}
\newcommand{\zw}{\mathbf{z},\mathbf{w}}

\newcommand{\obx}{\overline{\mathbf{x}}}
\newcommand{\mres}{%
  \,\raisebox{-.127ex}{\reflectbox{\rotatebox[origin=br]{-90}{$\lnot$}}}\,%
}

\begin{document}
\newcommand{\rf}[1]{(\ref{#1})}
\newcommand{\mmbox}[1]{\fbox{\ensuremath{\displaystyle{ #1 }}}}	
\newcommand{\hs}[1]{\hspace{#1mm}}
\newcommand{\vs}[1]{\vspace{#1mm}}
\newcommand{\ri}{{\mathrm{i}}}
\newcommand{\re}{{\mathrm{e}}}
\newcommand{\rd}{\mathrm{d}}
\newcommand{\R}{\mathbb{R}}
\newcommand{\Q}{\mathbb{Q}}
\newcommand{\N}{\mathbb{N}}
\newcommand{\Z}{\mathbb{Z}}
\newcommand{\C}{\mathbb{C}}
\newcommand{\K}{{\mathbb{K}}}
\newcommand{\cA}{\mathcal{A}}
\newcommand{\cB}{\mathcal{B}}
\newcommand{\cC}{\mathcal{C}}
\newcommand{\cS}{\mathcal{S}}
\newcommand{\cD}{\mathcal{D}}
\newcommand{\cH}{\mathcal{H}}
\newcommand{\cI}{\mathcal{I}}
\newcommand{\cItilde}{\tilde{\mathcal{I}}}
\newcommand{\cIhat}{\hat{\mathcal{I}}}
\newcommand{\cIcheck}{\check{\mathcal{I}}}
\newcommand{\cIstar}{{\mathcal{I}^*}}
\newcommand{\cJ}{\mathcal{J}}
\newcommand{\cM}{\mathcal{M}}
\newcommand{\cP}{\mathcal{P}}
\newcommand{\cV}{{\mathcal V}}
\newcommand{\cW}{{\mathcal W}}
\newcommand{\scrD}{\mathscr{D}}
\newcommand{\scrS}{\mathscr{S}}
\newcommand{\scrJ}{\mathscr{J}}
\newcommand{\sD}{\mathsf{D}}
\newcommand{\sN}{\mathsf{N}}
\newcommand{\sS}{\mathsf{S}}
 \newcommand{\sT}{\mathsf{T}}
 \newcommand{\sH}{\mathsf{H}}
 \newcommand{\sI}{\mathsf{I}}
\newcommand{\done}[2]{\dfrac{d {#1}}{d {#2}}}
\newcommand{\donet}[2]{\frac{d {#1}}{d {#2}}}
\newcommand{\pdone}[2]{\dfrac{\partial {#1}}{\partial {#2}}}
\newcommand{\pdonet}[2]{\frac{\partial {#1}}{\partial {#2}}}
\newcommand{\pdonetext}[2]{\partial {#1}/\partial {#2}}
\newcommand{\pdtwo}[2]{\dfrac{\partial^2 {#1}}{\partial {#2}^2}}
\newcommand{\pdtwot}[2]{\frac{\partial^2 {#1}}{\partial {#2}^2}}
\newcommand{\pdtwomix}[3]{\dfrac{\partial^2 {#1}}{\partial {#2}\partial {#3}}}
\newcommand{\pdtwomixt}[3]{\frac{\partial^2 {#1}}{\partial {#2}\partial {#3}}}
\newcommand{\bnabla}{\boldsymbol{\nabla}}
\newcommand{\dive}{\boldsymbol{\nabla}\cdot}
\newcommand{\curl}{\boldsymbol{\nabla}\times}
\newcommand{\Phixy}{\Phi(\bx,\by)}
\newcommand{\PhiOxy}{\Phi_0(\bx,\by)}
\newcommand{\dxPhixy}{\pdone{\Phi}{n(\bx)}(\bx,\by)}
\newcommand{\dyPhixy}{\pdone{\Phi}{n(\by)}(\bx,\by)}
\newcommand{\dxPhiOxy}{\pdone{\Phi_0}{n(\bx)}(\bx,\by)}
\newcommand{\dyPhiOxy}{\pdone{\Phi_0}{n(\by)}(\bx,\by)}
\newcommand{\eps}{\varepsilon}
\newcommand{\real}[1]{{\rm Re}\left[#1\right]} 
\newcommand{\im}[1]{{\rm Im}\left[#1\right]}
\newcommand{\ol}[1]{\overline{#1}}
\newcommand{\ord}[1]{\mathcal{O}\left(#1\right)}
\newcommand{\oord}[1]{o\left(#1\right)}
\newcommand{\Ord}[1]{\Theta\left(#1\right)}
\newcommand{\hsnorm}[1]{||#1||_{H^{s}(\bs{R})}}
\newcommand{\hnorm}[1]{||#1||_{\tilde{H}^{-1/2}((0,1))}}
\newcommand{\norm}[2]{\left\|#1\right\|_{#2}}
\newcommand{\normt}[2]{\|#1\|_{#2}}
\newcommand{\on}[1]{\Vert{#1} \Vert_{1}}
\newcommand{\tn}[1]{\Vert{#1} \Vert_{2}}
\newcommand{\xt}{\mathbf{x},t}
\newcommand{\PhiF}{\Phi_{\rm freq}}
\newcommand{\cone}{{c_{j}^\pm}}
\newcommand{\ctwo}{{c_{2,j}^\pm}}
\newcommand{\cthree}{{c_{3,j}^\pm}}
\newtheorem{thm}{Theorem}[section]
\newtheorem{lem}[thm]{Lemma}
\newtheorem{defn}[thm]{Definition}
\newtheorem{prop}[thm]{Proposition}
\newtheorem{cor}[thm]{Corollary}
\newtheorem{rem}[thm]{Remark}
\newtheorem{conj}[thm]{Conjecture}
\newtheorem{ass}[thm]{Assumption}
\newtheorem{example}[thm]{Example}
\newcommand{\tH}{\widetilde{H}}
\newcommand{\Hze}{H_{\rm ze}} 	
\newcommand{\uze}{u_{\rm ze}}		
\newcommand{\dimH}{{\rm dim_H}}
\newcommand{\dimB}{{\rm dim_B}}
\newcommand{\IntClosOm}{\mathrm{int}(\overline{\Omega})}
\newcommand{\IntClosOmOne}{\mathrm{int}(\overline{\Omega_1})}
\newcommand{\IntClosOmTwo}{\mathrm{int}(\overline{\Omega_2})}
\newcommand{\Ccomp}{C^{\rm comp}}
\newcommand{\tCcomp}{\tilde{C}^{\rm comp}}
\newcommand{\uC}{\underline{C}}
\newcommand{\utC}{\underline{\tilde{C}}}
\newcommand{\oC}{\overline{C}}
\newcommand{\otC}{\overline{\tilde{C}}}
\newcommand{\capcomp}{{\rm cap}^{\rm comp}}
\newcommand{\Capcomp}{{\rm Cap}^{\rm comp}}
\newcommand{\tcapcomp}{\widetilde{{\rm cap}}^{\rm comp}}
\newcommand{\tCapcomp}{\widetilde{{\rm Cap}}^{\rm comp}}
\newcommand{\hcapcomp}{\widehat{{\rm cap}}^{\rm comp}}
\newcommand{\hCapcomp}{\widehat{{\rm Cap}}^{\rm comp}}
\newcommand{\tcap}{\widetilde{{\rm cap}}}
\newcommand{\tCap}{\widetilde{{\rm Cap}}}
\newcommand{\ccap}{{\rm cap}}
\newcommand{\ucap}{\underline{\rm cap}}
\newcommand{\uCap}{\underline{\rm Cap}}
\newcommand{\cCap}{{\rm Cap}}
\newcommand{\ocap}{\overline{\rm cap}}
\newcommand{\oCap}{\overline{\rm Cap}}
\DeclareRobustCommand
{\mathringbig}[1]{\accentset{\smash{\raisebox{-0.1ex}{$\scriptstyle\circ$}}}{#1}\rule{0pt}{2.3ex}}
\newcommand{\cirH}{\mathringbig{H}}
\newcommand{\cirHs}{\mathringbig{H}{}^s}
\newcommand{\cirHt}{\mathringbig{H}{}^t}
\newcommand{\cirHm}{\mathringbig{H}{}^m}
\newcommand{\cirHzero}{\mathringbig{H}{}^0}
\newcommand{\deO}{{\partial\Omega}}
\newcommand{\OO}{{(\Omega)}}
\newcommand{\Id}{{\mathrm{Id}}}
\newcommand{\gap}{\mathrm{Gap}}
\newcommand{\ggap}{\mathrm{gap}}
\newcommand{\isom}{{\xrightarrow{\sim}}}
\newcommand{\half}{{1/2}}
\newcommand{\mhalf}{{-1/2}}
\newcommand{\inter}{{\mathrm{int}}}

\newcommand{\Hsp}{H^{s,p}}
\newcommand{\Htq}{H^{t,q}}
\newcommand{\tHsp}{{{\widetilde H}^{s,p}}}
\newcommand{\SP}{\ensuremath{(s,p)}}
\newcommand{\Xsp}{X^{s,p}}

\newcommand{\dd}{{d}}\newcommand{\pp}{{p_*}}

\newcommand{\Rnn}{\R^{n_1+n_2}}
\newcommand{\Tr}{{\mathrm{Tr}}}

\title{Semi-discrete optimal transport methods\linebreak for the semi-geostrophic equations}
\date{}
\author{David P. Bourne\footnote{Maxwell Institute for Mathematical Sciences and Department of Mathematics, Heriot-Watt University, Edinburgh, United Kingdom.
\Letter\ d.bourne@hw.ac.uk, cpe4@hw.ac.uk, b.pelloni@hw.ac.uk.} , Charlie P. Egan\footnotemark[1]\ , Beatrice Pelloni\footnotemark[1] , Mark Wilkinson\thanks{Department of Mathematics, Nottingham Trent University, Nottingham, United Kingdom.\newline
\Letter\ mark.wilkinson@ntu.ac.uk.}}

\maketitle

\begin{abstract}
We give a new and constructive proof of the existence of global-in-time weak solutions of the 3-dimensional incompressible semi-geostrophic equations (SG) in geostrophic coordinates, for arbitrary initial measures with compact support. This new proof, based on semi-discrete optimal transport techniques, works by characterising discrete solutions of SG in geostrophic coordinates in terms of trajectories satisfying an ordinary differential equation. It is advantageous in its simplicity and its explicit relation to Eulerian coordinates through the use of Laguerre tessellations. Using our method, we obtain improved time-regularity for a large class of discrete initial measures, and we compute explicitly two discrete solutions. The method naturally gives rise to an efficient numerical method, which we illustrate by presenting simulations of a 2-dimensional semi-geostrophic flow in geostrophic coordinates generated using a numerical solver for the semi-discrete optimal transport problem coupled with an ordinary differential equation solver.
\end{abstract}

\section{Introduction}
The incompressible semi-geostrophic equations (SG) model the large-scale dynamics of rotational atmospheric flows. They can be viewed as a low Rossby number limit of the primitive equations, and are used by meteorologists to diagnose irregularities in simulated Navier-Stokes flows of the atmosphere on length scales of the order of tens of kilometres (see Cullen \cite{jp2006mathematical} and Visram, Cotter and Cullen \cite{visram2014framework}). First proposed by Eliassen \cite{eliassen1949quasi} in 1949, and subsequently developed by Hoskins \cite{hoskins1975geostrophic} in 1975, the semi-geostrophic equations have attracted significant attention from the mathematical community over the past twenty years owing partly to their connection with optimal transport theory 
(see \cite{ambrosio2014global,ambrosio2012existence, benamou1998weak, cheng2018classical, cullen2006lagrangian, cullen2001variational, cullen2007semigeostrophic, faria2009weak, feldman2013lagrangian, feldman2015semi, feldman2017semi, lisai2020smooth, loeper2006fully, lopes2002existence, oneil2020rigorous}).

In this paper we consider SG in \emph{geostrophic coordinates}, associated to flows on an arbitrary convex bounded (physical) domain $\Omega\subset\mathbb{R}^{3}$, which we interpret as the active transport equation
\begin{align}\label{eqn:SGgeoActTrans}
\partial_t  \alpha_t + \mathcal{W}[\alpha_t] \cdot \nabla \alpha_t =0
\end{align}
for the time-dependent measure-valued map $\alpha$, which is known as the \emph{potential vorticity}. The connection between SG and optimal transport is contained in the nonlocal divergence-free velocity field $\mathcal{W}[\alpha]$, which is often called the \emph{geostrophic velocity}. Let $\mathcal{B}[\alpha_t]$ denote the unique mean-zero convex function whose gradient is the optimal transport map between the Lebesgue measure on $\Omega$ and the Borel measure $\alpha_t$ with respect to the quadratic cost, and let $\mathcal{B}[\alpha_t]^*$ denote its Legendre-Fenchel transform on $\Omega$. At each time $t$, $\mathcal{W}[\alpha]$ is given by
\begin{align*}
\mathcal{W}[\alpha_t]:=J(\mathrm{id}_{\R^3}-\nabla \mathcal{B}[\alpha_t]^*),
\end{align*}
where $\mathrm{id}_{\R^3}$ denotes the identity on $\R^3$ and
\begin{align}\label{eqn:J}
J:=\left(
\begin{array}{c c c}
0 & -1 & 0 \\
1 & 0 & 0 \\
0 & 0 & 0 \\
\end{array}
\right).
\end{align}
Guided by the work of Cullen and Purser \cite{cullen1984extended}, this connection was first established rigorously by Benamou and Brenier. In \cite{benamou1998weak}, those authors proved the existence of global-in-time weak solutions of \eqref{eqn:SGgeoActTrans} for initial measures which are absolutely continuous with respect to the Lebesgue measure and have compactly-supported $L^{p}$ density for $p>3$. This result was extended by Lopes Filho and Nussenzveig Lopes in \cite{lopes2002existence} to the case where $p\geq 1$, and by Loeper \cite{loeper2006fully} and later Feldman and Tudorascu \cite{feldman2015semi} to the case where the initial measure need only have compact support in $\R^3$. In \cite[Proposition 4.14]{feldman2013lagrangian}, Feldman and Tudorascu use Ambrosio and Gangbo's abstract techniques for Hamiltonian ODEs in Wasserstein space \cite{ambrosio2008hamiltonian} to prove that when the initial measure is an arbitrary convex combination of Dirac masses there exists a global-in-time solution that maintains the discrete structure of the initial data. The two known results regarding the uniqueness of solutions of \eqref{eqn:SGgeoActTrans} are the local-in-time uniqueness of H\" older continuous periodic solutions proved in \cite{loeper2006fully}, and weak-strong uniqueness under uniform convexity proved in \cite{feldman2017semi}. Otherwise, the problem of uniqueness of solutions remains open.

The main contribution of this paper is an alternative proof the existence of global-in-time weak solutions of \eqref{eqn:SGgeoActTrans} for arbitrary compactly-supported initial measures, which uses recently developed techniques from semi-discrete optimal transport to treat the case where the initial measure is discrete (see Section \ref{sect:mainResults} and, in particular, Theorems \ref{thm:1} and \ref{thm:2}).  For a wide class of discrete initial measures our result recovers \cite[Proposition 4.14]{feldman2013lagrangian} with improved time regularity (twice continuously differentiable rather than Lipschitz) and uniqueness. More significantly, our application of semi-discrete optimal transport to SG illuminates an explicit and intuitive connection between geostrophic coordinates and corresponding flows in the physical domain $\Omega$. It also gives a constructive way of determining solutions explicitly, and it forms the basis of an effective numerical scheme, as we illustrate in Section \ref{sect:num}.

\subsection{SG in geostrophic coordinates and semi-discrete optimal transport}\label{sect:introDescription}
In this section, we describe our approach to studying \eqref{eqn:SGgeoActTrans} using \emph{semi-discrete} optimal transport, which is the special case of optimal transport in which the source measure is absolutely continuous with respect to the Lebesgue measure and the target measure is discrete. 

In recent years, semi-discrete optimal transport theory has seen significant expansion in its theoretical foundations (see
\cite{berman2020convergence,de2019differentiation,DieciWalsh2019,HartmannSchuhmacher2020,kitagawa2016convergence, LevySemiDiscrete2015,li2020quantitative,merigot2011multiscale,merigot2020quantitative,merigot2021optimal,meyron2019initialization}). It has also been applied to many diverse problems in the sciences, both within fluid dynamics \cite{gallouet2018lagrangian, levy2018notions} and elsewhere 
such as materials science \cite{BourneKokRoperSpanjer2020,BournePeletierRoper2014,Kuhn2020}, economics \cite[Chapter 5]{Galichon2016}, crowd dynamics \cite{LeclercMerigotSantambrogio2020} and image interpolation \cite{LevySemiDiscrete2015}.
Inspired by ideas in the original work of Cullen and Purser \cite{cullen1984extended} on piecewise constant solutions of semi-geostrophic slice models, we use semi-discrete optimal transport to analyse \eqref{eqn:SGgeoActTrans} in the special case where the initial potential vorticity $\alpha_0=\overline{\alpha}$ is a discrete measure, i.e.,
\begin{align*}
\overline{\alpha}=\sum_{i=1}^{N}m_{i}\delta_{\overline{z}_{i}}
\end{align*}
for some $m_{i}>0$ and $\overline{z}_{i}\in\mathbb{R}^{3}$. We show (Theorem \ref{thm:1}) that for \emph{well-prepared} discrete initial data (see Definition \ref{defn:wellPrep}) there exists a corresponding \emph{discrete solution} $t\mapsto \alpha_{t}$ of \eqref{eqn:SGgeoActTrans} of the form
\begin{align}\label{eqn:discSoln}
\alpha_{t}=\sum_{i=1}^{N}m_{i}\delta_{z_{i}(t)},
\end{align}
where the trajectories $z_i$ are twice continuously differentiable. Denoting by $\mathcal{L}^3$ the Lebesgue measure on $\R^3$ and by $\mathcal{L}^3\mres\Omega$ its restriction to $\Omega$, the optimal transport map between $\mathcal{L}^3\mres\Omega$ and $\alpha_t$ given by \eqref{eqn:discSoln} is a piecewise constant function
\begin{align*}
T=\sum_{i=1}^Nz_i\mathds{1}_{C_i},
\end{align*}
where $\{C_i\}_{i=1}^N$ is a tessellation of $\Omega$ by convex sets, known as the \emph{optimal Laguerre tesselation} (see Definitions \ref{def:LaguerreTess} and \ref{def:OWM}) generated by the \emph{seed vector} $\z:=(z_1,...,z_N)$ subject to the mass constraint
\begin{align*}
\mathcal{L}^3(C_i)=m_i\quad \forall\, i\in\{1,...,N\}.
\end{align*}
Letting $x_i(\z)$ denote the centroid of the \emph{Laguerre cell} $C_i(\z)$, one can show (see Lemma \ref{lem:odeChar}) that the time-dependent measure-valued map defined by \eqref{eqn:discSoln} is a weak solution of \eqref{eqn:SGgeoActTrans} if and only if the trajectories $z_1,...,z_N$ satisfy the ODE initial value problem (IVP)
\begin{align}\label{eqn:ODEIVPintro}
\begin{dcases}
\frac{dz_{i}}{dt}=J(z_{i}-x_{i}(\mathbf{z})),\\
z_i(0)=\overline{z}_i,
\end{dcases}
\end{align}
for $i\in\{1,...,N\}$. At each time $t$, the seeds $z_i(t)$ in geostrophic space generate a Laguerre tessellation of the physical domain $\Omega$ (see Figure \ref{fig:illustration} for a 2D illustration).

\begin{figure}
\begin{center}
\includegraphics[scale=0.4,trim={3cm 1cm 1cm 1cm},clip]{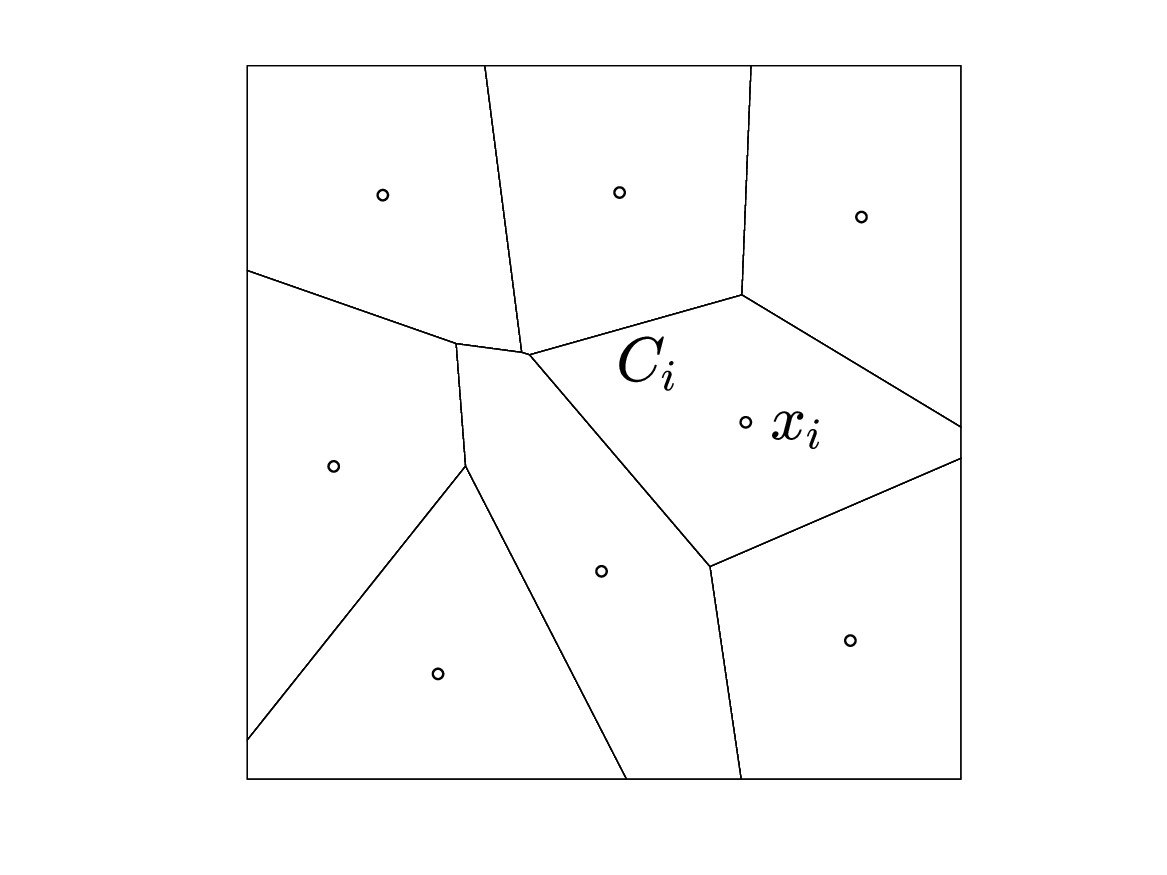}
\includegraphics[scale=0.4,trim={3cm 1cm 1cm 1cm},clip]{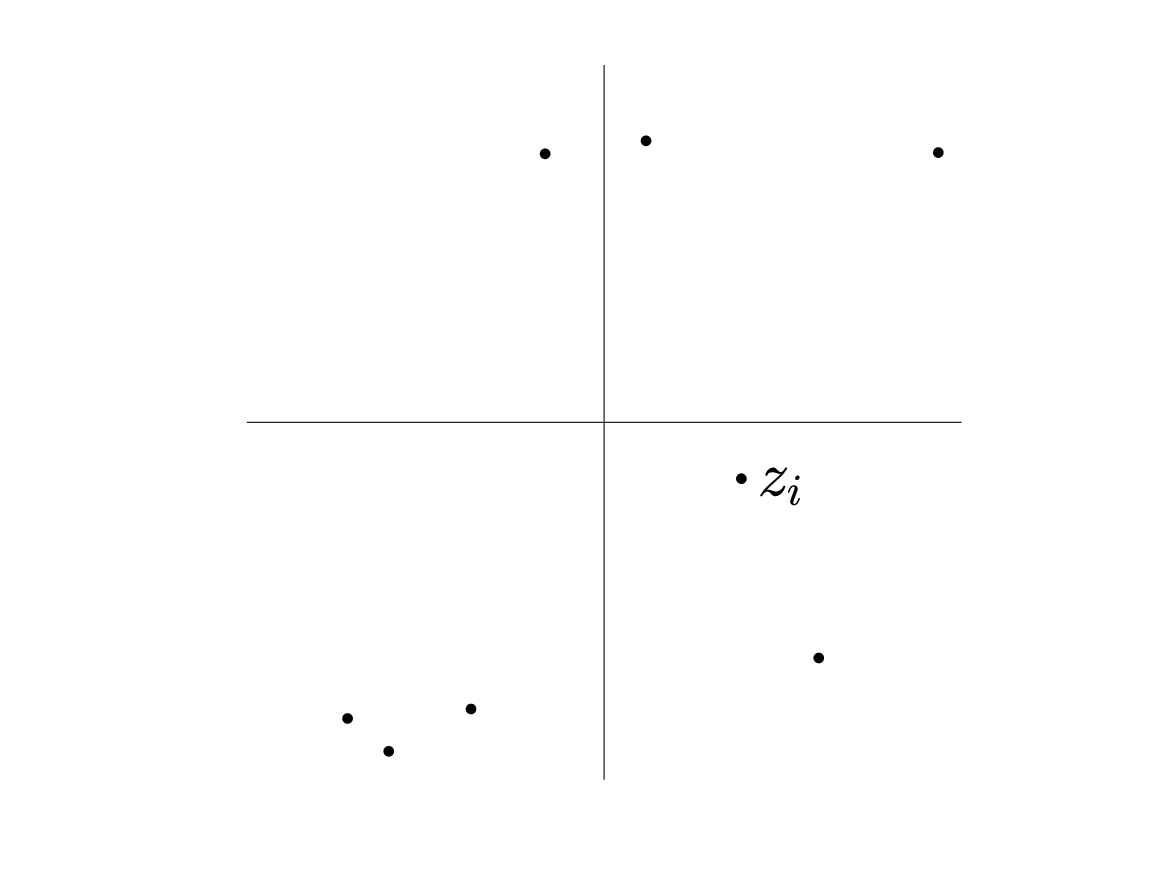}
\caption{\label{fig:illustration}Typical snapshot at a time $t$ of a 2D discrete solution $\alpha$ of \eqref{eqn:SGgeoActTrans} and the corresponding tessellation of the physical domain $\Omega$, which is taken here to be a square subset of $\R^2$. The right-hand plot shows a configuration of $N=8$ seeds $\z(t)=(z_1(t),...,z_8(t))$ in 2D geostrophic space, on which the measure $\alpha_t$ is supported. The left-hand plot shows the (approximate) Laguerre tessellation of $\Omega$ generated by $\z(t)$ subject to the constraint that all cells have the same mass. Black lines represent Laguerre cell boundaries and the circle within each cell $C_i$ represents its centroid $x_i(\z(t))$.}
\end{center}
\end{figure}

We obtain a solution of \eqref{eqn:SGgeoActTrans} for an arbitrary compactly-supported initial measure $\overline{\alpha}$ by generating a sequence $(\overline{\alpha}^N)_{N\in\N}$ of well-prepared discrete measures converging to $\overline{\alpha}$ in the Wasserstein 2-distance, evolving each of these discrete measures according to the corresponding ODE-IVP \eqref{eqn:ODEIVPintro}, and using compactness in the space of continuous measure-valued maps to pass to the limit as $N\to\infty$. As such, our construction method can be thought of as a \emph{meshless} or \emph{particle} method. By comparison with the proof given in \cite{benamou1998weak}, and later generalised in \cite{loeper2006fully} and \cite{lopes2002existence}, the discretisation occurs in the spatial domain rather than in the time domain. Note that in \cite{cullen2007semigeostrophic} Cullen, Gangbo and Pisante analyse a \emph{variant} of SG using a spatial discretisation different from the one considered in this paper. Analytically, the essential benefit of the discretisation of the initial measure $\overline{\alpha}$ is that the study of the active transport equation \eqref{eqn:SGgeoActTrans}, whose velocity field is only in general of class $BV_{\mathrm{loc}}$, is replaced by the study of the ODE-IVP \eqref{eqn:ODEIVPintro}, whose right hand side is continuously differentiable $\mathcal{L}^{3N}$-almost everywhere. Mollifications of the vector-field and related quantities used in \cite{benamou1998weak}, \cite{loeper2006fully} and \cite{lopes2002existence}, as well as the abstract techniques for Hamiltonian ODEs in Wasserstein space used in \cite{feldman2013lagrangian}, are therefore avoided, resulting in a more direct solution procedure.

\subsection{Background on the semi-geostrophic equations}

In this section, we briefly describe how equation \eqref{eqn:SGgeoActTrans} is derived. In their traditional Eulerian formulation in a fixed spatial domain $\Omega\subset\mathbb{R}^{3}$, the (non-dimensionalised) semi-geostrophic equations are given by the coupled system
\begin{align}\label{eqn:sgorig}
\left\{
\begin{array}{l}
\displaystyle \partial_{t}u_{g}+(u\cdot\nabla)u_{g}=-Ju_{a}, \vspace{2mm} \\
\displaystyle \partial_{t}\theta+(u\cdot\nabla)\theta=0, \vspace{2mm}\\
\nabla\cdot u=0,
\end{array}
\right.
\end{align}
where $u_{g}:=(u_{g, 1}, u_{g, 2}, 0)^{\mathrm{T}}$ is the \emph{geostrophic velocity field}, $u$ is the Eulerian velocity field of the fluid, $u_{a}:=u-u_{g}$ is the \emph{ageostrophic velocity field}, $\theta$ is the \emph{potential temperature}, and the matrix  $J$, defined by \eqref{eqn:J}, encodes planetary rotation. Importantly, the hydrodynamic and thermodynamic fields $u_{g}$ and $\theta$ are linked through the fluid pressure $p$ by the identity
\begin{align}\label{eqn:pressUg}
\nabla p=\left(
\begin{array}{c}
u_{g, 2} \\
-u_{g, 1} \\
\theta
\end{array}
\right).
\end{align}
When posed on a suitably smooth bounded domain $\Omega\subset\R^3$, the system \eqref{eqn:sgorig} is typically supplemented with a no-slip boundary condition $u\cdot n=0$ on $\partial\Omega$, where $n$ is the outward unit normal field on the boundary, which is sufficient to ensure that fluid points remain in the domain $\Omega$ for all times. Define the geopotential $P$ pointwise by 
\begin{align*}
P(x, t):=p(x, t)+\frac{1}{2}(x_{1}^{2}+x_{2}^{2})
\end{align*}
for $x\in\Omega$ and $t\geq 0$. The system \eqref{eqn:sgorig} can then be expressed as
\begin{align}\label{eqn:sgP}
\frac{\partial}{\partial t}\nabla P+(U[\nabla P]\cdot\nabla)\nabla P=J(\nabla P-\mathrm{id}_{\Omega}),
\end{align}
where $\mathrm{id}_{\Omega}$ denotes the identity map on $\Omega$, and $U:\nabla P\mapsto u$ is the formal solution operator associated to the (time-independent) div-curl boundary-value problem
\begin{align}\label{eqn:bvp}
\left\{
\begin{array}{l}
\nabla\wedge (D^{2}Pu)=\nabla\wedge J(\nabla P-\mathrm{id}_{\Omega}) \quad \text{in}\hspace{2mm}\Omega, \vspace{2mm} \\
\nabla\cdot u=0 \quad \text{in}\hspace{2mm}\Omega, \vspace{2mm}\\
u\cdot n=0 \quad \text{on}\hspace{2mm}\partial\Omega.
\end{array}
\right.
\end{align}
By way of this simple change of dependent variable, SG can be viewed as an inhomogeneous active transport equation \eqref{eqn:sgP} whose unknown $\nabla P$ is a time-dependent conservative vector field on $\Omega$. The Eulerian velocity field is then formally defined through the action of the solution operator $U$. This change of dependent variable also highlights a substantial mathematical difficulty one faces when constructing solutions of \eqref{eqn:sgP}: for the boundary-value problem \eqref{eqn:bvp} to be of elliptic type at each time $t$, $P(\cdot,t)$ must be strictly convex. 

The state-of-the-art regarding the existence of solutions of SG in Eulerian coordinates is due to Ambrosio, Colombo, De Philippis and Figalli \cite{ambrosio2014global}. Using the $W^{2, 1}_{\mathrm{loc}}$-regularity of Alexandrov solutions of a class of Dirichlet boundary-value problems for the Monge-Amp\`{e}re equation established in \cite{de2013w}, the authors proved the existence of global-in-time distributional solutions of SG in Eulerian coordinates posed on smooth convex domains $\Omega\subset\mathbb{R}^{3}$ for a class of initial geopotentials $P_0$ satisfying
\begin{align*}
\text{supp}\left(\nabla P_0\#\mathcal{L}^{3}\mres\Omega\right)=\R^3.
\end{align*}
However, for such solutions, the support of the pushforward measure $\nabla P(\cdot, t)\#\mathcal{L}^{3}\mres\Omega$ is the whole space $\R^3$ at each time $t$. The relation \eqref{eqn:pressUg} then implies that the temperature field $\theta$ satisfies $\theta(\cdot, t)\notin L^{\infty}(\Omega)$. Interpreted physically, this means that the atmospheric fluid is arbitrarily hot on sets of positive measure at all times. At the time of writing, the existence of either local-in-time or global-in-time distributional solutions of \eqref{eqn:sgP} for \emph{physical} initial data $P_0$ satisfying $\nabla P_{0}(\Omega)\subset\subset\mathbb{R}^{3}$ remains open.

Since the pioneering work of Hoskins \cite{hoskins1975geostrophic}, Cullen and Purser \cite{cullen1984extended}, and Benamou and Brenier \cite{benamou1998weak} on the semi-geostrophic equations, it has become customary to regard $\nabla P(\cdot, t)$ formally as a diffeomorphism between $\Omega$ and its image $\nabla P(\Omega, t)$ for each time $t$. The system \eqref{eqn:sgorig} is then transformed to the time-dependent coordinate system determined by $\nabla P$, known as \emph{geostrophic coordinates}. It is a remarkable property of SG that, as shown in \cite{benamou1998weak}, this formal change of coordinates yields a closed equation which is free of the field $u$. Indeed, under the assumption that $\nabla P$ is a smooth solution of \eqref{eqn:sgP} and $P(\cdot, t)$ is strictly convex at each time $t$, it can be shown that the time-dependent pushforward measure 
\begin{align*}
\alpha_{t}:=\nabla P(\cdot, t)\#\mathcal{L}^{3}\mres \Omega
\end{align*}
is a distributional solution of the active transport equation \eqref{eqn:SGgeoActTrans}.

\subsection{Outline of the paper}
We begin in Section \ref{sect:SDOT} with a brief introduction to semi-discrete optimal transport theory. Section \ref{sect:mainResults} contains the statement of the following existence results whose novel proofs are the main contribution of this paper: 
\begin{enumerate}
\item  \emph{discrete geostrophic solutions} with \emph{well-prepared} discrete initial data exist, are unique, and are defined by trajectories that are twice continuously differentiable in time (see Definition \ref{defn:wellPrep} and Theorem \ref{thm:1});
\item Lipschitz-in-time solutions of SG in geostrophic coordinates with arbitrary compactly-supported initial measure can be constructed as the uniform limit of a sequence of \emph{discrete geostrophic solutions} that are twice continuously differentiable in time (Theorem \ref{thm:2}).
\end{enumerate}
\noindent These results are proved in Sections \ref{sect:discExist} and \ref{sect:generalExistence} respectively. Section \ref{sect:explicit} contains the explicit calculation of two exact solutions of SG in geostrophic coordinates, as well as a brief discussion on equilibrium solutions. Finally, in Section \ref{sect:num}, we illustrate the theory developed in the paper by simulating a 2D semi-geostrophic flow in geostrophic coordinates, and we plot the corresponding Laguerre tessellations of the physical domain $\Omega$.
\subsection{Notation}

Let $d\in\N$. We denote by $\R^d_{>}$ the subset of $\R^d$ consisting of all vectors whose components are positive. For $i\in\{1,...,d\}$, the $i^{\text{th}}$ canonical basis vector in $\R^d$ is denoted by $e_i$. Let $A\subseteq \R^d$ be a Borel set. We denote the identity map on $A$ by $\mathrm{id}_A$, and the characteristic function of $A$ by $\mathds{1}_A$. We denote the interior of $A$ by $\mathrm{Int}(A)$ and the boundary of $A$ by $\partial A$.

\paragraph{Measures.}We denote by $\mathcal{L}^d$ the Lebesgue measure on $\R^d$ and by $\mathcal{L}^d\mres A$ its restriction to $A$. The set of Borel probability measures on $A$ is denoted by $\mathcal{P}(A)$. Given a Borel map $T:A\to\R^d$ and a measure $\mu\in\mathcal{P}(A)$, the pushforward of $\mu$ by $T$ is denoted by $T_\#\mu$ and is defined by $T_\#\mu(B)=\mu(T^{-1}(B))$ for all Borel sets $B\subseteq \R^d$. The set of Borel probability measures on $\R^d$ with compact support is denoted by $\mathcal{P}_c(\R^d)$. For any $p\in [1,+\infty)$, $\mathcal{P}_p(\R^d)$ denotes the set of all Borel probability measures on $\R^d$ with finite moments of order $p$, equipped with the Wasserstein $p$-distance $W_p$. This is defined for $\mu,\,\nu\in\mathcal{P}_p(\R^d)$ by
\begin{align*}
W_p(\mu,\nu):=\inf\left\{\int_{\R^d\times\R^d} |x-y|^p\,\rd\gamma(x,y)\,:\, \gamma\in\mathcal{P}(\R^d\times\R^d),\,{\pi_x}_\#\gamma=\mu\text,\,{\pi_y}_\#\gamma=\nu\right\}^{\frac{1}{p}},
\end{align*}
where $\pi_x$ and $\pi_y$ denote the projections onto the first and second variables, respectively. Throughout this paper spaces of probability measures are understood to be equipped with the Wasserstein 2-distance unless otherwise stated. 

\paragraph{Convex functions.} Given a convex function $f:A\to\R$, the subdifferential of $f$ is the set-valued function, mapping from $A$ into the set of subsets of $\R^d$, defined by
\begin{align*}
\partial f (x)=\left\{y\in\R^d \ \Big\vert\  y\cdot(z-x)\leq f(z)-f(x)\quad \forall\,z\in A \right\}.
\end{align*}
The Legendre-Fenchel transform of $f$ is the function $f^*:\R^d\to\R$ defined by
\begin{align*}
f^*(y)=\underset{x\in A}\sup\left\{x\cdot y-f(x)\right\}.
\end{align*}

\paragraph{Test functions.} We denote by $\mathscr{D}(\R^d)$ the space of test functions $C^{\infty}_c(\R^d)$ equipped with the standard semi-norm topology (see, for example, \cite{friedlander1998introduction}).

\paragraph{Physical domain.} Throughout this paper (with the exception of Section \ref{sect:num}) $\Omega$ is taken to be an arbitrary convex open bounded subset of $\R^3$, and, without loss of generality, we use the normalisation convention that $\mathcal{L}^3(\Omega)=1$ so that all measures under consideration are probability measures.

\section{Semi-discrete optimal transport}\label{sect:SDOT}
In this section we review some basic aspects of semi-discrete optimal transport theory. For further information on semi-discrete optimal transport see \cite[Section 4]{merigot2021optimal} and, for more on optimal transport theory in greater generality, see \cite{santambrogio2015optimal, villani2003topics, villani2008optimal}.

Given a \emph{target measure} $\nu\in\mathcal{P}_2(\R^3)$, a Borel map $T:\Omega\to\R^3$ is said to be an \emph{optimal transport map} between $\mathcal{L}^3\mres\Omega$ and $\nu$ with respect to the quadratic cost $c:\R^3\times\R^3\to\R$ given by
\begin{align*}
c(x,y)=|x-y|^2
\end{align*}
if it minimises the \emph{transport cost}
\begin{align*}
\int_{\Omega}|x-T(x)|^2\,\rd x
\end{align*}
subject to the constraint that
\begin{align*}
T\#\left(\mathcal{L}^3\mres\Omega\right)=\nu.
\end{align*}
The problem of finding an optimal transport map given source and target measures is known as the Monge problem. For any $\nu\in\mathcal{P}_2(\R^3)$ such a map $T$ exists, is unique, and can be expressed as the gradient of a convex function $\Phi$ belonging to the Sobolev space $H^1(\Omega)$ (see for example \cite[Theorem 1.22]{santambrogio2015optimal} or \cite[Theorem 2.12]{villani2003topics}).
\begin{defn}\label{def:BrenOper}
{\normalfont We define the operator $\mathcal{B}:\mathcal{P}_2(\R^3)\to H^1(\Omega)$ to be that which sends any given $\nu\in\mathcal{P}_2(\R^3)$ to the unique mean-zero convex function in $H^1(\Omega)$ whose gradient is the unique optimal transport map between $\mathcal{L}^3\mres\Omega$ and $\nu$ with respect to the quadratic cost.}
\end{defn}
\noindent The stability of optimal transport, including the continuity of $\mathcal{B}$, has been studied in, for example, \cite{berman2020convergence,li2020quantitative,merigot2020quantitative} and \cite[Theorem 5.20 and Corollary 5.23]{villani2008optimal}. In \cite{merigot2020quantitative} it is shown that for any bounded set $A\subset \R^3$, the restriction of $\mathcal{B}$ to $\mathcal{P}(A)$ is H\"{o}lder continuous.
\begin{thm}[{\normalfont c.f. \cite[Theorem 3.1]{merigot2020quantitative}}]\label{thm:quantStab}
Let $A\subset \R^3$ be bounded. There exists a constant $C>0$, which depends only on $A$ and $\Omega$, such that for all $\mu,\ \nu\in\mathcal{P}(A)$,
\begin{align*}
\|\nabla\mathcal{B}[\mu]-\nabla\mathcal{B}[\nu]\|_{L^2(\Omega;\R^3)}
\leqslant
CW_2(\mu,\nu)^{\frac{2}{15}}.
\end{align*}
\end{thm}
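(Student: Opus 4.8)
The plan is to recognise this as the quantitative stability of optimal transport maps for a fixed source measure, and to deduce it from \cite[Theorem 3.1]{merigot2019quantitative} by specialisation. By Definition~\ref{def:BrenOper}, $\nabla\mathcal{B}[\mu]$ is exactly the optimal transport map $T_\mu$ from the fixed source $\mathcal{L}^3\mres\Omega$ to the target $\mu$, so the left-hand side is the $L^2(\mathcal{L}^3\mres\Omega)$-distance between the two maps $T_\mu,T_\nu$ whose common source is uniform on $\Omega$ and whose targets $\mu,\nu$ both lie in the fixed bounded set $A$. First I would check that this configuration meets the hypotheses of the cited theorem: the source density $\mathds{1}_\Omega$ is bounded above on the bounded convex set $\Omega$, and the targets are confined to $A$. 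This is precisely the regime (fixed source of bounded density, targets in a fixed compact set) in which the estimate holds with a constant that is independent of the particular pair $\mu,\nu$ and depends only on $A$ and $\Omega$.

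For completeness I would then recall the mechanism underlying such an estimate, which rests on two ingredients. The first is a purely convex-analytic lemma: for $L$-Lipschitz convex functions $\phi,\psi$ on $\Omega$ one has $\|\nabla\phi-\nabla\psi\|_{L^2(\Omega)}\lesssim\|\phi-\psi\|_{L^\infty(\Omega)}^{1/3}$. This follows by noting that for $x$ at distance at least $r$ from $\partial\Omega$ the one-sided secant bounds coming from monotonicity of the subdifferential give, for each unit vector $v$, a pointwise estimate of the form $(\nabla\phi-\nabla\psi)(x)\cdot v\leq 2\|\phi-\psi\|_{L^\infty}/r+\delta_{r,v}\psi(x)/r$, where $\delta_{r,v}\psi\geq 0$ is a second difference of $\psi$ whose integral over $\Omega$ is controlled by $L$ and the perimeter of $\Omega$; on the $r$-collar of $\partial\Omega$ one retains only the trivial bound $2L$, and balancing the two contributions by taking $r\sim\|\phi-\psi\|_{L^\infty}^{2/3}$ produces the exponent $1/3$. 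The second ingredient is a quantitative bound $\|\mathcal{B}[\mu]-\mathcal{B}[\nu]\|_{L^\infty(\Omega)}\lesssim W_2(\mu,\nu)^{\beta}$; since the targets lie in $A$ the maps, potentials and their conjugates are uniformly Lipschitz, and the dual formulation of optimal transport yields such a bound with a dimension-dependent exponent $\beta$.

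Composing the two ingredients gives $\|\nabla\mathcal{B}[\mu]-\nabla\mathcal{B}[\nu]\|_{L^2(\Omega)}\lesssim W_2(\mu,\nu)^{\beta/3}$, and the value $\beta=2/5$ contributed by the potential-stability step in dimension three returns the asserted exponent $2/15$. The hard part is not conceptual but quantitative: one must carefully track the interplay between the boundary-layer width $r$, the interior gradient estimate, and the (non-optimal) rate $\beta$, and keep every constant depending only on $A$ (through its diameter and the uniform Lipschitz bounds on the targets) and on $\Omega$ (through its diameter, perimeter, and the bound on the source density), and on nothing else. With this bookkeeping the statement reduces to \cite[Theorem 3.1]{merigot2019quantitative} specialised to $d=3$.
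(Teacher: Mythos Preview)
Your proposal is correct and in fact goes beyond what the paper does: the paper simply \emph{cites} this result as \cite[Theorem 3.1]{merigot2019quantitative} without giving any proof, treating it as an imported black box. Your recognition that $\nabla\mathcal{B}[\mu]$ is the Brenier map from the fixed uniform source $\mathcal{L}^3\mres\Omega$ to the target $\mu$, and that the hypotheses of the cited theorem are met (bounded convex source domain with bounded density, targets confined to a fixed bounded set), is exactly the intended reduction; your additional sketch of the two-ingredient mechanism (the $L^\infty\to L^2$ gradient estimate for Lipschitz convex functions via boundary-layer balancing, composed with a H\"older bound on the potentials in $W_2$) is a faithful outline of the argument in \cite{merigot2019quantitative} and is not reproduced in the present paper.
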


Semi-discrete optimal transport (see, for example, \cite{kitagawa2016convergence}, \cite[Section 4]{merigot2021optimal}, \cite[Section 6.4.2]{santambrogio2015optimal}) refers to the special case where, for some $N\in\N$, the target measure $\nu$ belongs to the class
\begin{align}\label{eqn:Q}
\mathcal{Q}^N\left(\R^3\right):=\left\{
\nu=\sum_{i=1}^Nm_i\delta_{z_i}\ \Big\vert\ \z\in D,\ \mathbf{m}=(m_1,...,m_N)\in\R^N_{>}\ \text{and}\ \sum_{i=1}^Nm_i=1\ \right\},
\end{align}
where
\begin{align}\label{eqn:D}
D:=\left\{\z=(z_1,...,z_N)\in\R^{3N}\ \big\vert\ z_i\neq z_j\text{ whenever } i\neq j\right\}.
\end{align}

\begin{defn}
{\normalfont We call a vector $\mathbf{m}=(m_1,...,m_N)\in \R^N_>$ such that $\sum_{i=1}^Nm_i=1$  a \emph{mass vector}, and a vector $\z\in D$ a \emph{seed vector}. A measure $\nu\in \mathcal{Q}^N\left(\R^3\right)$ given by
\begin{align*}
\nu=\sum_{i=1}^Nm_i\delta_{z_i}
\end{align*}
is said to have mass vector $\mathbf{m}=(m_1,...,m_N)$ and seed vector $\z=(z_1,...,z_N)$.}
\end{defn}

\noindent Let $\nu\in \mathcal{Q}^N\left(\R^3\right)$ have mass vector $\mathbf{m}=(m_1,...,m_N)$ and seed vector $\z=(z_1,...,z_N)$. A map $T:\Omega\to\R^3$ satisfies the pushforward constraint
\begin{align*}
T\#\left(\mathcal{L}^3\mres\Omega\right)=\nu
\end{align*}
if and only if it has the form
\begin{align*}
T=\sum_{i=1}^Nz_i\mathds{1}_{C_i},
\end{align*}
where $\{C_i\}_{i=1}^N$ is a tesselation of $\Omega$ by measurable sets $C_i$ such that
\begin{align*}
\mathcal{L}^3(C_i)=m_i\qquad \forall\, i\in\{1,...,N\}.
\end{align*}
Hence the optimal transport problem between $\mathcal{L}^3\mres\Omega$ and $\nu\in \mathcal{Q}^N(\R^3)$ is reduced to an optimal partitioning problem. Moreover, the unique optimal partition, and corresponding transport map, can be characterised using the notion of \emph{Laguerre tessellations}.
\begin{defn}[Laguerre tessellation]\label{def:LaguerreTess}
{\normalfont Given a \emph{seed vector} $\z=(z_1,...,z_N)\in\R^{3N}$ and a \emph{weight vector} $\w=(w_1,...,w_n)\in\R^N$, the \emph{Laguerre tessellation} of $\Omega$ generated by the pair $(\zw)$ is defined to be the family
\begin{align*}
\{C_i(\zw)\}_{i=1}^N,
\end{align*}
where $C_i(\zw)$ are \emph{Laguerre cells} defined by
\begin{align}\label{eqn:lagcell}
C_i(\zw)=\left\{x\in \Omega\ :\ |x-z_i|^2-w_i\leqslant |x-z_j|^2-w_j\quad \forall\, j\in\{1,...,N\}\right\}.
\end{align}}
\end{defn}
\noindent Note that any Laguerre cell is convex since it is the intersection of finitely many half-spaces with the convex set $\Omega$. In particular, Laguerre cells that do not intersect $\partial\Omega$ are polyhedra. Moreover, for any seed vector $\z$, weight vector $\w$ and indices $i\neq j$, the intersection $C_i(\zw)\cap C_j(\zw)$ is contained in the $2$-dimensional plane
\begin{align*}
\left\{x\in \R^3\ :\ |x-z_i|^2-w_i=|x-z_j|^2-w_j\right\}.
\end{align*}

Using the Kantorovich Duality Theorem (see, for example, \cite[Section 1.2]{santambrogio2015optimal}), one can show that the optimal transport cost between $\mathcal{L}^3\mres\Omega$ and $\nu$ is the supremum over all weight vectors $\w=(w_1,...,w_N)\in\R^N$ of the Kantorovich functional $g:\R^N\to\R$ defined by
\begin{align}\label{eq:g}
g(\w)=\sum_{i=1}^N\underset{C_i(\zw)}\int|x-z_i|^2\,\rd x+\sum_{i=1}^N\big(m_i-\mathcal{L}^3\left(C_i(\zw)\right)\big) w_i.
\end{align}
The Kantorovich functional $g$ is concave, and maximisers $\w\in\R^N$ of $g$ exist and satisfy
\begin{align}\label{eqn:OWVchar}
\mathcal{L}^3(C_i(\zw))=m_i\qquad\forall\, i\in\{1,...,N\}.
\end{align}
(See, for example, \cite[Theorem 40]{merigot2021optimal}.)
We call such $\w$ \emph{optimal weight vectors}. Note that $w_i=\psi(z_i)$ where $\psi$ is an optimal Kantorovich potential. Given an optimal weight vector $\w\in\R^N$, the unique optimal transport map from $\mathcal{L}^3\mres\Omega$ to $\nu$ is given by
\begin{align*}
T=\sum_{i=1}^Nz_i\mathds{1}_{C_i(\zw)}.
\end{align*}
In particular, if we define the function $\Phi:\Omega\to\R$ by
\begin{align}\label{eqn:Phi}
\Phi(x)=\frac{1}{2}|x|^2-\frac{1}{2}\underset{i}\min\{|x-z_i|^2-w_i\},
\end{align}
then $T=\nabla \Phi$ and, using the notation introduced in Definition \ref{def:BrenOper},
\begin{align}\label{eqn:B}
\mathcal{B}[\nu]=\Phi-\int_{\Omega}\Phi(x)\,\rd x.
\end{align}
This follows from the Gangbo-McCann Theorem (see, for example, \cite[Theorem 1.17]{santambrogio2015optimal}).

The Monge problem is a non-convex optimisation problem. As outlined above, in the case of semi-discrete optimal transport this can be replaced by an unconstrained, finite dimensional optimisation problem (maximising $g$), which is numerically tractable. As we demonstrate in Section \ref{sect:num}, this is one motivation for using semi-discrete optimal transport to construct solutions of SG in geostrophic coordinates.

\subsection{Optimal weight map}\label{sect:weight}
Optimal weight vectors are not unique. Indeed, let $\nu\in\mathcal{Q}^N(\R^3)$ have mass vector $\mathbf{m}$ and seed vector $\z$, and let $\mathbf{e}=(1,...,1)\in\R^N$. Using \eqref{eqn:lagcell} it is easy to see that for any $\lambda\in \R$, 
\begin{align*}
C_i(\zw)=C_i(\z,\w+\lambda\mathbf{e})\qquad \forall\, i\in\{1,...,N\}.
\end{align*}
Hence, by the characterisation of optimal weight vectors \eqref{eqn:OWVchar}, $\w\in\R^N$ is optimal if and only if every vector in $\w+\mathrm{span}\{\mathbf{e}\}$ is optimal. Conversely, if $\w,\,\widetilde{\w}\in\R^N$ and $\w-\widetilde{\w}\notin \mathrm{span}\{\mathbf{e}\}$, then the pairs $(\z,\w)$ and $(\z,\widetilde{\w})$ define distinct Laguerre tessellations, so at least one of $\w$ and $\widetilde{\w}$ is not optimal. In particular, it is easy to deduce that there is a unique optimal weight vector whose $N^{\text{th}}$ component is zero. This leads to the following definition, where $e_i$ denotes the $i^{\text{th}}$ canonical basis vector in $\R^N$.
\begin{defn}\label{def:OWM}
{\normalfont Given a fixed mass vector $\mathbf{m}\in\R^N$, we define the \emph{optimal weight map} $\mathbf{w}_*:D\to \R^N$ to be that which sends each seed vector $\mathbf{z}\in D$ to the unique optimal weight vector $\w_*(\z)\in \mathrm{span}\{e_1,...,e_{N-1}\}$. We refer to the family $\{C_i(\mathbf{z},\mathbf{w}_*(\mathbf{z}))\}_{i=1}^N$ as the \emph{optimal Laguerre tessellation} of $\Omega$ generated by $\mathbf{z}\in\R^N$.}
\end{defn}
\noindent Note that there are many possible definitions of the optimal weight map, which all yield the same definition of an optimal Laguerre tessellation. For instance, a natural choice of range space would be $\mathrm{span}\{\mathbf{e}\}^{\perp}$. We choose the range space $\mathrm{span}\{e_1,...,e_{N-1}\}$ so that subsequent arguments concerning the regularity of $\w_*$ can be carried out using only the canonical basis of Euclidean space.

\section{Statement of existence theorems}\label{sect:mainResults}

After stating some preliminary definitions, we state the two existence results (Theorem \ref{thm:1} and Theorem \ref{thm:2}) for which we give novel proofs. See Definition \ref{def:BrenOper} and equation \eqref{eqn:Q} for the definitions of the operator $\mathcal{B}$ and the space $\mathcal{Q}^N(\R^3)$, respectively.

\begin{defn}[Geostrophic energy]
{\normalfont The \emph{geostrophic energy} functional $E:\mathcal{P}_c(\R^3)\to \R$ is defined by
\begin{align*}
E[\nu]:=\int_{\Omega}\left(\frac{1}{2}\left((\partial_1\mathcal{B}[\nu](x)-x_1)^2+(\partial_2\mathcal{B}[\nu](x)-x_2)^2\right)-x_3\partial_3\mathcal{B}[\nu](x)\right) \,\rd x.
\end{align*}}
\end{defn}

\noindent Note that the geostrophic energy is traditionally written in terms of the geostrophic velocity field $u_{g}=(u_{g, 1}, u_{g, 2}, 0)^{T}$ and the potential temperature $\theta$ as
\begin{align*}
E(u_g,\theta)=\int_{\Omega}\left(\frac{1}{2}\left(u^2_{g,1}+u^2_{g,2}\right)-x_3\theta\right)\,\rd x.
\end{align*}

\begin{rem}\label{rem:energy}
{\normalfont The geostrophic energy functional is continuous on $\mathcal{P}(K)$ for any compact set $K\subset \R^3$. To see this first note that, for any $\nu\in\mathcal{P}_c(\R^3)$,
\begin{align}\label{eqn:energy}
E[\nu]=\frac{1}{2}W^2_2(\mathcal{L}^3\mres\Omega,\nu)
-\frac{1}{2}\|\partial_3\mathcal{B}[\nu]\|_{L^2(\Omega)}^2
-\frac{1}{2}\int_{\Omega}x^2_3\,\rd x.
\end{align}
Suppose that $(\nu^N)_{N\in\N}\subset\mathcal{P}(K)$ is a sequence which converges to a measure $\nu\in\mathcal{P}(K)$ as $N\to\infty$. Since $W_2$ is a metric on $\mathcal{P}(K)$,
\begin{align*}
\lim_{N\to\infty}W_2\left(\mathcal{L}^3\mres\Omega,\nu^N\right)=W_2\left(\mathcal{L}^3\mres\Omega,\nu\right)
\end{align*}
and, by continuity of $\mathcal{B}$ on $\mathcal{P}(K)$ (Theorem \ref{thm:quantStab}),
\begin{align*}
\lim_{N\to\infty}\|\partial_3\mathcal{B}[\nu^N]\|_{L^2(\Omega)}
=\|\partial_3\mathcal{B}[\nu]\|_{L^2(\Omega)}.
\end{align*}
Hence
\begin{align*}
\lim_{N\to\infty}E[\nu^N]=E[\nu].
\end{align*}}
\end{rem}

\begin{defn}[Geostrophic solution]\label{defn:SGG}
{\normalfont Let $T\in (0,\infty)$ and let $\overline{\alpha}\in\mathcal{P}_c(\R^3)$. We say that $\alpha\in C([0,T];\mathcal{P}_c(\R^3))$ is a weak solution of the 3D incompressible semi-geostrophic equations in geostrophic coordinates on $[0,T]$ with initial measure $\overline{\alpha}$ if
\begin{align}
\nonumber&\int_0^{T}\int_{\R^3}\left(\partial_t \varphi(z,t)+Jz\cdot\nabla \varphi(z,t)\right)\,\rd\alpha_t(z)\,\rd t-\int_0^{T}\int_{\Omega}J x\cdot\nabla \varphi\big(\nabla \mathcal{B}[\alpha_t](x),t\big)\,\rd x\,\rd t\\
\label{eqn:SGGdist}&\quad\qquad\qquad\qquad\qquad\qquad\qquad\qquad\qquad=\int_{\R^3}\varphi(z,T)\,\rd\alpha_{T}(z)-\int_{\R^3}\varphi(z,0)\,\rd\overline{\alpha}(z),
\end{align}
for all $\varphi\in \mathscr{D}(\R^3\times \R)$, where the matrix  $J$ is defined by \eqref{eqn:J}.
In what follows, we will refer to such a map $\alpha$ as a \emph{geostrophic solution}. In particular, if there exists $N\in\N$, a ($k$-times continuously differentiable) map $\z=(z_1,...,z_N):[0,T]\to\R^{3N}$ and a mass vector $\mathbf{m}\in\R^N$ such that
\begin{align}\label{eqn:ansatz}
\alpha_t=\sum_{i=1}^Nm_i\delta_{z_i(t)}\in \mathcal{Q}^N(\R^3)
\end{align}
for all $t\in[0,T]$, we will refer to $\alpha$ as a ($k$-times continuously differentiable) \emph{discrete geostrophic solution}. If the corresponding geostrophic energy $E[\alpha_t]$ is constant in time, then we say that $\alpha$ is \emph{energy-conserving}.}
\end{defn}

\noindent The defintion of a \emph{geostrophic solution} of SG coincides with Loeper's definition of a \emph{weak measure solution} \cite[Definition 2.2]{loeper2006fully}. Moreover, if $\alpha$ is a geostrophic solution and $\alpha_t\ll \mathcal{L}^3$ for all $t\in[0,T]$ then, since 
\begin{align*}
\nabla \mathcal{B}[\alpha_t]^*\#\alpha_t=\mathcal{L}^3\mres\Omega,
\end{align*}
by a change of variables,
\begin{align*}
\int_{\Omega}Jx\cdot\nabla \varphi\big(\nabla \mathcal{B}[\alpha_t](x),t\big)\,\rd x=\int_{\R^3}J\nabla \mathcal{B}[\alpha_t]^*(z)\cdot\nabla \varphi(z,t)\, \rd \alpha_t(z)\qquad \forall\, \varphi\in \mathscr{D}(\R^3\times\R)
\end{align*}
for all $t\in[0,T]$. Hence $\alpha$ is a distributional solution of the active transport equation
\begin{align}\label{eqn:GeoActTra}
\partial_t\alpha+J\big(\text{id}_{\R^3}-\nabla\mathcal{B}[\alpha]^*\big)\cdot\nabla \alpha=0.
\end{align}
This is the setting considered in \cite{benamou1998weak}.

\begin{defn}[Well-preparedness]\label{defn:wellPrep}
{\normalfont Let $N\in\N$. We say that a discrete probability measure $\beta\in \mathcal{Q}^N(\R^3)$ given by
\begin{align*}
\beta=\sum_{i=1}^Nm_i\delta_{z_i}
\end{align*}
is \emph{well-prepared} for SG in geostrophic coordinates if there exists $r>0$ such that
\begin{align}\label{eqn:wellPrep1}
|(z_i-z_j)\cdot e_3|\geqslant r\quad \forall\, i\neq j.
\end{align}
In other words, $\beta$ is well prepared if the seeds $z_i$ lie in distinct horizontal planes.}
\end{defn}

Well-preparedness of the initial data ensures that the seeds do not collide and therefore do not enter the set on which the right-hand side of the ODE in \eqref{eqn:ODEIVPintro} is discontinuous. This allows us to prove the following theorem.

\begin{thm}[{\normalfont Existence of discrete geostrophic solutions, c.f. \cite[Proposition 4.14]{feldman2013lagrangian}}]\label{thm:1}
Let $\Omega\subset \R^3$ be open, bounded and convex, and fix $N\in\N$, $N\geq 2$. For any $T\in (0,\infty)$ and any well-prepared discrete probability measure $\overline{\alpha}\in \mathcal{Q}^N(\R^3)$, there exists a unique twice continuously differentiable discrete geostrophic solution $\alpha\in C([0,T];\mathcal{P}_c(\R^3))$ with initial measure $\overline{\alpha}$. Moreover, this solution is energy-conserving.
\end{thm}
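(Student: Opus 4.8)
The plan is to reduce the infinite-dimensional problem to the finite-dimensional ODE-IVP \eqref{eqn:ODEIVPintro} and to solve the latter by the Cauchy--Lipschitz theorem, exploiting the algebraic structure of $J$ and the preservation of well-preparedness to obtain a \emph{global} solution. By the characterisation of discrete geostrophic solutions (Lemma \ref{lem:odeChar}), the ansatz \eqref{eqn:ansatz} with mass vector $\mathbf{m}$ defines a continuously differentiable discrete geostrophic solution on $[0,T]$ if and only if the seed trajectory $\mathbf{z}\colon[0,T]\to D$ is a $C^1$ solution of \eqref{eqn:ODEIVPintro}, where $x_i(\mathbf{z})$ denotes the centroid of the optimal Laguerre cell $C_i(\mathbf{z},\mathbf{w}_*(\mathbf{z}))$ generated by $\mathbf{z}$ (Definition \ref{def:OWM}). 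It therefore suffices to show that \eqref{eqn:ODEIVPintro} admits a unique $C^1$ solution on all of $[0,T]$ which remains in $D$, and then to verify energy conservation.

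First I would establish that the vector field $F\colon D\to\R^{3N}$ defined by $F(\mathbf{z})=\big(J(z_i-x_i(\mathbf{z}))\big)_{i=1}^N$ is of class $C^1$, so that the Cauchy--Lipschitz theorem provides a unique maximal $C^1$ solution. The only non-elementary ingredient is the regularity of the centroid map $\mathbf{z}\mapsto x_i(\mathbf{z})$, which reduces to the regularity of the optimal weight map $\mathbf{w}_*$. I would obtain the latter from the implicit function theorem applied to the optimality system $\mathcal{L}^3\big(C_i(\mathbf{z},\mathbf{w})\big)=m_i$ for $i\in\{1,\dots,N-1\}$: the relevant Jacobian is the Hessian of the concave Kantorovich functional $g$ from \eqref{eq:g}, restricted to $\mathrm{span}\{e_1,\dots,e_{N-1}\}$, which is negative definite for $\mathbf{z}\in D$ and $\mathbf{m}\in\R^N_>$ by the standard non-degeneracy of the semi-discrete transport problem. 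Since the Laguerre cells, and hence the centroids $x_i(\mathbf{z})=m_i^{-1}\int_{C_i}x\,\mathrm{d}x$, depend smoothly on the pair $(\mathbf{z},\mathbf{w})$, composing with $\mathbf{w}_*\in C^1(D)$ yields $F\in C^1(D)$. I expect this regularity step to be the main obstacle, since it is where the non-degeneracy of the optimal transport problem and the smooth dependence of Laguerre cells on the seeds must be invoked.

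Next I would upgrade the local solution to a global one on $[0,T]$ using two structural features of \eqref{eqn:ODEIVPintro}. Because $J^{\mathrm{T}}e_3=0$, each third component satisfies $\tfrac{\mathrm{d}}{\mathrm{d}t}(z_i\cdot e_3)=(J^{\mathrm{T}}e_3)\cdot(z_i-x_i)=0$, so $z_i(t)\cdot e_3=\overline{z}_i\cdot e_3$ is conserved; the well-preparedness inequality \eqref{eqn:wellPrep1} therefore persists with the \emph{same} constant $r$, keeping the seeds in distinct horizontal planes, hence $\mathbf{z}(t)\in D$ with a uniform lower bound on pairwise separation. For boundedness of the horizontal components $\zeta_i=(z_{i,1},z_{i,2})$, I would use that the centroids lie in the bounded set $\overline{\Omega}\subset B_R$ and that $J$ restricts to an antisymmetric rotation of the horizontal plane, so that $\tfrac{\mathrm{d}}{\mathrm{d}t}\tfrac{1}{2}|\zeta_i|^2=-\zeta_i\cdot J\xi_i\leqslant R|\zeta_i|$, where $\xi_i$ is the horizontal centroid; this gives $|\zeta_i(t)|\leqslant|\zeta_i(0)|+Rt$. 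Together these confine $\mathbf{z}(t)$ to a fixed compact subset of $D$ for all $t$, so by the escape-from-compacts criterion the maximal solution extends to all of $[0,T]$; uniqueness is inherited directly from Cauchy--Lipschitz.

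Finally, for energy conservation I would differentiate $t\mapsto E[\alpha_t]$ along the flow. Writing $\nabla\mathcal{B}[\alpha_t]=\sum_i z_i\mathds{1}_{C_i}$ and using the identity \eqref{eqn:energy}, the seed-gradient of the energy is $\nabla_{z_i}E=m_i\big(z_i-x_i(\mathbf{z})-(z_i\cdot e_3)e_3\big)$, where the contribution $\nabla_{z_i}\big(\tfrac{1}{2}W_2^2(\mathcal{L}^3\mres\Omega,\alpha_t)\big)=m_i(z_i-x_i)$ is the standard gradient formula for the semi-discrete transport cost (the cell-boundary contributions cancelling by optimality of $\mathbf{w}_*$, an envelope argument), and the $\partial_3$-term contributes $-m_i(z_i\cdot e_3)e_3$ since $\|\partial_3\mathcal{B}[\alpha_t]\|_{L^2(\Omega)}^2=\sum_j m_j(z_j\cdot e_3)^2$ by the mass constraint. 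Then
\begin{align*}
\frac{\mathrm{d}}{\mathrm{d}t}E[\alpha_t]=\sum_{i=1}^N\nabla_{z_i}E\cdot\dot{z}_i=\sum_{i=1}^N m_i\big(z_i-x_i-(z_i\cdot e_3)e_3\big)\cdot J(z_i-x_i)=0,
\end{align*}
because $(z_i-x_i)\cdot J(z_i-x_i)=0$ by antisymmetry of $J$ and $e_3\cdot J(z_i-x_i)=(J^{\mathrm{T}}e_3)\cdot(z_i-x_i)=0$. This shows the solution is energy-conserving and completes the plan.
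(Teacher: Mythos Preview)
Your proposal is correct and follows essentially the same approach as the paper: reduce to the ODE via Lemma \ref{lem:odeChar}, establish $C^1$ regularity of the optimal centroid map by applying the implicit function theorem to the mass constraints (the paper's Lemma \ref{lem:diffOWM}), use conservation of $z_i\cdot e_3$ together with the linear-growth a priori bound to pass from local to global existence, and obtain energy conservation from the antisymmetry of $J$. The only cosmetic differences are that you invoke the escape-from-compacts criterion where the paper iterates Picard--Lindel\"of on intervals of uniform length, and you compute $\tfrac{\mathrm{d}}{\mathrm{d}t}E$ via the envelope formula $\nabla_{z_i}\tfrac12 W_2^2=m_i(z_i-x_i)$ where the paper differentiates under the integral sign directly.
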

\noindent In Example \ref{exa:singleMass}, we show that Theorem \ref{thm:1} extends easily to the case $N=1$ by deriving an explicit expression for the solution.

The uniqueness and regularity of solutions attained in Theorem \ref{thm:1} are significant in the context of numerical analysis since these factors determine the convergence of the corresponding numerical method. As mentioned in the introduction, the result of Theorem \ref{thm:1} is analogous to \cite[Proposition 4.14]{feldman2013lagrangian}, but with the following differences. In \cite[Proposition 4.14]{feldman2013lagrangian}, the physical domain $\Omega$ need not be convex, and the initial measure can be any convex combination of Dirac masses. However, with these slightly weaker hypotheses, the seed trajectories $\mathbf{z}$ are only known to be Lipschitz in time and are not known to be unique.

\begin{thm}[Existence of geostrophic solutions]\label{thm:2}
Let $\Omega\subset \R^3$ be open, bounded and convex. For any $T\in (0,\infty)$ and any $\overline{\alpha}\in\mathcal{P}_c(\R^3)$, there exists an energy-conserving geostrophic solution $\alpha\in C^{0,1}([0,T];\mathcal{P}_c(\R^3))$ with initial measure $\overline{\alpha}$ and a sequence $(\alpha^N)_{N\in\N}$ of twice continuously differentiable discrete geostrophic solutions which converges uniformly in $C([0,T];\mathcal{P}_c(\R^3))$ to $\alpha$:
\begin{align*}
\lim_{N\to\infty}\sup_{t\in[0,T]}W_2(\alpha^N_t,\alpha_t)=0.
\end{align*}
\end{thm}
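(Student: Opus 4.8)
The plan is to prove Theorem~\ref{thm:2} by a discretisation-and-compactness argument: approximate $\overline{\alpha}$ by well-prepared discrete measures, evolve each by Theorem~\ref{thm:1}, extract a uniform limit, and verify that the limit is a geostrophic solution. First I would construct a sequence $(\overline{\alpha}^N)_{N\in\N}$ of well-prepared discrete measures, with $\overline{\alpha}^N\in\mathcal{Q}^N(\R^3)$, converging to $\overline{\alpha}$ in $W_2$. Since $\overline{\alpha}\in\mathcal{P}_c(\R^3)$, a standard density argument (partitioning $\mathrm{supp}(\overline{\alpha})$ into finitely many small cells and placing an atom of the appropriate mass at a representative point of each) produces discrete approximants; an arbitrarily small perturbation of the third coordinates of the atoms then places them in distinct horizontal planes, enforcing the well-preparedness condition \eqref{eqn:wellPrep1} (and in particular making the atoms distinct) while keeping the $W_2$-error as small as desired. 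Applying Theorem~\ref{thm:1} to each $\overline{\alpha}^N$ yields a unique, energy-conserving, continuously differentiable discrete geostrophic solution $\alpha^N\in C([0,T];\mathcal{P}_c(\R^3))$ with $\alpha^N_0=\overline{\alpha}^N$.

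Next I would establish uniform bounds that make the family $(\alpha^N)_N$ precompact. Writing $\alpha^N_t=\sum_i m_i^N\delta_{z_i^N(t)}$, the trajectories satisfy the ODE-IVP \eqref{eqn:ODEIVPintro}, that is, $\dot z_i^N=J(z_i^N-x_i(\z^N))$ with centroids $x_i(\z^N)\in\Omega$. Since the third row of $J$ vanishes, the third coordinate of each seed is conserved; and since $J$ restricted to the horizontal plane is a rotation generator, a Gr\"onwall-type estimate on $\tfrac{d}{dt}|z_{i,h}^N|^2$ using antisymmetry of $J$ and the boundedness of $\Omega$ shows the horizontal components grow at most linearly, at a rate depending only on $\mathrm{diam}(\Omega)$. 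As the supports $\mathrm{supp}(\overline{\alpha}^N)$ lie in a fixed compact set by construction, these estimates confine all $\mathrm{supp}(\alpha^N_t)$ to a single compact set $K\subset\R^3$, uniformly in $t\in[0,T]$ and $N$. The same velocity bound $|\dot z_i^N|\leqslant C$ gives, via the diagonal coupling, $W_2(\alpha^N_t,\alpha^N_s)\leqslant C|t-s|$, so the maps $t\mapsto\alpha^N_t$ are uniformly Lipschitz. Arzel\`a--Ascoli for maps into the compact metric space $(\mathcal{P}(K),W_2)$ then yields a subsequence (which I relabel) converging uniformly to some $\alpha\in C([0,T];\mathcal{P}(K))$, with $\alpha_0=\overline{\alpha}$.

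Finally I would pass to the limit in the weak formulation \eqref{eqn:SGGdist}, which each $\alpha^N$ satisfies by Lemma~\ref{lem:odeChar}. The transport term $\int(\partial_t\varphi+Jz\cdot\nabla\varphi)\,\rd\alpha^N_t$ and the endpoint terms converge by weak convergence of measures (implied by $W_2$-convergence), using the uniform support bound and dominated convergence in $t$. The main obstacle is the nonlocal term $\int_\Omega Jx\cdot\nabla\varphi(\nabla\mathcal{B}[\alpha^N_t](x),t)\,\rd x$, which is nonlinear in the Brenier potential; its convergence is precisely the delicate point and is the reason the whole scheme is viable. Here I would invoke the quantitative stability of optimal transport, Theorem~\ref{thm:quantStab}: since all measures lie in $\mathcal{P}(K)$, one has $\|\nabla\mathcal{B}[\alpha^N_t]-\nabla\mathcal{B}[\alpha_t]\|_{L^2(\Omega)}\leqslant CW_2(\alpha^N_t,\alpha_t)^{2/15}$. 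Combining the Lipschitz continuity of $\nabla\varphi$, Cauchy--Schwarz in $x$, and the uniform convergence $\sup_t W_2(\alpha^N_t,\alpha_t)\to 0$ forces this term to converge. Passing to the limit establishes that $\alpha$ satisfies \eqref{eqn:SGGdist}, so $\alpha$ is a geostrophic solution with initial measure $\overline{\alpha}$. Energy conservation of $\alpha$ then follows from the continuity of $E$ on $\mathcal{P}(K)$ (Remark~\ref{rem:energy}): for each $t$, $E[\alpha_t]=\lim_N E[\alpha^N_t]=\lim_N E[\overline{\alpha}^N]=E[\overline{\alpha}]$, where the middle equality is the energy conservation of the discrete solutions asserted in Theorem~\ref{thm:1}.
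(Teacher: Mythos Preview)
Your proposal is correct and follows essentially the same route as the paper: approximate $\overline{\alpha}$ by well-prepared discrete measures (Lemma~\ref{lem:construction}), evolve them via Theorem~\ref{thm:1}, use the a~priori bounds from Lemma~\ref{lem:ape} together with Arzel\`a--Ascoli to extract a uniformly convergent subsequence (Lemma~\ref{lem:compactness}), pass to the limit in \eqref{eqn:SGGdist} using Theorem~\ref{thm:quantStab} for the nonlocal term, and conclude energy conservation from Remark~\ref{rem:energy}. The only cosmetic differences are that the paper works with $W_1$ (via its dual formulation) rather than the diagonal coupling for the equicontinuity estimate, and restricts to tensor-product test functions by density when passing to the limit.
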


\noindent The existence of geostrophic solutions with arbitrary compactly supported initial measure was first proved by Loeper \cite[Theorem 2.3]{loeper2006fully} and later by Feldman and Tudorascu in the context of \emph{weak Lagrangian} solutions (see \cite[Theorem 3.2]{feldman2015semi}). Being uniform in time, as opposed to pointwise in time, the convergence obtained in Theorem \ref{thm:2} is stronger than that obtained in these previous works. Since we work only with compactly supported measures, the spatial convergence obtained in Theorem \ref{thm:2} is equivalent to that obtained in \cite{feldman2015semi}. As in \cite{feldman2015semi}, the limit point obtained in Theorem \ref{thm:1} is Lipschitz in time. Note that the corresponding theorems in \cite{feldman2015semi} and \cite{loeper2006fully} do not include the hypothesis that $\Omega$ is convex. We include this hypothesis for technical reasons relating to the regularity of the \emph{optimal centroid map} (see Definition \ref{def:centroid} and Remark \ref{rem:nonconvex}).

\begin{rem}[Conservation of transport cost]\label{rem:ConsTransCost}
{\normalfont  From the proof of Lemma \ref{lem:discEnergyCons} it is easy to deduce that each discrete solution $\alpha^N$ conserves not only the geostrophic energy but also the transport cost $W_2(\alpha^N_t,\mathcal{L}^3\mres\Omega)$. Therefore any solution $\alpha$ constructed as the uniform limit of discrete solutions also conserves the transport cost $W_2(\alpha_t,\mathcal{L}^3\mres\Omega)$ by continuity of the Wasserstein distance. This is essentially due to the Hamiltonian structure of equation \eqref{eqn:SGgeoActTrans} (see \cite[Example 8.1(c)]{ambrosio2008hamiltonian}) and can be seen as a special case of \cite[Theorem 5.2]{ambrosio2008hamiltonian}. Conservation of the transport cost for weak Lagrangian solutions is proved in \cite[Corollary 5.2]{feldman2013lagrangian}. For clarity, we include an independent proof based on our semi-discrete solution procedure.}
\end{rem}

\section{Proof of Theorem \ref{thm:1}}\label{sect:discExist}

\noindent Fix $N\in\N, N\geqslant 2$. (The case $N=1$ is discussed in Example \ref{exa:singleMass}.) To construct a twice continuously differentiable discrete geostrophic solution with well-prepared initial measure $\overline{\alpha}\in\mathcal{Q}^N(\R^3)$, we substitute the expression \eqref{eqn:ansatz} into the transport equation \eqref{eqn:SGGdist} and, by appropriate choice of test functions, derive an ODE-IVP for the paths $\mathbf{z}=(z_1,...,z_N)$. We then prove in Proposition \ref{prop:ODEex} that this ODE-IVP has a unique $C^2$-solution. Conversely, we show that any $C^1$-solution of the ODE-IVP gives rise to an energy-conserving discrete geostrophic solution via the formula \eqref{eqn:ansatz}, from which Theorem \ref{thm:1} follows. The most involved part of the proof of Theorem \ref{thm:1} is the proof of the regularity of the \emph{optimal centroid map}, which we now define.

\begin{defn}\label{def:centroid}
{\normalfont Define the set
\begin{align*}
\widetilde{D}:=\left\{(\z,\w)\in \R^{3N}\times\R^N\ \big\vert\ \z\in D,\ \mathcal{L}^3(C_i(\z,\w))>0\ \forall\,i\in\{1,...,N\}\right\},
\end{align*}
where $D$ is the set of seed vectors defined by \eqref{eqn:D}. $\widetilde{D}$ is the set of generators of Laguerre tessellations of $\Omega$ with no empty cells. We define the \emph{centroid map} $\overline{\mathbf{x}}:\widetilde{D}\to \Omega^N$, $\overline{\mathbf{x}}=\left(\overline{x}_1,...,\overline{x}_N\right)$ by
\begin{align*}
\overline{x}_i(\zw)=\frac{1}{\mathcal{L}^3\left(C_i(\zw)\right)}\underset{C_i(\zw)}\int x\,\rd x,\qquad i\in\{1,...,N\}.
\end{align*}
Moreover, given a fixed mass vector $\mathbf{m}$, we define the \emph{optimal centroid map} $\mathbf{x}=\left(x_1,...,x_N\right):D\to \Omega^N$ by
\begin{align*}
\mathbf{x}(\z):=\overline{\mathbf{x}}(\z,\w_*(\z)),
\end{align*}
where $\w_*$ is the optimal weight map (Definition \ref{def:OWM}).}
\end{defn}
\noindent 
We now characterise $k$-times continuously differentiable discrete geostrophic solutions in terms of solutions of an ODE-IVP involving the optimal centroid map.

\begin{lem}\label{lem:odeChar}
Let $\overline{\alpha}\in\mathcal{Q}^N(\R^3)$ be given by
\begin{align}\label{eqn:discIC}
\overline{\alpha}=\sum_{i=1}^Nm_i\delta_{\oz_i}.
\end{align}
A map $\alpha:[0,T]\to\mathcal{Q}^N(\R^3)$ given by
\begin{align}\label{eqn:ansatzLem}
\alpha_t=\sum_{i=1}^Nm_i\delta_{z_i(t)}\quad \forall\,t\in[0,T]
\end{align}
is a $k$-times continuously differentiable discrete geostrophic solution with initial measure $\overline{\alpha}$, for $k\in\N$, if and only if the map $\z=(z_1,...,z_N):[0,T]\to\R^{3N}$ is a $k$-times continuously differentiable solution of the ODE-IVP
\begin{align}\label{eqn:ODE-IVP}
\begin{cases}
\ \dot{\mathbf{z}}=W(\mathbf{z}),\\
\ \mathbf{z}(0)=\overline{\mathbf{z}},
\end{cases}
\end{align}
where $\obz=(\oz_1,...,\oz_N)$,
\begin{align}\label{eqn:W}
W(\mathbf{z}):=J_N(\mathbf{z}-\mathbf{x}(\mathbf{z})),
\end{align}
and $J_N\in \R^{3N\times 3N}$ is the block diagonal matrix
\begin{align*}
J_N:=\mathrm{diag}(J,...,J).
\end{align*}
\end{lem}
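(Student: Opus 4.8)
The plan is to substitute the ansatz \eqref{eqn:ansatzLem} into the weak formulation \eqref{eqn:SGGdist}, evaluate the two space integrals explicitly, and then use the fundamental theorem of calculus to reduce the whole statement to a single integrated identity that is manifestly equivalent to the ODE-IVP \eqref{eqn:ODE-IVP}. Throughout, $\overline{\alpha}$ and $\alpha_t$ share the same mass vector $\mathbf{m}$. Write $C_i(t):=C_i(\z(t),\w_*(\z(t)))$ for the optimal Laguerre cells, and recall from Section~\ref{sect:SDOT} that $\nabla\mathcal{B}[\alpha_t]$ is the piecewise-constant optimal transport map $\sum_i z_i(t)\mathds{1}_{C_i(t)}$ with $\mathcal{L}^3(C_i(t))=m_i$ by \eqref{eqn:OWVchar}. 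The first integral of \eqref{eqn:SGGdist} collapses to $\sum_i m_i(\partial_t\varphi(z_i(t),t)+Jz_i(t)\cdot\nabla\varphi(z_i(t),t))$ because $\alpha_t$ is supported on the seeds. For the second integral, on each cell $C_i(t)$ the map $\nabla\mathcal{B}[\alpha_t]$ equals the constant $z_i(t)$, so $\nabla\varphi(\nabla\mathcal{B}[\alpha_t](x),t)=\nabla\varphi(z_i(t),t)$ there; pulling this constant out and using $\int_{C_i(t)}x\,\rd x=m_i x_i(\z(t))$ turns the second integral into $\sum_i m_i Jx_i(\z(t))\cdot\nabla\varphi(z_i(t),t)$. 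This is the step in which the optimal transport structure enters, through the mass constraint \eqref{eqn:OWVchar} and the definition of the optimal centroid map $\mathbf{x}$.

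With these evaluations, \eqref{eqn:SGGdist} becomes, with all terms evaluated along $(z_i(t),t)$,
\[
\int_0^T\sum_i m_i\big(\partial_t\varphi + J(z_i-x_i(\z))\cdot\nabla\varphi\big)\,\rd t = \sum_i m_i\varphi(z_i(T),T)-\sum_i m_i\varphi(\oz_i,0).
\]
I would then subtract from this the elementary identity $\sum_i m_i\int_0^T\frac{\rd}{\rd t}\varphi(z_i(t),t)\,\rd t=\sum_i m_i(\varphi(z_i(T),T)-\varphi(z_i(0),0))$, valid for $C^1$ paths and $\varphi\in\mathscr{D}(\R^3\times\R)$ by the chain rule. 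The $\partial_t\varphi$ contributions cancel, leaving the single identity
\[
\int_0^T\sum_i m_i\big(J(z_i(t)-x_i(\z(t)))-\dot z_i(t)\big)\cdot\nabla\varphi(z_i(t),t)\,\rd t = \sum_i m_i\big(\varphi(z_i(0),0)-\varphi(\oz_i,0)\big),
\]
which I will call $(\star)$ and which must hold for every $\varphi\in\mathscr{D}(\R^3\times\R)$. The lemma is equivalent to the assertion that $(\star)$ holds for all $\varphi$ if and only if $\dot z_i=J(z_i-x_i(\z))$ on $[0,T]$ and $z_i(0)=\oz_i$.

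One direction is immediate: if $\z$ solves \eqref{eqn:ODE-IVP}, then the bracket in the integrand of $(\star)$ vanishes identically (by \eqref{eqn:W}, since $J_N$ is block diagonal) and $z_i(0)=\oz_i$, so both sides of $(\star)$ are zero; together with the continuity of $t\mapsto\alpha_t$ inherited from that of the $z_i$, and with $\alpha_t\in\mathcal{Q}^N(\R^3)$ (which holds because a solution of \eqref{eqn:ODE-IVP} remains in $D$, the domain of $\mathbf{x}$), this produces a continuously differentiable discrete geostrophic solution. For the converse I would argue by localization. Testing $(\star)$ with product functions $\varphi(z,t)=\psi(z)\chi(t)$, $\chi\in C^\infty_c((0,T))$, annihilates the right-hand side and, by continuity in $t$, yields $\sum_i m_i\big(J(z_i(t)-x_i(\z(t)))-\dot z_i(t)\big)\cdot\nabla\psi(z_i(t))=0$ for every $t\in[0,T]$. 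Fixing $t$ and using that the seeds $z_1(t),\dots,z_N(t)$ are distinct (since $\z(t)\in D$), I would choose $\psi$ with prescribed gradient at $z_{i}(t)$ and vanishing gradient at the remaining seeds, forcing each bracket to vanish; this is precisely $\dot\z=W(\z)$. Knowing the integrand of $(\star)$ is now identically zero, $(\star)$ reduces to $\sum_i m_i\varphi(z_i(0),0)=\sum_i m_i\varphi(\oz_i,0)$ for all $\varphi$, i.e.\ $\alpha_0=\overline{\alpha}$ as measures, which yields $z_i(0)=\oz_i$ after a relabelling that is harmless since it alters neither the measure nor the system.

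I expect the main obstacle to be the converse, and within it the localization step: passing from the single integrated identity $(\star)$ to the pointwise ODE requires carefully using the distinctness of the seeds at each fixed time to decouple the contributions of the different paths, and then separating the initial-condition term from the bulk term. The reduction of the transport integral via the semi-discrete optimal transport map is the conceptual heart of the argument but is mechanically routine once the cell-mass constraint \eqref{eqn:OWVchar} and the definition of $\mathbf{x}$ are in hand.
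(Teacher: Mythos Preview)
Your proposal is correct and follows essentially the same approach as the paper: both reduce the weak formulation via the semi-discrete optimal transport map to a sum over seeds involving the optimal centroids $x_i(\z)$, invoke the chain rule for $t\mapsto\varphi(z_i(t),t)$, and then localise using the distinctness of the seeds to extract the pointwise ODE. The only cosmetic difference is that the paper localises in spacetime (an interval $I\ni t_0$ and a neighbourhood $U_i$ of the path $z_i$, together with a bump $\psi_{i,k}$ equal to $z\cdot e_k$ near $z_i(I)$), whereas you localise first in time with $\chi\in C^\infty_c((0,T))$ and then in space with $\psi$ having prescribed gradient at one seed; both arguments rely on the same fact, namely $\z(t)\in D$ for all $t$, and the paper likewise hand-waves the initial condition (``in a similar way'') where you invoke $\alpha_0=\overline{\alpha}$ and a harmless relabelling.
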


\begin{proof}
Suppose that $\z=(z_1,...,z_N):[0,T]\to\R^{3N}$ is a $k$-times continuously differentiable solution of \eqref{eqn:ODE-IVP} and let $\alpha$ be given by \eqref{eqn:ansatzLem}. Since $\z$ is continuous, $\alpha\in C([0,T];\mathcal{P}_c(\R^3))$. We must check that $\alpha$ satisfies \eqref{eqn:SGGdist}. Letting $\varphi\in \mathscr{D}(\R^3\times\R)$ be arbitrary, we have
\begin{align*}
&\int_0^{T}\int_{\R^3}\left(\partial_t \varphi(z,t)+Jz\cdot\nabla \varphi(z,t)\right)\,\rd\alpha_t(z)\,\rd t-\int_0^{T}\int_{\Omega}J x\cdot\nabla \varphi\big(\nabla \mathcal{B}[\alpha_t](x),t\big)\,\rd x\,\rd t\\
&\quad=\sum^N_{i=1}m_i\int_0^{T}\left(\partial_t\varphi(z_i(t),t)+J\big(z_i(t)-{x}_i(\mathbf{z}(t))\big)\cdot\nabla \varphi(z_i(t),t)\right)\,\rd t\\
&\quad=\sum^N_{i=1}m_i\int_0^{T}\left(\partial_t\varphi(z_i(t),t)+\dot{z}_i(t)\cdot\nabla \varphi(z_i(t),t)\right)\,\rd t\\
&\quad=\sum^N_{i=1}m_i\int_0^{T}\done{}{t}\varphi(z_i(t),t)\,\rd t\\
&\quad=\sum^N_{i=1}m_i\varphi(z_i(T),T)-\sum^N_{i=1}m_i\varphi(z_i(0),0)\\
&\quad=\int_{\R^3}\varphi(z,T)\,\rd\alpha_{T}(z)-\int_{\R^3}\varphi(z,0)\,\rd\overline{\alpha}(z),
\end{align*}
as required.

Conversely, suppose that $\alpha:[0,T]\to\mathcal{Q}^N(\R^3)$ given by \eqref{eqn:ansatzLem} is a $k$-times continuously differentiable discrete geostrophic solution with initial measure $\overline{\alpha}$, in the sense of Definition \ref{defn:SGG}. Then substitution of \eqref{eqn:ansatzLem} into \eqref{eqn:SGGdist} yields
\begin{align}
\nonumber &\sum^N_{i=1}m_i\int_0^{T}\left(\partial_t\varphi(z_i(t),t)+J\big(z_i(t)-{x}_i(\z(t))\big)\cdot\nabla \varphi(z_i(t),t)\right)\,\rd t\\
\label{eqn:wSGgeodisc}&\qquad\qquad\qquad\qquad\qquad\qquad\qquad\qquad\qquad\qquad=\sum_{i=1}^Nm_i\Big(\varphi\big(z_i(T),T\big)-\varphi(\overline{z}_i,0)\Big)
\end{align}
for all $\varphi\in \mathscr{D}(\R^3\times\R)$. We now show that the paths $z_i$ can be separated, and deduce that the ODE-IVP \eqref{eqn:ODE-IVP} is satisfied by $\z=(z_1,...,z_N)$ by choosing test functions which isolate each path. Since $\alpha_t\in \mathcal{Q}^N(\R^3)$ for each $t\in [0,T]$, and $\z$ is continuous on $[0,T]$ by hypothesis, the map
\begin{align*}
t\mapsto d(t):=\min_{i\neq j}|z_i(t)-z_j(t)|
\end{align*}
is positive and continuous on the compact interval $[0,T]$ and, therefore, attains a positive minimum on $[0,T]$. In other words, by hypothesis, there exists $d_*>0$ such that
\begin{align*}
\min_{t\in[0,T]}\min_{i\neq j}|z_i(t)-z_j(t)|\geqslant d_*.
\end{align*}
Hence, for fixed $t_0\in (0,T)$ and fixed $i\in\{1,...,N\}$, there exists an open interval $I\subset (0,T)$ containing $t_0$ and an open set $U_i\subset\R^3$ such that
\begin{align*}
z_i(I)\subset U_i\qquad \text{and}\qquad \bigcup_{j\neq i}z_j(I)\cap U_i=\emptyset.
\end{align*}
For a fixed coordinate index $k\in\{1,2,3\}$, consider a test function $\varphi_{i,k}\in \mathscr{D}(\R^3\times\R)$ of the form $\phi\psi_{i,k}$, where $\phi\in C^{\infty}_c(I)$ and $\psi_{i,k}\in C^{\infty}_c(\R^3)$ satisfies
\begin{align*}
\psi_{i,k}(z)=z\cdot e_k \quad \forall\,z \in U_i,\qquad\text{and}\qquad
\bigcup_{j\neq i}z_j(I)\cap\mathrm{supp}(\psi_{i,k})=\emptyset.
\end{align*}
By the Chain Rule, for all $t\in I$,
\begin{align*}
\partial_t\varphi_{i,k}(z_j(t),t)
&=\begin{cases}
\done{}{t}\big(\phi(t)\psi_{i,k}(z_i(t))\big)-\phi(t)\dot{z}_i(t)\cdot e_k \quad &\text{if }j=i,\\
0 &\text{if }j\neq i.
\end{cases}
\end{align*}
Hence, with this particular choice of test function, equation \eqref{eqn:wSGgeodisc} becomes
\begin{align*}
\int_I m_i\done{}{t}\big(\phi(t)\psi_{i,k}(z_i(t))\big)\,\rd t
+\int_I m_i\left(-\dot{z}_i(t)+J\left(z_i(t)-x_i(\mathbf{z}(t))\right)\right)\cdot e_k\phi(t)\,\rd t=0.
\end{align*}
Since $\phi\in C^{\infty}_c(I)$, the first integral is zero. By varying over all $k\in\{1,2,3\}$ and all $\phi\in C^{\infty}_c(I)$ we obtain the pointwise statement that
\begin{align*}
\dot{z}_i(t)=J\left(z_i(t)-x_i(\mathbf{z}(t))\right)
\end{align*}
for all $t\in I$, in particular for $t=t_0$. In a similar way, it can be shown that the inital condition $z_i(0)=\oz_i$ is satisfied. Repeating this for all $t_0\in (0,T)$, and all $i\in\{1,...,N\}$, we deduce that $\mathbf{z}=(z_1,...,z_N)$ satisfies the ODE-IVP \eqref{eqn:ODE-IVP}.
\end{proof}

\begin{rem}\label{rem:discActTra}
{\normalfont The ODE-IVP \eqref{eqn:ODE-IVP} is precisely the discrete analogue of the active transport equation \eqref{eqn:SGgeoActTrans}. Indeed, let $\nu\in\mathcal{Q}^N(\R^3)$ have seed vector $\z=(z_1,...,z_N)$ and consider the extension by $+\infty$ of the convex function $\mathcal{B}[\nu]$ to $\R^3$, which we will again denote by $\mathcal{B}[\nu]$. For each $i\in\{1,...,N\}$, $x_i(\mathbf{z})$ is the centroid of the subdifferential of $\mathcal{B}[\nu]^*$ evaluated at the point $z_i$.
To see this, first recall that for any convex function $f:\Omega\to\R$ and any constant $c\in\R$
\begin{align*}
\partial f^*(z)=\partial (f+c)^*(z)\qquad \forall\, z\in\R^3.
\end{align*}
Therefore, the characterisation of $\mathcal{B}[\nu]$ given by \eqref{eqn:Phi} and \eqref{eqn:B} implies that, for each $i\in\{1,...,N\}$,
\begin{align*}
\partial(\mathcal{B}[\nu])^*(z_i)=\partial\left(\Phi-\int_{\Omega}\Phi\,\rd x\right)^*(z_i)=\partial \Phi^*(z_i),
\end{align*}
where, letting $\w_*(\z)=(w_1,...,w_N)$,
\begin{align*}
\Phi(x)=\begin{cases}
\frac{1}{2}|x|^2-\frac{1}{2}\underset{j}\min\left\{|x-z_j|^2-w_j\right\}\qquad \text{if}\ x\in\Omega,\\
+\infty \qquad\qquad\qquad\qquad\qquad\qquad\quad\hspace{4pt} \text{if}\ x\notin\Omega.
\end{cases}
\end{align*}
Note that
\begin{align*}
\Phi^*(z_i)=\frac{1}{2}\left(|z_i|^2-w_i\right) \qquad \forall\, i\in\{1,...,N\}.
\end{align*}
By the characterisation of the subdifferential of $\Phi$ in terms of its Legendre-Fenchel transform (see, for example, \cite[Proposition 2.4]{villani2003topics}), we have
\begin{align*}
\partial \Phi^*(z_i)&=\left\{x\in\Omega\ :\ \Phi(x)+\Phi^*(z_i)=x\cdot z_i\right\}\\
&=\left\{x\in\Omega\ :\ |x-z_i|^2-w_i=\min_{j}\left\{|x-z_j|^2-w_j\right\}\right\}\\
&=C_i(\z,\w_*(\z)).
\end{align*}
Hence
\begin{align*}
\partial \left(\mathcal{B}[\nu]\right)^*(z_i)=C_i(\z,\w_*(\z)),
\end{align*}
as required. This is consistent with the `geometric interpretation' of a geostrophic solution described in \cite[p.803]{loeper2006fully}.}
\end{rem}

Having characterised $k$-times continuously differentiable discrete geostrophic solutions in terms of solutions of the ODE-IVP \eqref{eqn:ODE-IVP}, we now aim to show that solutions of \eqref{eqn:ODE-IVP} exist and that the corresponding geostrophic solutions are energy-conserving. After proving some preliminary results (Lemma \ref{lem:ape} and Lemma \ref{lem:diffOWM}), we prove the following Proposition.

\begin{prop}\label{prop:ODEex}
For $\obz=(\oz_1,...,\oz_N)\in \R^{3N}$ satisfying
\begin{align}\label{eqn:wellPrep}
\min_{i\neq j}\vert(\oz_i-\oz_j)\cdot e_3\vert\geqslant r
\end{align}
for some $r>0$, there exists a unique $C^2$-solution of \eqref{eqn:ODE-IVP} on the interval $[0,T]$ for any $T\in(0,\infty)$.
\end{prop}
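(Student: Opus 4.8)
The plan is to treat the vector field $W$ defined in \eqref{eqn:W} as an autonomous vector field on the open set $D\subset\R^{3N}$ of distinct seed vectors, apply the standard Picard--Lindel\"of theory to obtain a local solution, and then upgrade local to global existence by exploiting two structural features of the problem: conservation of the vertical components of the seeds and an a priori growth bound. First I would record that $W=J_N(\,\cdot\,-\mathbf{x})$ is $C^1$, hence locally Lipschitz, on $D$; this is a consequence of the regularity of the optimal centroid map $\mathbf{x}$ supplied by the preliminary Lemmas~\ref{lem:ape} and~\ref{lem:diffOWM} (where convexity of $\Omega$ is used). Picard--Lindel\"of then yields a unique maximal $C^1$-solution $\mathbf{z}:[0,T_{\max})\to D$ of \eqref{eqn:ODE-IVP}, and it remains only to prove $T_{\max}=\infty$.

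The crucial observation is that the vertical components of the seeds are conserved. Since the matrix $J$ in \eqref{eqn:J} has vanishing third row, $(Jv)\cdot e_3=0$ for every $v\in\R^3$, so that $\frac{d}{dt}\big(z_i(t)\cdot e_3\big)=\big(J(z_i-x_i(\mathbf{z}))\big)\cdot e_3=0$ for each $i$. Hence $z_i(t)\cdot e_3=\oz_i\cdot e_3$ for all $t$, and the well-preparedness hypothesis \eqref{eqn:wellPrep} propagates in time:
\[
\big|(z_i(t)-z_j(t))\cdot e_3\big|=\big|(\oz_i-\oz_j)\cdot e_3\big|\geq r\qquad\forall\,i\neq j,\ t\in[0,T_{\max}).
\]
In particular the trajectory stays in the closed region $D_r:=\{\mathbf{z}\in D:\min_{i\neq j}|(z_i-z_j)\cdot e_3|\geq r\}\subset D$, so the seeds can never collide and $W$ remains well-behaved along the solution.

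To preclude blow-up I would exploit the skew-symmetry of $J$ together with the boundedness of the centroids. Writing $R_\Omega:=\sup_{x\in\Omega}|x|<\infty$ and using $v\cdot Jv=0$ for all $v\in\R^3$, one finds
\[
\frac{d}{dt}\tfrac12|z_i|^2=z_i\cdot J\big(z_i-x_i(\mathbf{z})\big)=-z_i\cdot Jx_i(\mathbf{z})\leq|z_i|\,|x_i(\mathbf{z})|\leq R_\Omega|z_i|,
\]
since $\mathbf{x}(\mathbf{z})\in\Omega^N$, whence $|z_i(t)|\leq|\oz_i|+R_\Omega t$; thus on any finite interval the solution remains bounded. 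Combining this with the preceding paragraph, if $T_{\max}<\infty$ then $\mathbf{z}([0,T_{\max}))$ is contained in $D_r\cap\{|\mathbf{z}|\leq R\}$ with $R:=\max_i|\oz_i|+R_\Omega T_{\max}$, which is closed, bounded and contained in $D$, hence a compact subset of $D$. This contradicts the maximal-interval criterion, which forces a solution with finite $T_{\max}$ to leave every compact subset of $D$ as $t\to T_{\max}$. Therefore $T_{\max}=\infty$, and uniqueness on $[0,T]$ for every $T\in(0,\infty)$ is inherited from the local uniqueness.

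I expect the only genuinely delicate ingredient to be the regularity of $W$ on $D$, that is the $C^1$ (or at least locally Lipschitz) dependence of the optimal centroids on the seed positions as $\mathbf{z}$ varies within $D$; but this is exactly what Lemmas~\ref{lem:ape} and~\ref{lem:diffOWM} are designed to provide, so it may be treated as an input here. Within the present proposition the essential mechanism is thus the conservation of the vertical seed components, which confines the flow to the well-prepared region $D_r$, simultaneously preventing seed collisions (the locus where $W$ degenerates) and, in tandem with the elementary estimate above, delivering global existence.
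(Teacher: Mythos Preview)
Your argument is correct and rests on the same two ingredients as the paper's proof: conservation of the vertical seed components (so the trajectory stays in the well-prepared region $D_r\subset D$) and the linear a priori growth bound $|z_i(t)|\leq|\oz_i|+R_\Omega t$ coming from the skew-symmetry of $J$ and the boundedness of $\Omega$. The only difference is packaging: the paper carries out an explicit step-by-step Picard--Lindel\"of iteration, showing that after each step of length $T_0=\min\{T,r/(2M)\}$ the endpoint again satisfies \eqref{eqn:wellPrep} and the same bound $M$ on $|W|$ applies on the next $r/2$-ball, so the solution extends over $[0,T]$ in finitely many steps. You instead invoke the maximal-interval/escape-from-compacta criterion, which is cleaner and avoids the bookkeeping, at the cost of assuming that standard result.

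One small correction: the regularity of $W$ on $D$ is not supplied by Lemma~\ref{lem:ape} (that is the a priori estimate lemma, which you in fact re-derive). The $C^1$ regularity of the optimal centroid map $\mathbf{x}$ comes from Corollary~\ref{cor:diffVx} (differentiability of the centroid and volume maps on $\widetilde{D}$) together with Lemma~\ref{lem:diffOWM} (differentiability of the optimal weight map). With that citation fixed, your proof is complete.
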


By Lemma \ref{lem:odeChar}, this gives rise to a unique discrete geostrophic solution with initial measure \eqref{eqn:discIC} via the formula \eqref{eqn:ansatzLem}. As we will see in Section \ref{sect:generalExistence}, the condition \eqref{eqn:wellPrep} on the initial data is not restrictive for the purpose of constructing a geostrophic solution with arbitrary initial measure in $\mathcal{P}_c(\R^3)$. We begin by explaining its relevance.

We show below that the map $W$, defined by \eqref{eqn:W}, is continuously differentiable, and therefore locally Lipschitz on $D$. However, as we demonstrate in Remark \ref{rem:noCtsExt}, $W$ does not, in general, admit continuous extension to $\R^{3N}$. In order to apply the Picard-Lindel\"{o}f Existence Theorem to obtain a unique solution of \eqref{eqn:ODE-IVP} on a given time interval $[0,T]$, it is therefore necessary to ensure that any solution trajectory is bounded away from the boundary of $D$. This is guaranteed a priori if $\overline{\z}$ satisfies \eqref{eqn:wellPrep}. Indeed, since the third row of the matrix $J$ is zero, if $\obz\in\R^{3N}$ satisfies \eqref{eqn:wellPrep} and $\z$ is a solution of the corresponding ODE-IVP \eqref{eqn:ODE-IVP}, then $\z(t)$ satisfies \eqref{eqn:wellPrep} for all times $t$.

We now prove the claimed regularity of the map $W$ in the case where the domain $\Omega$ is open, bounded and convex (see Remark \ref{rem:nonconvex} for a discussion of the case where $\Omega$ is non-convex). To do so, we use the following results regarding the regularity of the centroid map $\obx$ (see Definition \ref{def:centroid}) and the volume map $\mathbf{V}$ (see Definition \ref{defn:vol}), as well as the structure of the  matrix of partial derivatives of $\mathbf{V}$ with respect to $\w$, which can be described using the notions of a \emph{Laplacian matrix} and the \emph{dual graph} of a Laguerre tesselation.

\begin{defn}[Dual graph]\label{defn:dualGraph}
{\normalfont Given a Laguerre tessellation $\{C_i\}_{i=1}^N$ of $\Omega$, for each $i\in\{1,...,N\}$ the set of neighbours of $i$ is defined to be the set
\begin{align*}
N_i:=\left\{j\in\{1,...,N\}\ \vert\ j\neq i,\ C_i\cap C_j\neq \emptyset\right\}.
\end{align*}
The (undirected) graph $(V,E)$ given by
\begin{align*}
V=\{1,...,N\},\qquad E=\big\{\{i,j\}\subset\{1,...,N\}:j\in N_i\big\},
\end{align*}
is referred to as the \emph{dual graph} of the Laguerre decomposition $\{C_i\}_{i=1}^N$.}
\end{defn}
\begin{defn}[Laplacian matrix \cite{mohar1997some}]\label{defn:laplacianMatrix}
{\normalfont Given a weighted graph $G=(V,E,h)$, where $V=\{1,...,N\}$ and $h:E\to\R$, the adjacency matrix of $G$, $A_G\in\R^{N\times N}$, is given by
\begin{align*}
(A_G)_{ij}=\begin{cases}
h(\{i,j\})\quad &\text{if}\quad \{i,j\}\in E,\\
0\quad &\text{otherwise}.
\end{cases}
\end{align*}
The degree matrix of $G$, $D_G\in\R^{N\times N}$, is the diagonal matrix such that
\begin{align*}
(D_G)_{ii}=\sum\limits_{j=1}^N(A_G)_{ij}.
\end{align*}
The \emph{Laplacian matrix} of $G$, $L_G\in\R^{N\times N}$, is then given by
\begin{align*}
L_G=D_G-A_G.
\end{align*}}
\end{defn}

\begin{thm}[{\normalfont c.f. \cite[Proposition 2 and Lemma 2]{de2019differentiation}, \cite[Theorem 4.1]{kitagawa2016convergence}, \cite[Theorem 45]{merigot2021optimal}}]\label{thm:diffFctnal}
Let $f\in C(\Omega)\cap W^{1,1}(\Omega)$ and define $F=(F_1,...,F_N):\widetilde{D}\to\R^{N}$ by
\begin{align*}
F_i(\zw):=\int\limits_{C_i(\zw)}f(x)\,\rd x.
\end{align*}
Then $F$ is continuously differentiable. In particular, for $(\zw)\in\widetilde{D}$ let $G=(V,E)$ denote the dual graph of the Laguerre tessellation $\{C_i(\zw)\}_{i=1}^N$ and define $h:E\to\R$ by
\begin{align*}
h(\{i,j\})=\frac{1}{2|z_j-z_i|}\int_{C_i\cap C_j}f(x)\,\rd \mathcal{H}^2(x).
\end{align*}
Then the matrix $D_{\w}F(\zw)$ of partial derivatives of $F$ with respect to $\w$ evaluated at $(\zw)$ is the Laplacian matrix of the weighted graph $G=(V,E,h)$.
\end{thm}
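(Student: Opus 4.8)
The plan is to compute the partial derivatives of each $F_i$ directly by differentiating an integral over the moving domain $C_i(\zw)$, and to read off the entries of $D_{\w}F$ from the resulting boundary fluxes. The geometric input is that each Laguerre cell is a convex polytope whose interior faces $C_i\cap C_j$ lie in the bisecting hyperplanes
\[
H_{ij}=\left\{x\in\R^3 : 2(z_j-z_i)\cdot x=|z_j|^2-|z_i|^2+w_i-w_j\right\},
\]
which depend affinely on $(\zw)$, while the portion of $\partial C_i$ lying on $\partial\Omega$ is fixed. First I would restrict attention to the open, dense subset $\widetilde{D}_0\subset\widetilde{D}$ of generators for which the combinatorial (adjacency) structure of the tessellation is locally constant and no face is degenerate; on $\widetilde{D}_0$ the faces vary smoothly and a transport-theorem argument is available.

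On $\widetilde{D}_0$, the derivative of $F_i=\int_{C_i(\zw)}f\,\rd x$ equals the flux of $f$ through $\partial C_i$ against the outward normal velocity $V_n$ of the boundary. For the parameter $w_j$ with $j\neq i$, only the single face $C_i\cap C_j$ moves: writing $H_{ij}$ as $\hat n_{ij}\cdot x=c_{ij}$ with outward unit normal $\hat n_{ij}=(z_j-z_i)/|z_j-z_i|$ and signed offset $c_{ij}=(|z_j|^2-|z_i|^2+w_i-w_j)/(2|z_j-z_i|)$, one computes $\partial_{w_j}c_{ij}=-1/(2|z_j-z_i|)$, so the outward normal velocity of the face is $-1/(2|z_j-z_i|)$ and
\[
\partial_{w_j}F_i=-\frac{1}{2|z_j-z_i|}\int_{C_i\cap C_j}f\,\rd\mathcal{H}^2=-h(\{i,j\}).
\]
For the parameter $w_i$, every face $C_i\cap C_j$ with $j\in N_i$ moves outward at speed $1/(2|z_j-z_i|)$, whence $\partial_{w_i}F_i=\sum_{j\in N_i}h(\{i,j\})$. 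Comparing with Definition \ref{defn:laplacianMatrix} gives $D_{\w}F=D_G-A_G=L_G$ on $\widetilde{D}_0$. The derivatives with respect to the seeds $z_j$ follow from the same flux formula, now incorporating both the translation and rotation of the faces $H_{ij}$ and the variation of $|z_j-z_i|$; these supply the (continuous) $\z$-block of $DF$ needed for the full $C^1$ claim but do not enter the Laplacian statement.

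To upgrade this to a statement valid on all of $\widetilde{D}$, I would argue as follows. First, the candidate derivative is continuous everywhere: each entry $h(\{i,j\})$ depends continuously on $(\zw)$ because $f$ is continuous and the faces vary continuously in location and in $\mathcal{H}^2$-measure, with the integral tending to $0$ precisely as a face degenerates to a lower-dimensional set, so $L_G$ extends continuously across the combinatorial transitions. Second, $F$ is continuous and in fact locally Lipschitz, since the cells depend Lipschitz-continuously on $(\zw)$ in both Hausdorff distance and volume and $f$ is bounded on compact subsets of $\Omega$. Being locally Lipschitz, $F$ is differentiable almost everywhere and absolutely continuous along line segments, and the degenerate parameter set meets a generic segment in a set of measure zero; the fundamental theorem of calculus along such segments, together with the a.e.\ identity $D_{\w}F=L_G$ inherited from $\widetilde{D}_0$ and the continuity of $L_G$, forces $F$ to be everywhere differentiable with $D_{\w}F=L_G$. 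Continuity of the full derivative then yields $F\in C^1(\widetilde{D})$.

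I expect the main obstacle to be the rigorous treatment of the combinatorial transitions and degenerate configurations rather than the (elementary) flux computation on $\widetilde{D}_0$: one must verify that faces appearing or disappearing, and faces sliding along the fixed boundary $\partial\Omega$, produce no jump in the derivative. This is exactly where the hypotheses enter — continuity of $f$ guarantees well-defined boundary traces and continuity of the face integrals (hence of $L_G$), while the membership $f\in W^{1,1}(\Omega)$ provides the integrability needed to control the seed-derivatives and the near-boundary behaviour required to promote the almost-everywhere identity to a genuine $C^1$ statement.
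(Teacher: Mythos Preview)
The paper does not supply its own proof of this theorem: it is quoted from the literature (de Gournay--Kahn--Lebrat, Kitagawa--M\'erigot--Thibert, and M\'erigot--Thibert), with two clarifying remarks afterwards but no argument. So there is no in-paper proof to compare against; I can only assess your proposal on its merits and against the approach in those references.

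Your proposal is the standard one and is essentially correct. The flux computation on the combinatorially stable set $\widetilde{D}_0$ is right: the bisecting hyperplane $H_{ij}$ is correctly identified, the outward unit normal from $C_i$ is $(z_j-z_i)/|z_j-z_i|$, and the signed offset $c_{ij}$ has $\partial_{w_j}c_{ij}=-1/(2|z_j-z_i|)$ and $\partial_{w_i}c_{ij}=+1/(2|z_j-z_i|)$, yielding exactly the off-diagonal and diagonal entries of the Laplacian $L_G$ with edge weights $h$. This matches the derivation in the cited references up to the factor of $2$ that the paper flags (coming from the choice of cost $|x-y|^2$ rather than $\tfrac12|x-y|^2$).

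Your extension from $\widetilde{D}_0$ to all of $\widetilde{D}$ is also the right idea, and it is precisely what underlies the paper's remark that for the quadratic cost the genericity assumption of \cite[Theorem~45]{merigot2020optimal} is not needed once the cells are non-empty. Two small points to firm up if you write this out in full: (i) the claim that the degenerate set meets a generic segment in measure zero deserves a sentence --- for the quadratic cost the degenerate configurations in $\w$ (those where some face has positive codimension, or where $\partial\Omega$ meets a face non-transversally) are contained in a finite union of proper analytic subvarieties, which is enough; (ii) the continuity of $h(\{i,j\})$ across a face birth/death really does use that $f$ is continuous on $\overline{\Omega}$ so that the surface integral over a shrinking $2$-face tends to $0$, and you correctly identify this as where the hypothesis $f\in C(\Omega)$ enters. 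With those caveats your Lipschitz-plus-continuous-a.e.-derivative argument gives $F\in C^1(\widetilde{D})$ and $D_{\w}F=L_G$, as claimed.
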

Note that the expressions for the partial derivatives of $F$ with respect to the weights given above differs by a factor of $2$ from those given in \cite{de2019differentiation} due to our choice of quadratic cost function. Note also that the assumption that the seed locations are \emph{generic} with respect to the cost, which is used in the proof of \cite[Theorem 45]{merigot2021optimal}, is not needed for the quadratic cost if the cells are non-empty.
\begin{defn}\label{defn:vol}
{\normalfont We define the \emph{volume map} $\mathbf{V}:D\times \R^N\to\R^N,\ \mathbf{V}=(V_1,...,V_N),$ by
\begin{align*}
V_i(\zw)=\mathcal{L}^3\left(C_i(\zw)\right).
\end{align*}}
\end{defn}

By combining Theorem \ref{thm:diffFctnal} with the quotient rule for derivatives, we immediately obtain the following corollary.

\begin{cor}[{\normalfont c.f. \cite[Proposition 2]{de2019differentiation}, \cite[Theorem 4.1]{kitagawa2016convergence}, \cite[Lemma 2.4]{bourne2015centroidal}}]\label{cor:diffVx}
The volume map $\mathbf{V}$ and the centroid map $\obx$ (see Definition \ref{def:centroid}) are continuously differentiable on $\widetilde{D}$. In particular, for $(\zw)\in\widetilde{D}$, let $G=(V,E)$ denote the dual graph of the Laguerre tessellation $\{C_i(\zw)\}_{i=1}^N$ and define $h:E\to\R$ by
\begin{align*}
h(\{i,j\})=\frac{\mathcal{H}^2(C_i\cap C_j)}{2|z_j-z_i|},
\end{align*}
where $\mathcal{H}^2$ denotes the 2-dimensional Hausdorff measure on $\R^3$. Then the matrix $D_{\w}V(\zw)$ of partial derivatives of $F$ with respect to $\w$ evaluated at $(\zw)$ is the Laplacian matrix of the weighted graph $G=(V,E,h)$.
\end{cor}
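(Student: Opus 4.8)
The plan is to deduce both assertions directly from Theorem \ref{thm:diffFctnal} by making two judicious choices of the integrand $f$, and then to treat the centroid map by the quotient rule. The only hypothesis of Theorem \ref{thm:diffFctnal} that needs to be checked in each case is that the chosen $f$ lies in $C(\Omega)\cap W^{1,1}(\Omega)$; since $\Omega$ is bounded, this will be immediate for the functions we use.

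First I would treat the volume map. Taking $f\equiv 1$, which certainly belongs to $C(\Omega)\cap W^{1,1}(\Omega)$ because $\Omega$ is bounded, Theorem \ref{thm:diffFctnal} gives that each
\begin{align*}
V_i(\zw)=\mathcal{L}^3(C_i(\zw))=\int_{C_i(\zw)}1\,\rd x
\end{align*}
is continuously differentiable on $\widetilde{D}$, and that the weight associated with the edge $\{i,j\}$ is
\begin{align*}
h(\{i,j\})=\frac{1}{2|z_j-z_i|}\int_{C_i\cap C_j}1\,\rd\mathcal{H}^2(x)=\frac{\mathcal{H}^2(C_i\cap C_j)}{2|z_j-z_i|},
\end{align*}
so that $D_{\w}\mathbf{V}(\zw)$ is exactly the Laplacian matrix of the weighted graph $(V,E,h)$. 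This establishes both the differentiability of $\mathbf{V}$ and the final (Laplacian) assertion of the corollary.

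Next I would treat the centroid map. For each coordinate index $k\in\{1,2,3\}$ the function $f(x)=x\cdot e_k$ is continuous on the bounded set $\Omega$ with bounded weak gradient, hence lies in $C(\Omega)\cap W^{1,1}(\Omega)$. Applying Theorem \ref{thm:diffFctnal} to each such $f$ shows that the vector-valued numerator $(\zw)\mapsto \int_{C_i(\zw)}x\,\rd x$ is continuously differentiable on $\widetilde{D}$ for every $i$. Since $(\zw)\in\widetilde{D}$ forces $V_i(\zw)=\mathcal{L}^3(C_i(\zw))>0$, and $V_i$ is continuous, the denominator in
\begin{align*}
\overline{x}_i(\zw)=\frac{1}{\mathcal{L}^3(C_i(\zw))}\int_{C_i(\zw)}x\,\rd x
\end{align*}
is positive and $C^1$; the quotient rule then yields that $\obx$ is continuously differentiable on $\widetilde{D}$.

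There is no genuine obstacle here, as the statement is a formal consequence of Theorem \ref{thm:diffFctnal}. The only points requiring routine care are the verification that $f\equiv 1$ and $f(x)=x\cdot e_k$ satisfy $f\in C(\Omega)\cap W^{1,1}(\Omega)$ (which uses boundedness of $\Omega$), and the observation that the definition of $\widetilde{D}$ guarantees $\mathcal{L}^3(C_i(\zw))>0$ so that the quotient rule applies. Note that the corollary concerns $\mathbf{V}$ and $\obx$ as functions on $\widetilde{D}$, so no regularity of the optimal weight map $\w_*$ is required at this stage; that is addressed separately when one passes to the optimal centroid map $\mathbf{x}(\z)=\obx(\z,\w_*(\z))$.
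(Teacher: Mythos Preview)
Your proposal is correct and follows exactly the approach the paper indicates: the paper's proof consists of the single sentence ``By combining Theorem \ref{thm:diffFctnal} with the quotient rule for derivatives, we immediately obtain the following corollary,'' and you have simply spelled out the two applications of Theorem \ref{thm:diffFctnal} (with $f\equiv 1$ and $f(x)=x\cdot e_k$) together with the quotient rule that this sentence presupposes.
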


The optimal centroid map $\mathbf{x}$ (see Definition \ref{def:centroid}) is the composition of the centroid map $\obx$ with the optimal weight map $\w_*$ (see Definition \ref{def:OWM}). To show that $\mathbf{x}$ is continuously differentiable we now prove that $\w_*$ is continuously differentiable. At the time of writing, we believe this result to be novel.

\begin{lem}\label{lem:diffOWM}
Given a fixed mass vector $\mathbf{m}$, the corresponding optimal weight map $\w_*:D\to \R^N$ is continuously differentiable.
\end{lem}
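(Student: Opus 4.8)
The plan is to realise $\w_*$ locally as an implicit function, using the continuous differentiability of the volume map $\mathbf{V}$ established in Corollary \ref{cor:diffVx} together with the Implicit Function Theorem. Fix $\z_0\in D$ and write $\w_0:=\w_*(\z_0)$. By the characterisation of optimal weight vectors \eqref{eqn:OWVchar} we have $V_i(\z_0,\w_0)=m_i>0$ for every $i$, so $(\z_0,\w_0)\in\widetilde{D}$, and by Definition \ref{def:OWM} the last component of $\w_0$ vanishes. Let $\iota:\R^{N-1}\to\R^N$ denote the embedding $\iota(w_1,\dots,w_{N-1})=(w_1,\dots,w_{N-1},0)$, and on the open set $\{(\z,\widetilde{\w}):(\z,\iota(\widetilde{\w}))\in\widetilde{D}\}$ define $\Psi=(\Psi_1,\dots,\Psi_{N-1})$ by $\Psi_i(\z,\widetilde{\w}):=V_i(\z,\iota(\widetilde{\w}))-m_i$. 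Then $\Psi$ is $C^1$ and $\Psi(\z_0,\widetilde{\w}_0)=0$, where $\widetilde{\w}_0$ collects the first $N-1$ components of $\w_0$.

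The crux is to show that the partial Jacobian $D_{\widetilde{\w}}\Psi(\z_0,\widetilde{\w}_0)$ is invertible. By the chain rule this matrix is exactly the $(N-1)\times(N-1)$ principal submatrix $\widehat{L}$ of $D_{\w}\mathbf{V}(\z_0,\w_0)$ obtained by deleting the last row and column. By Corollary \ref{cor:diffVx}, $D_{\w}\mathbf{V}(\z_0,\w_0)$ is the Laplacian matrix $L_G$ of the weighted dual graph $G=(V,E,h)$ with nonnegative weights $h(\{i,j\})=\mathcal{H}^2(C_i\cap C_j)/(2|z_j-z_i|)$. Hence $L_G$ is symmetric and positive semidefinite, with quadratic form
\begin{align*}
x^{\mathrm{T}}L_G\,x=\sum_{\{i,j\}\in E}h(\{i,j\})\,(x_i-x_j)^2,
\end{align*}
so that its kernel consists exactly of those $x$ constant on each connected component of the subgraph formed by the edges of strictly positive weight. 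Granting that this positive-weight subgraph is connected (addressed below), we obtain $\ker L_G=\mathrm{span}\{\mathbf{e}\}$. Then for $y\in\R^{N-1}$ with $\widehat{L}y=0$ we have $0=y^{\mathrm{T}}\widehat{L}y=\iota(y)^{\mathrm{T}}L_G\,\iota(y)$, forcing $\iota(y)\in\mathrm{span}\{\mathbf{e}\}$; since the last component of $\iota(y)$ vanishes this gives $\iota(y)=0$, hence $y=0$. Thus $\widehat{L}$ is positive definite and invertible.

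The Implicit Function Theorem then furnishes a $C^1$ map $\z\mapsto\widetilde{\w}(\z)$, defined near $\z_0$, with $\Psi(\z,\widetilde{\w}(\z))=0$ and $\widetilde{\w}(\z_0)=\widetilde{\w}_0$. The vector $\iota(\widetilde{\w}(\z))$ satisfies $V_i=m_i$ for $i\leqslant N-1$, and because $\sum_{i=1}^N V_i(\z,\cdot)=\mathcal{L}^3(\Omega)=1=\sum_{i=1}^N m_i$ identically, the remaining constraint $V_N=m_N$ holds automatically; its last component vanishes by construction. By the uniqueness asserted in Definition \ref{def:OWM} we conclude $\w_*(\z)=\iota(\widetilde{\w}(\z))$ on a neighbourhood of $\z_0$, so $\w_*$ is $C^1$ there. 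As $\z_0\in D$ was arbitrary, $\w_*$ is continuously differentiable on $D$.

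The main obstacle is the connectivity of the positive-weight subgraph, i.e. that any two cells can be joined through a chain of cells meeting in faces of positive $\mathcal{H}^2$-measure. I would argue this by contradiction: a disconnection yields a partition $\{1,\dots,N\}=S\sqcup S^{\mathrm{c}}$ with $\mathcal{H}^2(C_i\cap C_j)=0$ for all $i\in S$, $j\in S^{\mathrm{c}}$. Setting $A=\bigcup_{i\in S}C_i$ and $B=\bigcup_{j\in S^{\mathrm{c}}}C_j$, the cells tile $\Omega$, so $\Omega\subset A\cup B$, while $K:=A\cap B\cap\Omega$ is contained in finitely many intersections $C_i\cap C_j$ each of which, being a convex subset of a plane of zero area, has dimension at most $1$. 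Removing the codimension-$2$ set $K$ from the connected set $\Omega$ leaves $\Omega\setminus K$ connected, yet $\Omega\setminus K$ splits into the two nonempty relatively open pieces $A\cap B^{\mathrm{c}}\cap\Omega$ and $B\cap A^{\mathrm{c}}\cap\Omega$ (each of positive measure, since the cells in $S$ and in $S^{\mathrm{c}}$ have positive volume and $K$ is $\mathcal{L}^3$-null), a contradiction. Here the convexity of $\Omega$, which guarantees both that each Laguerre cell $C_i$ is convex and that $\Omega$ is connected, is precisely the hypothesis that makes this separation argument work.
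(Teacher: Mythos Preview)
Your argument is correct and follows essentially the same route as the paper's: both restrict to the hyperplane $\{w_N=0\}$ (the paper via a matrix $A=\begin{pmatrix}\mathbf{I}_{N-1}\\ \mathbf{0}\end{pmatrix}$, you via the embedding $\iota$), apply the Implicit Function Theorem to the reduced volume constraints, and deduce invertibility of the reduced Jacobian from the fact that the Laplacian $L_G=D_\w\mathbf{V}$ is positive semidefinite with $\ker L_G=\mathrm{span}\{\mathbf{e}\}$. Your use of the global uniqueness in Definition~\ref{def:OWM} to identify the implicit function with $\w_*$ is exactly what the paper does implicitly.

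The one genuine addition in your write-up is the proof of connectivity of the positive-weight dual graph, which the paper simply asserts. Your separation argument is sound: the key points are that a convex planar set of zero $\mathcal{H}^2$-measure lies in a line (hence $K$ has Hausdorff dimension at most $1$), that removing a closed set of codimension at least $2$ from the connected open set $\Omega$ leaves it connected, and that $A\setminus B$ and $B\setminus A$ are each relatively closed (hence clopen) in $\Omega\setminus K$ because $A$ and $B$ are closed. This fills a gap the paper leaves to the reader, and your observation that convexity of $\Omega$ is precisely what makes the argument go through is apt.
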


\begin{proof}
Let $A\in\mathbb{R}^{N\times (N-1)}$ be the matrix
\begin{align*}
A=\left(\begin{array}{c}
 \mathbf{I}_{N-1}\\
\mathbf{0}
\end{array}\right),
\end{align*}
where $\mathbf{I}_{N-1}$ denotes the $(N-1)\times(N-1)$ identity matrix. Define
the residual map $\mathbf{r}:D\times\mathbb{R}^{N-1}\to\mathbb{R}^{N-1}$ by
\begin{align*}
\mathbf{r}(\zw)=A^{\mathrm{T}} \left( \mathbf{V}(\mathbf{z},A \mathbf{w})-\mathbf{m} \right).
\end{align*}
Observe that $AA^{\mathrm{T}}\w_*=\w_*$ so for all $\z\in D$ 
\begin{align*}
\mathbf{r}(\z,A^{\mathrm{T}}\w_*(\z))=0.
\end{align*}
Now fix an arbitrary seed vector $\z\in D$. Since $m_i>0$ for each $i\in\{1,...,N\}$, $(\z,\w_*(\z))\in \widetilde{D}$. By Corolloary \ref{cor:diffVx}, $\mathbf{V}$ is therefore continuously differentiable in a neighbourhood of $(\z,\w_*(\z))$. The residual map $\mathbf{r}$ is therefore continuously differentiable in a neighbourhood of $(\z,A^{\mathrm{T}}\w_*(\z))$ and
\begin{align*}
D_{\w}\mathbf{r}(\z,A^{\mathrm{T}}\w_*(\z))=A^{\mathrm{T}}D_{\w}V(\z,\mathbf{w}_*(\z))A.
\end{align*}
The weighted Laplacian matrix $D_{\w}V(\z,\mathbf{w}_*(\z))$ has positive entries and corresponds to a connected graph. Hence it is symmetric, positive semi-definite and its kernel is $\mathrm{span}\{\mathbf{e}\}$, where $\mathbf{e}=(1,...,1)\in\R^N$ (see for example \cite[Section 2.4]{mohar1997some}). The symmetric matrix $L=D_{\w} \mathbf{r}(\z,A^{\mathrm{T}}\w_*(\z))$ is therefore invertible. Indeed, if $\mathbf{y} \in \mathbb{R}^{N-1}$,
\begin{align*}
 \mathbf{y}^T L  \mathbf{y} = 0  \quad \Longleftrightarrow \quad
 \mathbf{y}^T A^{\mathrm{T}} D_{\w} \mathbf{V}(\z,\w_*(\z)) A \mathbf{y} = 0
\quad \Longleftrightarrow \quad
 A \mathbf{y} \in \mathrm{span}\{\mathbf{e}\} \quad \Longleftrightarrow \quad \mathbf{y} = \mathbf{0}.
\end{align*}
By the Implicit Function Theorem (see, for example, \cite[Theorem 10.2.1, p.270]{dieudonne1969foundations}) applied to $\mathbf{r}$ at the point $(\z,A^{\mathrm{T}}\w_*(\z))$, the function $A^{\mathrm{T}}\w_*:D\to\R^{N-1}$ is therefore continuously differentiable in a neighbourhood of $\z$. Since $AA^{\mathrm{T}}\w_*=\w_*$, it follows that the optimal weight map $\w_*$ is continuously differentiable in a neighbourhood of $\z$, as required.
\end{proof}

\begin{rem}[Counterexample 1]\label{rem:noCtsExt}
{\normalfont Combining Corollary \ref{cor:diffVx} with Lemma \ref{lem:diffOWM} we see that the optimal centroid map $\mathbf{x}$ is continuously differentiable on its domain $\widetilde{D}$. However, it does not, in general, admit continuous extension to $\R^{3N}$. We demonstrate this by means of a simple counter example in the case $N=2$. Let $\Omega\subset \R^3$ be the ball of volume $1$ centred at the origin, fix the mass vector $\mathbf{m}=(1/2,1/2)$, and consider the map $\z=(z_1,z_2):(-1,1)\to \R^6$ given by
\begin{align*}
z_1(s)=(s,0,0),\qquad z_2(s)=-z_1(s)\qquad \forall\,s\in (-1,1).
\end{align*}
Letting $x_*$ denote the centroid of the spherical cap
\begin{align*}
C:=\{x\in \Omega\ :\ x_1\geqslant 0\},
\end{align*}
it can easily be shown that the corresponding optimal centroids are given by
\begin{align*}
x_1(\z(s))=\begin{cases}
x_*\quad \text{if}\ s\in (0,1)\\
-x_*\quad \text{if}\ s\in (-1,0)
\end{cases}
\end{align*}
and
\begin{align*}
x_2(\z(s))=-x_1(\z(s))\quad \forall\, s\in (-1,0)\cup(0,1).
\end{align*}
Since $x_*\neq 0$, the map $\mathbf{x}\circ \z=\left(x_1\circ\z,x_2\circ\z\right)$ does not admit a continuous extension to the whole interval $(-1,1)$. Since $\z$ is continuous on $(-1,1)$, this means that the optimal centroid map $\mathbf{x}$ does not admit a continuous extension at the point $(0,0)\in\R^6$, precisely where the two seeds are the same.}
\end{rem}

\begin{rem}[Counterexample 2]\label{rem:nonconvex}
{\normalfont Corollary \ref{cor:diffVx} does not hold in general if $\Omega$ is non-convex. Indeed, let $\Omega\subset \R^3$ be the domain $(0,1)^3\setminus[1/2,1)^3$, let $\w=(0,0)$ and consider seed vectors given by the map $\z=(z_1,z_2):(-1/2,1/2)\to\R^6$ defined by
\begin{align*}
z_1(s)=(0,0,s),\qquad z_2(s)=(0,0,1+s).
\end{align*}
For $s\in (-1/2,1/2)$, the cell $C_1(\z(s),\w)$ is then given by
\begin{align*}
C_1(\z(s),\w)=\left\{x\in\Omega\ :\ x\cdot e_3\leq \frac{1}{2}+s\right\},
\end{align*}
and has volume
\begin{align*}
V_1(\z(s),\w)=
\begin{cases}
\frac{1}{2}+s\quad \text{if}\ s\in(-\frac{1}{2},0]\\
\frac{1}{2}+\frac{3}{4}s\quad \text{if}\ s\in(0,\frac{1}{2}).
\end{cases}
\end{align*}
The map $s\mapsto V_1(\z(s),\w)$ is not differentiable at $s=0$ which, since $\z$ is differentiable, implies that $V_1$ is not differentiable at the point $(\z(0),\w)$.

If $\Omega$ is open, bounded and convex, we can use Corollary \ref{cor:diffVx} to deduce that the optimal centroid map $\mathbf{x}$ is continuously differentiable and, therefore, locally Lipschitz. While it is known that the volume map $\mathbf{V}$ is Lipschitz in $\w$ even when the domain $\Omega$ is non-convex (see, for example \cite[Proposition 41]{merigot2021optimal}), to prove that $\mathbf{x}$ is locally Lipschitz for non-convex domains would require a finer analysis of the regularity of the centroid map $\obx$ and the volume map $\mathbf{V}$ with respect to $\z$.}
\end{rem}

We now prove that for any finite time horizon $T$, solutions of the ODE-IVP \eqref{eqn:ODE-IVP} on the interval $[0,T]$ are bounded and have bounded first derivatives. Our immediate application of these a priori estimates is to prove that such solutions exist (Proposition \ref{prop:ODEex}). As we show in Section \ref{sect:generalExistence}, these estimates also yield the necessary compactness of the corresponding sequence of measure-valued maps to pass to the limit as $N\to\infty$.

\begin{lem}[A priori estimates]\label{lem:ape}
If $\mathbf{z}=(z_1,...,z_N)$ is a $C^1$-solution of \eqref{eqn:ODE-IVP} on the interval $[0,T]$ then, for each $i\in\{1,...,N\}$,
\begin{align}
\label{eqn:ape1}|z_i(t)|&\leqslant  |\overline{z}_i|+ RT \quad &\forall\, t\in[0,T],\\
\label{eqn:ape2}|\dot{z}_i(t)|&\leqslant |\overline{z}_i|+ R(1+T)\quad &\forall\, t\in (0,T),
\end{align}
where $R=R(\Omega)>0$ is such that $\Omega\subset B_{R}(0)\subset \R^3$.
\end{lem}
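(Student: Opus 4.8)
The plan is to exploit two structural features of the vector field $W$ defined in \eqref{eqn:W}. First, each optimal centroid $x_i(\mathbf{z})$ is the barycentre of a subset of the convex set $\Omega\subset B_R(0)$, hence lies in $\overline{\Omega}$ and satisfies $|x_i(\mathbf{z})|\leqslant R$ uniformly. Second, the matrix $J$ of \eqref{eqn:J} satisfies both $|Jv|\leqslant |v|$ and, crucially, $v\cdot Jv=0$ for every $v\in\R^3$, since the upper-left $2\times 2$ block of $J$ is antisymmetric and its third row vanishes. These two facts are all that the argument requires.

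First I would prove the position bound \eqref{eqn:ape1}. Writing the $i$-th block of \eqref{eqn:ODE-IVP} as $\dot z_i=J(z_i-x_i(\mathbf{z}))$ and differentiating the $C^1$ function $t\mapsto |z_i(t)|^2$, I obtain
\[
\frac{d}{dt}|z_i(t)|^2 = 2z_i\cdot\dot z_i = 2\,z_i\cdot Jz_i - 2\,z_i\cdot Jx_i(\mathbf{z}) = -2\,z_i\cdot Jx_i(\mathbf{z}),
\]
where the term $z_i\cdot Jz_i$ vanishes by the orthogonality noted above. Estimating $-2\,z_i\cdot Jx_i(\mathbf{z})\leqslant 2|z_i|\,|x_i(\mathbf{z})|\leqslant 2R|z_i|$ then yields the differential inequality $\tfrac{d}{dt}|z_i|^2\leqslant 2R|z_i|$. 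The essential point is that cancelling $z_i\cdot Jz_i$ produces \emph{linear} rather than exponential growth: a naive Gr\"onwall estimate applied to the integral form of the ODE would only give a bound that is exponential in $T$, whereas the claimed estimate is linear in $T$.

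To pass from this inequality to the linear bound, I would regularise by considering $\psi_\varepsilon(t):=\sqrt{|z_i(t)|^2+\varepsilon}$ for $\varepsilon>0$, which is smooth and satisfies
\[
\psi_\varepsilon'(t)=\frac{1}{2\psi_\varepsilon(t)}\frac{d}{dt}|z_i(t)|^2\leqslant \frac{R|z_i(t)|}{\psi_\varepsilon(t)}\leqslant R,
\]
since $|z_i|\leqslant \psi_\varepsilon$. Integrating gives $\psi_\varepsilon(t)\leqslant \psi_\varepsilon(0)+Rt$, and letting $\varepsilon\to 0^+$ recovers $|z_i(t)|\leqslant |\overline{z}_i|+Rt\leqslant |\overline{z}_i|+RT$ for all $t\in[0,T]$, which is \eqref{eqn:ape1}. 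The regularisation is needed only to legitimately differentiate the norm at instants where $z_i(t)=0$; this is the sole technical wrinkle in the argument, and it is routine.

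Finally, the velocity bound \eqref{eqn:ape2} follows immediately by substituting \eqref{eqn:ape1} back into the ODE: for $t\in(0,T)$,
\[
|\dot z_i(t)|=|J(z_i(t)-x_i(\mathbf{z}(t)))|\leqslant |z_i(t)-x_i(\mathbf{z}(t))|\leqslant |z_i(t)|+|x_i(\mathbf{z}(t))|\leqslant (|\overline{z}_i|+RT)+R,
\]
using $|Jv|\leqslant |v|$ together with $|x_i(\mathbf{z})|\leqslant R$. I do not anticipate any genuine obstacle here: the whole estimate rests on the antisymmetry identity $z\cdot Jz=0$ and the uniform containment of the centroids in $B_R(0)$, both of which are immediate from the definitions, so the only thing one must be careful to get right is the differentiation of $|z_i|$ through its zeros.
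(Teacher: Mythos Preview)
Your argument is correct and follows the paper's proof essentially verbatim: both derive the differential inequality $\tfrac{d}{dt}|z_i|^2\leqslant 2R|z_i|$ from the skew-symmetry $z_i\cdot Jz_i=0$ and the centroid bound $|x_i(\mathbf{z})|\leqslant R$, and then deduce \eqref{eqn:ape2} by direct substitution. The only difference is cosmetic: to pass from the differential inequality to the linear bound $|z_i(t)|\leqslant|\overline{z}_i|+Rt$, the paper invokes a comparison lemma for the ODE $\dot y=2R\sqrt{y}$ (citing Szarski), whereas you carry out the equivalent step by hand via the $\varepsilon$-regularisation $\psi_\varepsilon=\sqrt{|z_i|^2+\varepsilon}$.
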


\begin{proof}
Since the matrix $J$ is skew symmetric and has operator norm $1$, for any $i\in\{1,...,N\}$ we have
\begin{align*}
\done{}{t}|z_i|^2=2z_i\cdot \dot{z}_i=-2z_i\cdot Jx_i(\z)\leqslant 2R|z_i|
\end{align*}
on $[0,T]$. The maximal solution of the ODE-IVP
\begin{align*}
\begin{cases}
\done{y}{t}=2Ry^{1/2}\\
y(0)=|\oz_i|^2
\end{cases}
\end{align*}
on the interval $[0,T]$ is given by
\begin{align*}
y(t)=(|\oz_i|+Rt)^2.
\end{align*}
(Note that this is the unique solution unless $\oz_i=0$.) Applying a comparison lemma such as \cite[Theorem 13.2]{szarski1965differential} then establishes \eqref{eqn:ape1}. Hence,
\begin{align*}
|\dot{z}_i|=| J(z_i-x_i(\z))| \leqslant |z_i|+ |x_i(\z)| \leqslant |\oz_i|+ RT  +R
\end{align*}
on $[0,T]$, which establishes \eqref{eqn:ape2}.
\end{proof}

Using the regularity results and the a priori estimates established above, we now prove Proposition \ref{prop:ODEex}.

\begin{proof}[Proof of Proposition \ref{prop:ODEex}]
Combining Corollary \ref{cor:diffVx} with Lemma \ref{lem:diffOWM} we see that the optimal centroid map $\mathbf{x}$ is continuously differentiable on its domain $D$. The map $W$ is, therefore, locally Lipschitz on $D$. Now, let $\obz=(\oz_1,...,\oz_N)\in D$
satisfy \eqref{eqn:wellPrep}, let $\overline{M}>0$ be such that
\begin{align*}
\max_i\{|\overline{z}_i|\}<\overline{M}
\end{align*}
and let $T\in (0,\infty)$.  Then
\begin{align*}
\overline{B_{\frac{r}{2}}(\obz)}\subset D
\end{align*}
and, for all $\z\in B_{\frac{r}{2}}(\obz)$,
\begin{align*}
|W(\z)|&\leq |\z|+|\mathbf{x}(\z)|
\leq \frac{r}{2}+|\obz|+|\mathbf{x}(\z)|
\leq\frac{r}{2}+N^{\frac{1}{2}}\left(\overline{M}+R\right)\\
&\leq\frac{r}{2}+N^{\frac{1}{2}}\left(\overline{M}+R(1+T)\right)=:M.
\end{align*}
Let
\begin{align*}
T_0=\min\left\{T,\frac{r}{2M}\right\}.
\end{align*}
By the Picard-Lindel\"{o}f Existence Theorem (see, for example, \cite[Theorem 2.2]{teschl2012ordinary}) there exists a unique $C^1$-solution of \eqref{eqn:ODE-IVP} on the interval $[0,T_0]$.

Since the third row of the matrix $J$ is zero, by \eqref{eqn:wellPrep}
\begin{align*}
\min_{i\neq j}\vert(z_i(T_0)-z_j(T_0))\cdot e_3\vert\geqslant r.
\end{align*}
Hence
\begin{align*}
\overline{B_{\frac{r}{2}}(\z(T_0))}\subset D.
\end{align*}
By the a priori estimate \eqref{eqn:ape1} proved in Lemma \ref{lem:ape}, for all $\z\in B_{\frac{r}{2}}(\z(T_0))$,
\begin{align*}
|W(\z)|&\leq |\z|+|\mathbf{x}(\z)|
\leq \frac{r}{2}+|\z(T_0)|+|\mathbf{x}(\z)|
\leq\frac{r}{2}+N^{\frac{1}{2}}\left(\overline{M}+R(1+T_0)\right)\\
&\leq\frac{r}{2}+N^{\frac{1}{2}}\left(\overline{M}+R(1+T)\right)=M.
\end{align*}
Applying the Picard-Lindel\"{o}f Existence Theorem again we conclude that there exists a unique $C^1$-solution of \eqref{eqn:ODE-IVP} on the interval $[0,\min\{T,2T_0\}]$. Repeating this process, we obtain a unique $C^1$-solution $\mathbf{z}$ of \eqref{eqn:ODE-IVP} on the interval $[0,T]$. Since the function $W$ is $C^1$ on $\mathbf{z}([0,T])$, $\mathbf{z}$ is in fact $C^2$.
\end{proof}

To conclude this section we prove Theorem \ref{thm:1}. We begin by proving that any continuously differentiable discrete geostrophic solution is energy-conserving.

\begin{lem}\label{lem:discEnergyCons}
Any continuously differentiable discrete geostrophic solution is energy-conserving.
\end{lem}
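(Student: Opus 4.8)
The plan is to reduce the conservation of geostrophic energy to two separate conservation statements by exploiting the identity for $E$ recorded in Remark~\ref{rem:energy}. For a continuously differentiable discrete geostrophic solution $\alpha_t=\sum_{i=1}^N m_i\delta_{z_i(t)}$, whose seeds $\z=(z_1,\dots,z_N)$ satisfy the ODE-IVP \eqref{eqn:ODE-IVP} by Lemma~\ref{lem:odeChar}, equation \eqref{eqn:energy} gives
\[
E[\alpha_t]=\tfrac12 W_2^2\!\left(\mathcal{L}^3\mres\Omega,\alpha_t\right)-\tfrac12\|\partial_3\mathcal{B}[\alpha_t]\|_{L^2(\Omega)}^2-\tfrac12\int_\Omega x_3^2\,\rd x .
\]
The last term is independent of time, so it suffices to show that the transport cost $\tfrac12 W_2^2(\mathcal{L}^3\mres\Omega,\alpha_t)$ and the quantity $\|\partial_3\mathcal{B}[\alpha_t]\|_{L^2(\Omega)}^2$ are each constant along the solution.

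The second quantity is the easier of the two. Since $\nabla\mathcal{B}[\alpha_t]=\sum_{i=1}^N z_i\mathds{1}_{C_i(\z(t),\w_*(\z(t)))}$, its third component equals $z_{i,3}(t)$ on the cell $C_i$, so that $\|\partial_3\mathcal{B}[\alpha_t]\|_{L^2(\Omega)}^2=\sum_{i=1}^N m_i z_{i,3}(t)^2$. Because the third row of $J$ vanishes, the third component of $\dot z_i=J(z_i-x_i(\z))$ is zero; hence each $z_{i,3}$ is constant in time and so is $\sum_i m_i z_{i,3}^2$. (This is the same mechanism by which the well-preparedness condition \eqref{eqn:wellPrep} is propagated, as noted before Proposition~\ref{prop:ODEex}.)

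For the transport cost, define $\mathcal{T}(\z):=W_2^2\big(\mathcal{L}^3\mres\Omega,\sum_i m_i\delta_{z_i}\big)=\sum_i\int_{C_i(\z,\w_*(\z))}|x-z_i|^2\,\rd x$. The central claim is that $\mathcal{T}$ is continuously differentiable on $D$ with
\[
\nabla_{z_i}\mathcal{T}(\z)=2m_i\big(z_i-x_i(\z)\big),\qquad i\in\{1,\dots,N\}.
\]
To establish this I would regroup the Kantorovich functional \eqref{eq:g} as $\mathcal{T}(\z)=g(\z,\w_*(\z))$, where $g(\z,\w)=\int_\Omega\min_j\{|x-z_j|^2-w_j\}\,\rd x+\sum_j m_j w_j$. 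By Lemma~\ref{lem:diffOWM} the optimal weight map $\w_*$ is $C^1$, and $g$ is $C^1$ in $\z$; differentiating the pointwise minimum under the integral sign (the set where the minimum is attained by more than one index lies in finitely many hyperplanes and is therefore Lebesgue-null) yields $\partial_{z_i}g(\z,\w)=\int_{C_i}-2(x-z_i)\,\rd x=2m_i(z_i-x_i(\z))$. Since $\w_*(\z)$ maximises $g(\z,\cdot)$, the weight-gradient $\partial_{\w}g=\mathbf{m}-\mathbf{V}$ vanishes there by \eqref{eqn:OWVchar}, so the chain rule collapses to $\nabla_{z_i}\mathcal{T}=\partial_{z_i}g|_{\w=\w_*}$, proving the claim. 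Differentiating along the solution and using \eqref{eqn:W},
\[
\frac{\rd}{\rd t}\mathcal{T}(\z(t))=\sum_{i=1}^N\nabla_{z_i}\mathcal{T}\cdot\dot z_i=\sum_{i=1}^N 2m_i\,(z_i-x_i)\cdot J(z_i-x_i)=0,
\]
because $J$ is skew-symmetric and hence $v\cdot Jv=0$ for every $v\in\R^3$. Combining the two conservation statements with the displayed identity for $E$ shows $E[\alpha_t]$ is constant.

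I expect the envelope identity $\nabla_{z_i}\mathcal{T}(\z)=2m_i(z_i-x_i(\z))$ to be the main obstacle: the rest is algebra and skew-symmetry, but this step requires justifying the interchange of differentiation and integration for the min-representation of $g$ and invoking the envelope theorem, both of which rest on the $C^1$-regularity of $\w_*$ (Lemma~\ref{lem:diffOWM}) and of the cell integrals (Corollary~\ref{cor:diffVx}). As an immediate by-product, the conservation of $\mathcal{T}$ established here is precisely the conservation of transport cost asserted in Remark~\ref{rem:ConsTransCost}.
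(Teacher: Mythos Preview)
Your proof is correct and follows the same overall decomposition as the paper: split $E[\alpha_t]$ via \eqref{eqn:energy} into the transport cost, the third-component term $\sum_i m_i z_{i,3}^2$, and a constant, then show each is conserved. Your treatment of the second and third terms is identical to the paper's.

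For the transport cost the paper takes a slightly more direct route. It writes the cost as $\int_\Omega \zeta(x,t)\,\rd x$ with $\zeta(x,t)=\tfrac12\sum_i\mathds{1}_{\widetilde C_i(t)}(x)|x-z_i(t)|^2$, observes that for $\mathcal{L}^3$-a.e.\ $x$ the cell index is locally constant in $t$ (cell boundaries are null sets moving continuously with $\z$ and $\w_*(\z)$), differentiates $\zeta$ pointwise in $t$, and passes the derivative under the integral via dominated convergence to obtain $-\sum_i m_i(x_i(\z)-z_i)\cdot J(z_i-x_i(\z))=0$. You instead compute the spatial gradient $\nabla_{z_i}\mathcal{T}(\z)=2m_i(z_i-x_i(\z))$ by an envelope argument applied to the Kantorovich functional $g(\z,\w)$, using that $\partial_\w g$ vanishes at $\w_*(\z)$ and that $\w_*$ is $C^1$ by Lemma~\ref{lem:diffOWM}, and then chain-rule in $t$. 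Both arguments rest on the same differentiation-under-the-integral step for the min-representation and terminate in the same skew-symmetry cancellation; yours makes the dual structure explicit and delivers the gradient formula for $\mathcal{T}$ as a reusable by-product (indeed the conservation of transport cost in Remark~\ref{rem:ConsTransCost}), while the paper's avoids invoking the envelope theorem and works directly in the time variable.
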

\begin{proof}
Suppose that $\alpha\in C([0,T];\mathcal{P}_c(\R^3))$ is a continuously differentiable discrete geostrophic solution with initial measure 
\begin{align*}
\overline{\alpha}=\sum_{i=1}^Nm_i\delta_{\overline{z}_i}\in\mathcal{Q}^N(\R^3).
\end{align*}
By Lemma \ref{lem:odeChar}, $\alpha$ is then given by
\begin{align*}
\alpha_t=\sum_{i=1}^Nm_i\delta_{z_i(t)}\qquad \forall\, t\in [0,T],
\end{align*}
where, for each $i\in\{1,...,N\}$,
\begin{align}\label{eqn:energyODEIVP}
\begin{cases}
\dot{z}_i=J(z_i-x_i(\z)),\\
z_i(0)=\overline{z}_i.
\end{cases}
\end{align}
Moreover, letting $\widetilde{C}_i(t)=C_i(\z_i(t),\w_*(\z(t)))$ for $t\in[0,T]$, we have
\begin{align*}
\nabla\mathcal{B}[\alpha_t]=\sum_{i=1}^Nz_i(t)\mathds{1}_{\widetilde{C}_i(t)}.
\end{align*}
Then, by equation \eqref{eqn:energy},
\begin{align}\label{eqn:energyDiscrete}
E[\alpha_t]=\frac{1}{2}\sum_{i=1}^N\int_{\widetilde{C}_i(t)}|x-z_i(t)|^2\,\rd x-\frac{1}{2}\sum_{i=1}^Nm_i(z_i(t)\cdot e_3)^2-\frac{1}{2}\int_{\Omega}x_3^2\,\rd x.
\end{align}

We now show that the function $t\mapsto E[\alpha_t]$ is differentiable on the interval $(0,T)$ and that its derivative is zero, from which it follows that $\alpha$ is energy-conserving. Trivially, the third term of \eqref{eqn:energyDiscrete} is constant in time. By \eqref{eqn:energyODEIVP}, since the third row of the matrix $J$ is zero,
\begin{align*}
\done{}{t}(z_i(t)\cdot e_3)=0\quad \forall\,i\in\{1,...,N\},\, t\in(0,T).
\end{align*}
Therefore the time derivative of the second term of \eqref{eqn:energyDiscrete} is also zero on $(0,T)$. Now define the function $\zeta:\Omega\times[0,T]\to\R$ by
\begin{align*}
\zeta(x,t):=\frac{1}{2}\sum_{i=1}^N\mathds{1}_{\widetilde{C}_i(t)}(x)|x-z_i(t)|^2,
\end{align*}
so that the first term of \eqref{eqn:energyDiscrete} is precisely $\int_{\Omega}\zeta(x,t)\,\rd x$. Observe that for any fixed $s\in(0,T)$, the function $t\mapsto\zeta(x,t)$ is continuously differentiable at $s$ for $\mathcal{L}^3$-almost every $x$ in $\Omega$. Indeed, since the set $\partial \widetilde{C}_i(s)\cap \partial \widetilde{C}_j(s)$ is contained in a $2$-dimensional plane for each $i\neq j$ (see Section \ref{sect:SDOT}),
\begin{align*}
\mathcal{L}^3\left(\Omega\setminus \bigcup_{i=1}^N\mathrm{Int}\left({\widetilde{C}}_i(s)\right)\right)=\mathcal{L}^3\left(\bigcup_{i\neq j}\left(\partial \widetilde{C}_i(s)\cap \partial \widetilde{C}_j(s)\right)\cup \partial\Omega\right)=0.
\end{align*}
For $x$ in the interior of the cell $\widetilde{C}_i(t)$
\begin{align*}
\partial_t\zeta(x,s)&=-(x-z_i(s))\cdot \dot{z}_i(s)
=-(x-z_i(s))\cdot J(z_i(s)-x_i(\z(s))).
\end{align*}
By hypothesis, $\z(s)\in D$, so continuity at $s$ of the map $t\mapsto\partial_t\zeta(x,t)$ follows from Corollary \ref{cor:diffVx} and Lemma \ref{lem:diffOWM}. 
By Lemma \ref{lem:ape}, $\partial_t\zeta(\cdot,s)$ is uniformly bounded on $\Omega$. Hence, by the Mean Value Theorem and the Dominated Convergence Theorem, we may exchange the order of differentiation and integration and, using \eqref{eqn:energyODEIVP} and the skew-symmetry of the matrix $J$, we obtain
\begin{align*}
\done{}{t}\left(\int_{\Omega}\zeta(x,t)\,\rd x\right)\bigg\vert_{t=s}
&=\int_{\Omega}\partial_t\zeta(x,s)\,\rd x
=-\sum_{i=1}^N\int_{\widetilde{C}_i(s)}(x-z_i(s))\cdot \dot{z}_i(s)\,\rd x\\
&=-\sum_{i=1}^Nm_i\big(x_i(\z(s))-z_i(s)\big)\cdot J\big(z_i(s)-x_i(\z(s))\big)\,\rd x=0,
\end{align*}
which completes the proof.
\end{proof}

\begin{proof}[Proof of Theorem \ref{thm:1}]
Since $\overline{\alpha}$ is well-prepared in the sense of Definition \ref{defn:wellPrep}, by Proposition \ref{prop:ODEex} the corresponding ODE-IVP \eqref{eqn:ODE-IVP} has a unique $C^2$-solution. By Lemma \ref{lem:odeChar}, this gives rise to a unique twice continuously differentiable discrete geostrophic solution with initial measure $\overline{\alpha}$ via the formula \eqref{eqn:ansatzLem}. This solution is energy-conserving by Lemma \ref{lem:discEnergyCons}.
\end{proof}

\section{Proof of Theorem \ref{thm:2}}\label{sect:generalExistence}

We now construct a geostrophic solution with arbitrary initial measure $\overline{\alpha}\in \mathcal{P}_c(\R^3)$. We begin by proving the existence of a sequence $(\overline{\alpha}^N)_{N\in\N}$ of \emph{well-prepared} discrete probability measures (see Definition \ref{defn:wellPrep})  converging to $\overline{\alpha}$ with respect to the Wasserstein 2-distance (Lemma \ref{lem:construction}). The existence of a sequence of discrete geostrophic solutions $\alpha^N$ with initial measures $\overline{\alpha}^N$, respectively, is then guaranteed by Theorem \ref{thm:1}. To obtain a geostrophic solution with initial measure $\overline{\alpha}$, we then apply the Arzel\`a-Ascoli Compactness Theorem combined with the continuity of $\mathcal{B}$ (Theorem \ref{thm:quantStab}) and pass to the limit as $N\to\infty$.

\begin{lem}\label{lem:construction}
Let $\beta\in\mathcal{P}_c(\R^3)$. There exists a compact set $U\subset \R^3$, a sequence of discrete probability measures $\beta^N\in \mathcal{Q}^N(\R^3)$ given by
\begin{align*}
\beta^N=\sum_{i=1}^Nm^N_i\delta_{z^N_i}
\end{align*}
and a sequence of positive real numbers $r_N>0$ such that
\begin{align}
\label{eqn:betaSupp}&\mathrm{supp}(\beta^N)\subset U\quad \forall\, N\in\N,\\
\label{eqn:betaNLimit}&\underset{N\to\infty}\lim W_2(\beta^N,\beta)=0,
\end{align}
and
\begin{align}\label{eqn:LB}
|(z^N_i-z^N_j)\cdot e_3|\geqslant r_N \quad \forall\, i\neq j.
\end{align}
\end{lem}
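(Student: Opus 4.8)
The plan is to combine the standard quantization of a compactly supported measure with two elementary perturbations: one that fixes the number of atoms at exactly $N$, and one that pushes the seeds into distinct horizontal planes. The workhorse estimate throughout is that if $\nu=\sum_i m_i\delta_{a_i}$ and $\nu'=\sum_i m_i\delta_{a_i'}$ share the same masses, then the diagonal coupling gives $W_2(\nu,\nu')^2\leq\sum_i m_i|a_i-a_i'|^2$; in particular, moving every atom a distance at most $\eta$ (or splitting an atom into two points a distance at most $\eta$ apart) changes $W_2$ by at most $\eta$.

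First I would enclose $\mathrm{supp}(\beta)$ in a half-open cube $Q_0=[-R,R)^3$ with $\beta(Q_0)=1$ and, for each integer $n\geq0$, partition $Q_0$ into $8^n$ congruent dyadic subcubes. Placing at the centre of each subcube $Q$ with $\beta(Q)>0$ an atom of mass $\beta(Q)$ yields a discrete measure $\mu_n$, supported in $Q_0$, with $M_n$ atoms and $W_2(\mu_n,\beta)\leq\sqrt{3}\,R\,2^{-n}\to0$, since the coupling keeping mass inside each subcube moves no point further than a subcube diameter. The sequence $(M_n)$ is nondecreasing with $M_0=1$.

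Next I would correct the atom count. For each $N\geq1$ set $n(N):=\max\{\,n\in\{0,\dots,N\}:M_n\leq N\,\}$, which is well defined because $M_0=1$; a brief case distinction (according to whether $\beta$ has finitely or infinitely many atoms) shows that $n(N)$ is chosen so that $W_2(\mu_{n(N)},\beta)\to0$ while $M_{n(N)}\leq N$. Starting from $\mu_{n(N)}$, I bring the number of atoms up to exactly $N$ by repeated \emph{atom-splitting}: replacing an atom $m\,\delta_a$ by $\tfrac m2\delta_a+\tfrac m2\delta_{a'}$ with $a'$ a distinct point within distance $\eta$ of $a$ raises the atom count by one at $W_2$-cost at most $\eta$. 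Carrying out the required $N-M_{n(N)}\leq N$ splits with $\eta=\eta_N:=N^{-2}$ produces $\beta^N_0=\sum_{i=1}^N m_i^N\delta_{a_i^N}\in\mathcal{Q}^N(\R^3)$, supported in a fixed compact neighbourhood of $Q_0$, with $W_2(\beta^N_0,\beta)\leq W_2(\mu_{n(N)},\beta)+N\eta_N\to0$.

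Finally I would enforce well-preparedness. Given the seeds $a_1^N,\dots,a_N^N$ and a tolerance $\delta_N\downarrow0$, I choose reals $t_1,\dots,t_N\in[0,\delta_N]$ so that the $N$ values $a_i^N\!\cdot\! e_3+t_i$ are pairwise distinct, which is possible since at each step only finitely many values must be avoided. Setting $z_i^N:=a_i^N+t_i e_3$ and $\beta^N:=\sum_{i=1}^N m_i^N\delta_{z_i^N}$, the seeds lie in distinct horizontal planes, so they are distinct in $\R^3$, giving $\beta^N\in\mathcal{Q}^N(\R^3)$, and $r_N:=\min_{i\neq j}|(z_i^N-z_j^N)\!\cdot\! e_3|>0$ realises \eqref{eqn:LB}. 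The vertical shift costs $W_2(\beta^N,\beta^N_0)\leq\delta_N\to0$, so \eqref{eqn:betaNLimit} follows, and all seeds remain in the fixed compact set $U:=\{x\in\R^3:\mathrm{dist}(x,Q_0)\leq1\}$, giving \eqref{eqn:betaSupp}. The only genuinely SG-specific ingredient is this last vertical perturbation; the quantization is standard, and the one real nuisance is the exact matching of the atom count, which the splitting procedure handles. I expect that step—tracking the count while keeping the $W_2$-error under control—to be the most delicate piece of bookkeeping, though it presents no conceptual difficulty.
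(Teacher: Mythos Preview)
Your argument is correct. The paper's proof is considerably shorter because it outsources the quantization step: it simply invokes the quantization literature (Graf--Luschgy) for the existence of discrete measures $\tilde\beta^N\in\mathcal{Q}^N(\R^3)$ with support in a fixed compact set and $W_2(\tilde\beta^N,\beta)\to0$, and then perturbs each seed by at most $1/N$ to force distinct third coordinates, exactly as in your final step. Your approach instead builds the quantizers by hand via dyadic partitioning, which is more elementary and self-contained but requires the atom-splitting bookkeeping to hit the target count $N$ exactly---this is the part you correctly flag as the main nuisance, and it is precisely what the paper avoids by citation. One minor imprecision: your case distinction should be phrased as ``$\mathrm{supp}(\beta)$ finite versus infinite'' rather than ``finitely versus infinitely many atoms'', since a measure with a continuous part and no atoms also has $M_n\to\infty$; the argument goes through unchanged with this correction. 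The SG-specific content---the vertical shift to achieve well-preparedness---is essentially identical in both proofs.
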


\begin{proof}
Take a compact set $U \subset \mathbb{R}^3$ such that $\text{supp}(\beta)\subset U$ and a sequence of discrete probability measures 
\begin{align*}
\tilde{\beta}^N=\sum_{i=1}^Nm^N_i\delta_{\tilde{z}^N_i} \in\mathcal{Q}^N(\R^3),
\end{align*}
whose support is contained in $U$, such that
\begin{align}
\label{eq:QuantError}
\underset{N\to\infty}\lim W_2(\tilde{\beta}^N,\beta)=0.
\end{align}
Such a sequence exists by for example \cite[Lemma 10]{merigot2021optimal}.

For each $N \in \mathbb{N}$, let $z_i^N \in U$, $i \in \{1,\ldots,N\}$, satisfy the following: 
$(z_i^N-z_j^N)\cdot e_3 \ne 0$ for all $i,j\in\{1,...,N\}$, $i \ne j$; $z_i^N = \mathrm{argmin}_{z \in \{z_1^N,\ldots,z_N^N\}} |z - \tilde{z}^N_i|$
for all $i \in \{1,\ldots,N\}$; $|z_i^N - \tilde{z}^N_i|<1/N$ for all $i \in \{1,\ldots,N\}$. 
 Define 
\begin{align*}
\beta^N:=\sum_{i=1}^Nm_i^N\delta_{z_i^N}.
\end{align*}
Then
\begin{equation}
\label{eq:PertError}
W^2_2(\beta^N,\tilde{\beta}^N) = \sum_{i=1}^N m_i^N |z_i^N - \tilde{z}^N_i|^2 \le \frac{1}{N^2}.
\end{equation}
Combining \eqref{eq:QuantError} and \eqref{eq:PertError} completes the proof.
\end{proof}

Now fix $T\in (0,\infty)$ and $\overline{\alpha}\in\mathcal{P}_c(\R^3)$. By Lemma \ref{lem:construction} there exists a sequence of \emph{well-prepared} discrete probability measures $\overline{\alpha}^N\in\mathcal{Q}^N(\R^3)$, given by
\begin{align*}
\overline{\alpha}^N:=\sum_{i=1}^Nm^N_i\delta_{\overline{z}^N_i},
\end{align*}
and a positive constant $\overline{M}>0$ such that
\begin{align*}
\underset{N\to\infty}\lim W_2(\overline{\alpha}^N,\overline{\alpha})=0
\qquad
\text{and}
\qquad
\underset{N\in\N}{\bigcup}\text{supp}(\overline{\alpha}^N)\subset B_{\overline{M}}(0).
\end{align*}
By Theorem \ref{thm:1}, for each $N\in\N$ there exists a unique twice continuously differentiable discrete geostrophic solution $\alpha^N\in C([0,T];\mathcal{P}_c(\R^3))$ with initial measure $\alpha^N_0=\overline{\alpha}^N$, given by
\begin{align*}
\alpha^N_t=\sum^N_{i=1}m^N_i\delta_{z^N_i(t)} \quad \forall\, t\in [0,T],
\end{align*}
where $\z^N=(z^N_1,...,z^N_N):[0,T]\to\R^{3N}$ is a twice continuously differentiable solution of the ODE-IVP \eqref{eqn:ODE-IVP} with initial condition
\begin{align*}
\z^N(0)=(\oz_1^N,...,\oz_N^N).
\end{align*}

\begin{lem}[{\normalfont Compactness, c.f. \cite[Lemma 2.5]{loeper2006fully}, \cite[Theorem 3.2]{feldman2013lagrangian}}]\label{lem:compactness}
The sequence $(\alpha^N)_{N\in\N}$ has a uniformly convergent subsequence in $C\left([0,T];\mathcal{P}_{c}(\mathbb{R}^3)\right)$.
In particular, there exists a strictly increasing function $k:\mathbb{N}\to\mathbb{N}$ and a Lipschitz map $\alpha\in C^{0,1}\left([0,T];\mathcal{P}_{c}(\mathbb{R}^3)\right)$ such that
\begin{align*}
\underset{N\to\infty}\lim\sup_{t\in[0,T]}
W_2(\alpha_t^{k(N)},\alpha_t)=0.
\end{align*}
Moreover, there exists $R_1>0$ such that for all $t\in [0,T]$ and all $N\in\mathbb{N}$
\begin{align*}
\mathrm{supp}(\alpha^N_t),\, \mathrm{supp}(\alpha_t)\subset K:=\overline{B_{R_1}(0)}.
\end{align*}	
\end{lem}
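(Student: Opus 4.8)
The plan is to apply the Arzel\`a-Ascoli Compactness Theorem to the sequence of maps $t\mapsto\alpha^N_t$, viewed as elements of $C([0,T];(\mathcal{P}(K),W_2))$ for a suitable common compact set $K$. To do so I must (i) produce a compact set $K\subset\R^3$ containing the supports of every $\alpha^N_t$, so that all the maps take values in the \emph{same} metric space $(\mathcal{P}(K),W_2)$, and (ii) show that the family $\{t\mapsto\alpha^N_t\}_{N\in\N}$ is uniformly equicontinuous with respect to $W_2$. Because $K$ is compact, $(\mathcal{P}(K),W_2)$ is itself a compact metric space, so pointwise relative compactness is automatic and equicontinuity alone will deliver a uniformly convergent subsequence.

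For (i), recall that by construction $\bigcup_{N}\operatorname{supp}(\overline{\alpha}^N)\subset B_{\overline{M}}(0)$, so $|\overline{z}^N_i|<\overline{M}$ for every $i$ and $N$. The a priori estimate \eqref{eqn:ape1} of Lemma \ref{lem:ape} then gives $|z^N_i(t)|\leqslant|\overline{z}^N_i|+RT<\overline{M}+RT$ for all $t\in[0,T]$, $i\in\{1,\dots,N\}$ and $N\in\N$. Hence, setting $R_1:=\overline{M}+RT$ and $K:=\overline{B_{R_1}(0)}$, we have $\operatorname{supp}(\alpha^N_t)\subset K$ for all $t$ and $N$, as claimed.

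For (ii), fix $s,t\in[0,T]$ and $N\in\N$. The coupling that transports, for each $i$, the mass $m^N_i$ located at $z^N_i(s)$ to the point $z^N_i(t)$ is admissible between $\alpha^N_s$ and $\alpha^N_t$, so
\begin{align*}
W_2(\alpha^N_s,\alpha^N_t)^2\leqslant\sum_{i=1}^N m^N_i\,|z^N_i(s)-z^N_i(t)|^2.
\end{align*}
By the Fundamental Theorem of Calculus and the velocity bound \eqref{eqn:ape2}, $|z^N_i(s)-z^N_i(t)|\leqslant(\overline{M}+R(1+T))\,|s-t|$ uniformly in $i$ and $N$; since $\sum_i m^N_i=1$ this yields $W_2(\alpha^N_s,\alpha^N_t)\leqslant L\,|s-t|$ with $L:=\overline{M}+R(1+T)$. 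Thus the maps $t\mapsto\alpha^N_t$ are uniformly Lipschitz in time, and in particular uniformly equicontinuous.

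With (i) and (ii) in hand, the Arzel\`a-Ascoli Theorem for maps into the compact metric space $(\mathcal{P}(K),W_2)$ provides a strictly increasing $k:\N\to\N$ and a limit $\alpha\in C([0,T];(\mathcal{P}(K),W_2))$ with $\sup_{t\in[0,T]}W_2(\alpha^{k(N)}_t,\alpha_t)\to0$. Since $\mathcal{P}(K)$ is closed in $\mathcal{P}_c(\R^3)$, each $\alpha_t$ is supported in $K$, and $\alpha\in C([0,T];\mathcal{P}_c(\R^3))$, which completes the proof. The only point requiring care is casting the argument in the correct metric-space-valued form of Arzel\`a-Ascoli; once the uniform support bound and the uniform-in-$N$ Lipschitz estimate are established, the conclusion is immediate, so I expect no substantial obstacle beyond this bookkeeping.
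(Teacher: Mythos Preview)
Your proof is correct and follows essentially the same strategy as the paper: establish the uniform support bound via the a priori estimate \eqref{eqn:ape1}, prove uniform equicontinuity via \eqref{eqn:ape2}, and invoke Arzel\`a--Ascoli on the compact metric space $(\mathcal{P}(K),W_2)$. The only difference is in the equicontinuity step: the paper passes to the equivalent metric $W_1$ and uses Kantorovich--Rubinstein duality to bound $W_1(\alpha^N_s,\alpha^N_t)\leqslant L|s-t|$, whereas you work directly in $W_2$ by exhibiting the explicit coupling $\sum_i m^N_i\delta_{(z^N_i(s),z^N_i(t))}$; your route is slightly more direct and avoids the detour through $W_1$.
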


\begin{proof}
The a priori estimates \eqref{eqn:ape1} on the paths $z^N_i$ mean that the metric space 
$(\mathcal{P}_{c}(\mathbb{R}^3),W_2)$ can be replaced by the compact metric space
$(\mathcal{P}(K),W_2)$,
where $K\subset\R^3$ is the closed ball of radius $R_1:=\overline{M}+R(\Omega)T$ centred at the origin. Since $K$ is compact, $W_1$ and $W_2$ are equivalent metrics on $\mathcal{P}(K)$ (see \cite[p.179]{santambrogio2015optimal}), so it is sufficient to prove that 
 $(\alpha^N)_{N\in\mathbb{N}}$
has a uniformly convergent subsequence in $C\left([0,T];(\mathcal{P}(K),W_1)\right)$ whose limit point is Lipschitz with respect to the $W_1$ metric. Moerover, since the arrival space $(\mathcal{P}(K),W_1)$ is a compact metric space, by the Ascoli-Arzel\'{a} Theorem (see for example \cite[p.10, Box 1.7]{santambrogio2015optimal}), such a sequence exists if and only if for every $\varepsilon > 0$, there exists $\delta(\varepsilon) >0$ such that, whenever $s,t \in [0,T]$ and $|t-s|<\delta(\varepsilon)$, $W_1(\alpha_t^{N},\alpha_s^{N})<\varepsilon$ for all $N$. 

For $\mu,\,\nu\in\mathcal{P}(K)$,
\begin{align}\label{eqn:W1char}
W_1(\mu,\nu) =
\sup \left\{
\int_K \phi \,   \rd(\mu-\nu) \
\Big\vert\ \phi: K \to \mathbb{R},\ \phi \text{ is 1-Lipschitz}
\right\}.
\end{align}
(See for example \cite[Theorem 1.14]{villani2003topics}.) Let $\phi: K \to \mathbb{R}$ be 1-Lipschitz and let $t,s\in[0,T]$. By the a priori estimate \eqref{eqn:ape2} we have
\begin{align*}
\int_K \phi \,   \rd(\alpha_t^{N}-\alpha_s^{N}) 
&=
\sum_{i=1}^Nm^N_i\Big(\phi\big(z^N_i(t)\big)-\phi\big(z^N_i(s)\big)\Big)
\\
& \le \sum_{i=1}^Nm^N_i \left| z^N_i(t) - z^N_i(s)\right|
\\
& \le \sum_{i=1}^Nm^N_i L |t-s|
\\
& = L |t-s|,
\end{align*}
where $L=\overline{M}+R(1+T)$. Using the characterisation of the Wasserstein 1-distance given by \eqref{eqn:W1char} we obtain
\begin{align}\label{eqn:alphaNLipEst}
W_1(\alpha^N_t,\alpha^N_s)\leq L|t-s|.
\end{align}
Choosing $\delta(\varepsilon)=\varepsilon/L$, the Ascoli-Arzel\'{a} Theorem guarantees the existence of a uniformly convergent subsequence $(\alpha^{k(N)})_{N\in\N}$. Denote by $\alpha$ its limit point. Combining the Lipschitz estimate \eqref{eqn:alphaNLipEst} and using the triangle inequality in $(\mathcal{P}(K),W_1)$, for all $N\in\N$ we have
\begin{align*}
W_1(\alpha_t,\alpha_s)\leq W_1(\alpha_t,\alpha_t^{k(N)})+W_1(\alpha_s,\alpha_s^{k(N)})+L|t-s|.
\end{align*}
Passing to the limit as $N\to\infty$, we see that $\alpha$ is Lipschitz with respect to $W_1$ as required.
\end{proof}

We conclude this section with the proof of Theorem \ref{thm:2}.

\begin{proof}[Proof of Theorem \ref{thm:2}]
Let $\alpha\in C^{0,1}([0,T];\mathcal{P}_c(\R^3))$ be the limit point of the sequence $(\alpha^{k(N)})_{N\in\N}$ obtained in Lemma \ref{lem:compactness}. First, we prove that $\alpha$ satisfies the transport equation \eqref{eqn:SGGdist}. For clarity, given $\beta\in C([0,T];\mathcal{P}(K))$ and $\varphi\in \mathscr{D}(\R^3\times\R)$ we define
\begin{align*}
\mathcal{F}[\beta,\varphi]:=
&\int_0^{T}\int_{K}\big(\partial_t \varphi(z,t)+Jz\cdot\nabla \varphi(z,t)\big)\,\rd\alpha_t(z)\,\rd t-\int_0^{T}\int_{\Omega}J x\cdot\nabla \varphi\big(\nabla \mathcal{B}[\alpha_t](x),t\big)\,\rd x\,\rd t\\
\nonumber&-\left(\int_{K}\varphi(z,T)\,\rd\alpha_{T}(z)-\int_{K}\varphi(z,0)\,\rd\alpha_{0}(z)\right).
\end{align*}
Since the space
\begin{align*}
\widetilde{\mathscr{D}}:=\{\varphi=\phi\psi\ \vert\ \phi\in C^{\infty}_c(\R),\ \psi\in C^{\infty}_c(\R^3)\}
\end{align*}
is a dense subspace of $\mathscr{D}(\R^3\times\R)$ (see \cite{friedlander1998introduction}), to show that $\alpha$ satisfies \eqref{eqn:SGGdist} it is enough to check that $\mathcal{F}[\alpha,\varphi]=0$ for any $\varphi\in \widetilde{\mathscr{D}}$. 
Moreover, for each $N\in\N$ we have
\begin{align*}
\mathcal{F}[\alpha^N,\varphi]&=0\qquad \forall\, \varphi\in \widetilde{\mathscr{D}}.
\end{align*}
By the triangle inequality, it is therefore sufficient to show that
\begin{align}\label{eqn:aim}
\underset{N\to\infty}\lim\big\vert \mathcal{F}[\alpha^{k(N)},\varphi]-\mathcal{F}[\alpha,\varphi]\big\vert=0\qquad \forall\, \varphi\in \widetilde{\mathscr{D}}.
\end{align}

Since $K$ is compact, $W_1$ and $W_2$ are equivalent metrics on $K$. The sequence $(\alpha^{k(N)})_{N\in\N}$ therefore converges to $\alpha$ uniformly in $C([0,T];(\mathcal{P}(K),W_1))$. By the characterisation of $W_1$ given by \eqref{eqn:W1char}, this implies that for any Lipschitz function $\eta:K\to\R$,
\begin{align}\label{eqn:unifConvLip}
\underset{N\to\infty}\lim\sup_{t\in[0,T]}\left\{\int_K\eta(z)\,\rd(\alpha^{k(N)}_t-\alpha_t)(z)\right\}=0.
\end{align}
For $\varphi=\phi\psi\in\widetilde{\mathscr{D}}$, where $\phi\in C^{\infty}_c(\R)$ and $\psi\in C^{\infty}_c(\R^3)$, we have
\begin{align*}
\int_0^{T}\int_{K}\partial_t \varphi(z,t)\,\rd(\alpha^{k(N)}_t-\alpha_t)(z)\,\rd t&= \int_0^{T} \phi^{\prime}(t)\left(\int_{K}\psi(z)\,\rd(\alpha^{k(N)}_t-\alpha_t)(z)\right)\,\rd t\\
&\leqslant T\sup_{t\in[0,T]}|\phi^{\prime}(t)|\sup_{t\in[0,T]}\bigg\vert\int_{K}\psi(z)\,\rd(\alpha^{k(N)}_t-\alpha_t)(z)\bigg\vert
\end{align*}
and
\begin{align*}
\int_0^{T}\int_{K}Jz\cdot\nabla \varphi(z,t)\,\rd(\alpha^{k(N)}_t-\alpha_t)(z)\,\rd t&=\int_0^{T} \phi(t)\left(\int_{K}Jz\cdot\nabla \psi(z)\,\rd(\alpha^{k(N)}_t-\alpha_t)(z)\right)\,\rd t\\
&\leqslant T\sup_{t\in[0,T]}|\phi(t)|\sup_{t\in[0,T]}\bigg\vert\int_{K}Jz\cdot\nabla \psi(z)\,\rd(\alpha^{k(N)}_t-\alpha_t)(z)\bigg\vert.
\end{align*}
Therefore, by \eqref{eqn:unifConvLip},
\begin{align}\label{eqn:conv1}
\underset{N\to\infty}\lim\int_0^{T}\int_{K}\big(\partial_t \varphi(z,t)+Jz\cdot\nabla \varphi(z,t)\big)\,\rd(\alpha^{k(N)}_t-\alpha_t)(z)\,\rd t=0.
\end{align}
Moreover, since uniform convergence implies pointwise convergence, we have
\begin{align}\label{eqn:conv2}
\underset{N\to\infty}\lim\left(\int_K\varphi(z ,T)\,\rd(\alpha_{T}-\alpha^{k(N)}_{T})(z)-\int_K\varphi(z ,0)\,\rd(\alpha_{0}-\alpha^{k(N)}_{0})(z)\right)=0.
\end{align}
Finally, let
\begin{align*}
L_{\varphi}=\underset{t\in[0,T]}\sup\underset{x\in\R^3}\sup |D^2\varphi(x,t)|
\end{align*}
and, again, let $R>0$ be such that $\Omega\subset B_R(0)$. By the Cauchy-Schwarz inequality in $L^2(\Omega;\R^3)$ and the Lipschitz continuity of $\mathcal{B}$ (Theorem \ref{thm:quantStab} and \cite[Theorem 3.1]{merigot2020quantitative}), for some constant $C>0$ depending only on $\Omega$ and $K$,
\begin{align*}
&\int_0^T\int_{\Omega}Jx\cdot\left(\nabla\varphi(\nabla \mathcal{B}[\alpha_t](x),t)-\nabla\varphi\big(\nabla \mathcal{B}[\alpha^{k(N)}_t](x),t\big)\right)\, \rd x\, \rd t\\
&\leqslant T R L_{\varphi}\sup_{t\in[0,T]}\|\nabla \mathcal{B}\left[\alpha_t\right]-\nabla \mathcal{B}[\alpha^{k(N)}_t]\|_{L^2(\Omega;\R^3)}\\
&\leqslant T R L_{\varphi}C\left(\sup_{t\in[0,T]}W_2(\alpha_t,\alpha^{k(N)}_t)\right)^{\frac{2}{15}}.
\end{align*}
Hence,
\begin{align}\label{eqn:conv3}
\underset{N\to\infty}\lim\int_0^{T}\int_{\Omega}Jx\cdot\left(\nabla\varphi(\nabla \mathcal{B}[\alpha_t](x),t)-\nabla\varphi\big(\nabla \mathcal{B}[\alpha^{k(N)}_t](x),t\big)\right)\, \rd x\, \rd t=0.
\end{align}
By combining \eqref{eqn:conv1}, \eqref{eqn:conv2} and \eqref{eqn:conv3}, we see that \eqref{eqn:aim} holds.

To complete the proof we note that each discrete solution $\alpha^{k(N)}$ is energy-conserving so, by continuity of the geostrophic energy functional $E$ on $\mathcal{P}(K)$ (see Remark \ref{rem:energy}), for any $t\in[0,T]$
\begin{align*}
E[\alpha_t]=\lim_{N\to\infty}E[\alpha^{k(N)}_t]=\lim_{N\to\infty}E[\overline{\alpha}^{k(N)}]=E[\overline{\alpha}],
\end{align*}
which means that $\alpha$ is also energy-conserving.
\end{proof}

\section{Explicit solutions}\label{sect:explicit}
Here we consider two special cases (Examples \ref{exa:singleMass} and \ref{exa:twoMasses}) where we can obtain explicit expressions for discrete geostrophic solutions. In Example \ref{exa:Leb} we show that $\mathcal{L}^3\mres\Omega$ is an equilibrium solution of \eqref{eqn:SGgeoActTrans} and that its optimal quantisers are equilibrium solutions of the corresponding ODE-IVP \eqref{eqn:ODE-IVP}.

\begin{example}[A single mass]\label{exa:singleMass}
{\normalfont First we consider the case of a single Dirac mass. This example has been discussed in both \cite[Section 5]{faria2009weak} and \cite[Section 2.2]{feldman2015semi} in the context of the $2$-dimensional semi-geostrophic equations on the physical domain $B_1(0)\subset \R^2$. In contrast with previous approaches, which use approximations of the Dirac mass by characteristic functions on balls, our solution follows immediately from the characterisation of discrete geostrophic solutions in terms of the ODE-IVP \eqref{eqn:ODE-IVP}. 

Let $\Omega\subset \R^3$ be open, bounded and convex, denote by $x_{\Omega}$ the centroid of $\Omega$, and let $\overline{z}\in\R^3$. By an argument analogous to the proof of Lemma \ref{lem:odeChar}, a map $\alpha\in C([0,T];\mathcal{P}_c(\R^3))$ given by
\begin{align*}
\alpha_t=\delta_{z(t)}
\end{align*}
is a $k$-times continuously differentiable discrete geostrophic solution with initial measure $\overline{\alpha}=\delta_{\overline{z}}$ if and only if
$z:[0,T]\to\R^3$ is a $k$-times continuously differentiable solution of the ODE
\begin{align}\label{eqn:1Dode}
\dot{z}=J(z-x_{\Omega})
\end{align}
satisfying the initial condition $z(0)=\overline{z}$. Such a map $z$ is unique and is given by
\begin{align*}
z(t)=\mathrm{e}^{tJ}\left(\overline{z}-x_{\Omega}\right)+x_{\Omega}\qquad \forall\, t\in [0,T].
\end{align*}
Moreover,
\begin{align*}
\nabla \mathcal{B}[\alpha_t]=z(t)\qquad \forall\, t\in [0,T].
\end{align*}
Let $z_3=z\cdot e_3$. By equation \eqref{eqn:energy}, the corresponding geostrophic energy satisfies
\begin{align*}
\done{}{t}\left(E[\alpha_t]\right)&=\done{}{t}\left(\frac{1}{2}\int_{\Omega}|x-z(t)|^2\, \rd x-\frac{1}{2}z^2_3(t)-\frac{1}{2}\int_{\Omega}x^2_3\, \rd x\right)\\
&=-\int_{\Omega}(x-z(t))\cdot \dot{z}(t)\, \rd x -z_3(t)\dot{z}_3(t)\\
&=(z(t)-x_{\Omega})\cdot J(z(t)-x_{\Omega})=0.
\end{align*}
Hence $\alpha_t$ is energy-conserving.}
\end{example}

\begin{example}[Two masses in a ball]\label{exa:twoMasses}
{\normalfont Let $\Omega\subset \R^3$ be the ball of volume $1$ centred at the origin, and let 
\begin{align*}
\overline{\alpha}=m\delta_{\oz_1}+(1-m)\delta_{\oz_2},
\end{align*}
where $\oz_{1}, \oz_{2}\in\R^3$ are distinct and $m\in (0,1/2]$. To construct a geostrophic solution with initial measure $\overline{\alpha}$ we make the ansatz
\begin{align*}
\alpha_t=m\delta_{z_1(t)}+(1-m)\delta_{z_2(t)},
\end{align*}
where each $z_i:[0,T]\to\R^3$ is continuously differentiable. This yields the ODE-IVP
\begin{align}\label{eqn:2massODEIVP}
\begin{cases}
\ \dot{z}_i=J(z_i-x_i(z_1,z_2)),\\
\ z_i(0)=\oz_i,
\end{cases}
\end{align}
for $i\in\{1,2\}$, where $x_i(z_1,z_2)$ denotes the centre of mass of the $i^{\text{th}}$ cell in the optimal Laguerre tessellation generated by $(z_1,z_2)$. 

Due to its simple shape, Laguerre tessellations of $\Omega$ which are generated by two seeds can be easily characterised. Indeed, for any given $(z_1,z_2)\in \R^{6}$ such that $z_1\neq z_2$, the boundary between the Laguerre cells $C_1$ and $C_2$ generated by $(z_1,z_2)$ is necessarily the intersection of the ball with a plane perpendicular to the vector $z_1-z_2$. Hence $C_1$ is the spherical cap of mass $m$ whose base has outward pointing normal vector $(z_2-z_1)/|z_2-z_1|$, and $C_2$ is its complement in $\Omega$. The centre of mass of the Laguerre cell $C_1$ must therefore lie some positive distance $r$, depending only on the mass $m$, along the axis defined by the vector $z_1-z_2$ about which $C_1$ is rotationally symmetric. Hence,
\begin{align}\label{eqn:x1eg}
x_1(z_1,z_2)=r(m)\frac{z_1-z_2}{|z_1-z_2|}.
\end{align}
Moreover, the centre of mass of the whole ball is the origin so
\begin{align}\label{eqn:x2eg}
mx_1+(1-m)x_2=0 \quad \implies \quad x_2=\frac{mr(m)}{m-1}\frac{z_1-z_2}{|z_1-z_2|}.
\end{align}

Using these observations, \eqref{eqn:2massODEIVP} becomes 
\begin{align}
&\dot{z}_1= J\left(z_1-r\frac{z_1-z_2}{|z_1-z_2|}\right),\label{eqn:z1eg}\\
&\dot{z}_2=J\left(z_2-\frac{mr}{m-1}\frac{z_1-z_2}{|z_1-z_2|}\right).\label{eqn:z2eg}
\end{align}
Subtracting \eqref{eqn:z2eg} from \eqref{eqn:z1eg} yields the following ODE-IVP for the difference $Z:=z_1-z_2$:
\begin{align}\label{eqn:Zode}
\begin{cases}
\dot{Z}=J\left(Z-q\frac{Z}{|Z|}\right),\vspace{6pt}\\
Z(0)=\oz_1-\oz_2=:\overline{Z},
\end{cases}
\end{align}
where
\begin{align*}
q=q(m)=\left(\frac{1}{1-m}\right)r(m)
\end{align*}
is a positive constant determined by $m$ alone. To solve \eqref{eqn:Zode}, we first note that due to the skew symmetry of $J$
\begin{align*}
\done{}{t}|Z|^2=2Z\cdot \dot{Z}=0\quad \implies\quad |Z(t)|=|\overline{Z}| \quad \forall\, \ t\in [0,T].
\end{align*}
Hence, \eqref{eqn:Zode} becomes
\begin{align*}
\begin{cases}
\dot{Z}(t)=\left(1-\frac{q}{|\overline{Z}|}\right)JZ(t),\\
Z(0)=\overline{Z}.
\end{cases}
\end{align*}
Its solution is the map $Z:[0,T]\to\R^3$ given by
\begin{align*}
Z(t)=e^{\omega t J}\overline{Z},
\end{align*}
where $\omega=1-q/|\overline{Z}|$. That is,
\begin{align*}
Z(t)=\left(\begin{array}{c c c}
\cos(\omega t) & -\sin(\omega t) & 0\\
\sin(\omega t)  & \cos(\omega t) & 0\\
0 & 0 & 1
\end{array}\right)\overline{Z}.
\end{align*}
Hence, equations \eqref{eqn:z1eg} and \eqref{eqn:z2eg} decouple and we obtain two linear inhomogeneous ODE-IVPs for $z_1$ and $z_2$. Noting that $\omega\neq 1$ since $q>0$, we use Duhamel's formula and find that
\begin{align*}
z_1(t)&=
\mathrm{e}^{tJ}\oz_1-\frac{r}{\omega-1}\left(e^{\omega tJ}-\mathrm{e}^{tJ}\right)\frac{\oz_1-\oz_2}{|\oz_1-\oz_2|},\\
z_2(t)&=
\mathrm{e}^{tJ}\oz_2+\frac{mr}{(1-m)(\omega-1)}\left(e^{\omega tJ}-\mathrm{e}^{tJ}\right)\frac{\oz_1-\oz_2}{|			\oz_1-\oz_2|}.
\end{align*}
To conclude, recalling the expressions for the centroids given by \eqref{eqn:x1eg} and \eqref{eqn:x2eg}, we note that $x_1$ and $x_2$ simply rotate anti-clockwise around the vertical coordinate axis with angular frequency $\omega$.}
\end{example}

\begin{example}[Equilibrium solutions]\label{exa:Leb}
{\normalfont The Lebesgue measure restricted to $\Omega$ is an equilibrium solution of \eqref{eqn:SGgeoActTrans}. Indeed, let $\alpha=\mathcal{L}^3\mres\Omega$. Then $\nabla\mathcal{B}[\alpha]=\mathrm{id}_{\Omega}$, which implies that $\nabla \mathcal{B}[\alpha]^*(x)=x\ \forall\, x\in\Omega$, which in turn implies that $\mathcal{W}[\alpha]=0$ on $\mathrm{supp}(\alpha)$. Let
\begin{align*}
\alpha^N=\sum_{i=1}^Nm_i\delta_{z_i}
\end{align*}
be an optimal quantiser of $\mathcal{L}^3\mres\Omega$. By, e.g., \cite[Proposition 3.1]{DuFaberGunzburger1999}, \cite[Corollary 4.3]{graf2007foundations}, the seeds $\z=(z_1,...,z_N)$ generate a \emph{centroidal Voronoi tessellation of $\Omega$}, i.e,
\begin{align*}
x_i(\z)=z_i\quad \forall\, i\in\{1,...,N\}.
\end{align*}
This means that $W(\z)=0$ so $\z$ is an equilibrium solution of the ODE-IVP \eqref{eqn:ODE-IVP}. The behaviour of $\mathcal{L}^3\mres\Omega$ under the dynamics of equation \eqref{eqn:SGgeoActTrans} therefore agrees exactly with that of its optimal discrete approximants.
}
\end{example}

\section{Numerical simulations}\label{sect:num}
One advantage of the constructive existence proof given in this paper is that it naturally leads to a numerical method (a meshfree method) and moreover it tells us something about the convergence of this method. To be precise, given an initial measure $\overline{\alpha} \in \mathcal{P}_c(\mathbb{R}^3)$, we can construct a numerical approximation of a solution of the semi-geostrophic equations \eqref{eqn:SGGdist} as follows:
\begin{enumerate}[leftmargin=*]
\item Approximate $\overline{\alpha}$ by a discrete measure $\overline{\alpha}^N=\sum_{i=1}^N m_i \delta_{\overline{z}_i}$. This leads to the semi-discrete numerical scheme (continuous in time, discrete in space) used in the proof of Theorem \ref{thm:2}, where an exact solution  $\alpha^N=\sum_{i=1}^N m_i \delta_{z_i(t)}$ of \eqref{eqn:SGGdist} with initial condition $\overline{\alpha}^N$ is constructed by solving the system of ODEs $\dot{\mathbf{z}}=W(\mathbf{z})$ given in equation \eqref{eqn:ODE-IVP}.
    To turn this into a bona fide numerical method we require a further discretisation:
\item Use a time-stepping scheme to approximately solve the ODE $\dot{\mathbf{z}}=W(\mathbf{z})$. Every evaluation of the vector field $W$ requires a further numerical approximation; to evaluate $W(\mathbf{z}(t))$ we must solve the semi-discrete transport problem $W_2(\mathcal{L}^3 \mres \Omega, \alpha_t)$
    in order to compute the centroids $\mathbf{x}(\mathbf{z}(t))$.
\end{enumerate}

We demonstrate the viability of this numerical method by giving an example in two dimensions, implemented in MATLAB. Due to a lack of space we postpone the three-dimensional implementation and a more thorough numerical study to a further paper. Before giving the example we briefly discuss some implementation issues and convergence.

\paragraph{ODE solver.} We used the explicit Runge-Kutta scheme RK4 \cite[Example 5.13]{LeVeque2007} to solve the ODE $\dot{\mathbf{z}}=W(\mathbf{z})$. For larger simulations it may be better to use a linear multistep method \cite[Section 5.9]{LeVeque2007} since each evaluation of the vector field $W$ is expensive (linear multistep methods only require one new vector field evaluation per time step, whereas RK4 requires four per time step).

\paragraph{Semi-discrete transport solver.} The semi-discrete transport problem $W_2(\mathcal{L}^3 \mres \Omega, \alpha_t)$ can be solved by maximising the concave function $g$ (see equation \eqref{eq:g}) as described in Section \ref{sect:SDOT}. We did this in MATLAB using a quasi-Newton method (the MATLAB function \emph{fminunc}). Every evaluation of $g$ and its gradient requires a Laguerre diagram to be computed, which we did using the MATLAB function \emph{power\_bounded} \cite{PowerBounded}. This simple method, which is described in more detail in \cite[Algorithm 1 and Section 4]{BourneKokRoperSpanjer2020}, was sufficient for our proof of concept simulations here. In general, however, to maximise $g$ it would be much faster to use the damped Newton method from \cite{kitagawa2016convergence}, especially for large 3D simulations.

\paragraph{Convergence.} By Theorem \ref{thm:2} the sequence $(\alpha^N)_{N \in \mathbb{N}}$ generated by the semi-discrete scheme has a subsequence that converges (uniformly with respect to the Wasserstein metric) to a solution of the semi-geostrophic equations \eqref{eqn:SGGdist} with initial condition $\overline{\alpha}$. In particular, if equation \eqref{eqn:SGGdist} has a \emph{unique} weak solution with initial measure $\overline{\alpha}$, then the whole sequence of approximations $\alpha^N$ converges to the true solution. Local-in-time uniqueness of H\" older continuous periodic solutions of \eqref{eqn:SGGdist} was proved in \cite{loeper2006fully}, but is not known in general. By the conservation of the transport cost $W_2(\mathcal{L}^3 \mres \Omega, \alpha_t)$ (see Remark \ref{rem:ConsTransCost}) it is easy to see that the whole sequence $(\alpha^N)_{N\in\N}$ also converges in the very special case where the initial measure $\overline{\alpha}$ is the Lebesgue measure on $\Omega$. (Note that the Lebesgue measure is an equilibrium solution of \eqref{eqn:SGGdist} as discussed in Example \ref{exa:Leb}.) In general, proving convergence of the whole sequence $(\alpha^N)_{N\in\N}$ is beyond the scope of this paper, as is proving convergence of the fully discrete scheme. We will study these in a future paper.

\begin{figure}
\begin{subfigure}[b]{\textwidth}
    \centering
    \includegraphics[trim={0.5cm 0cm 0.75cm 0cm},clip,width=0.32\textwidth]{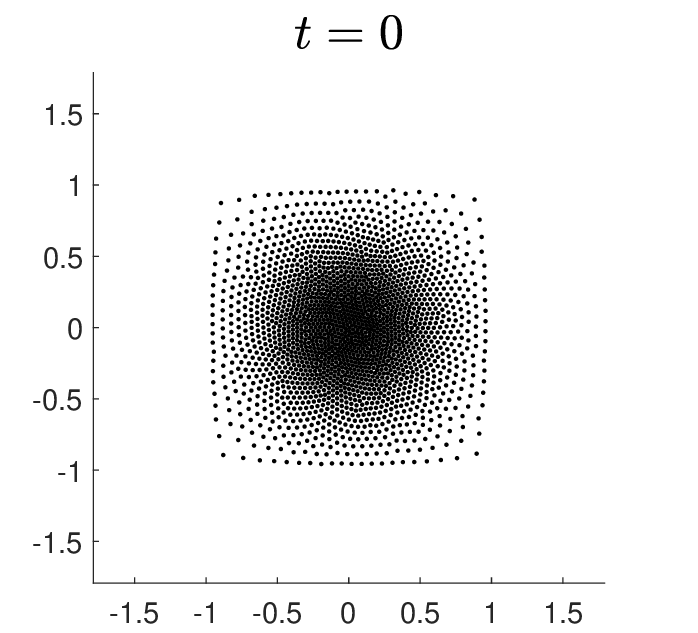}
\includegraphics[trim={0.5cm 0cm 0.75cm 0cm},clip,width=0.32\textwidth]{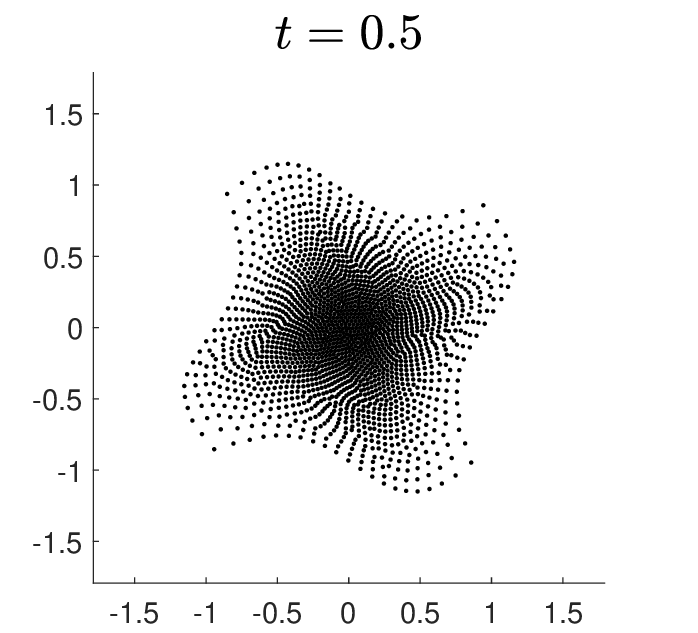}
  \includegraphics[trim={0.5cm 0cm 0.75cm 0cm},clip,width=0.32\textwidth]{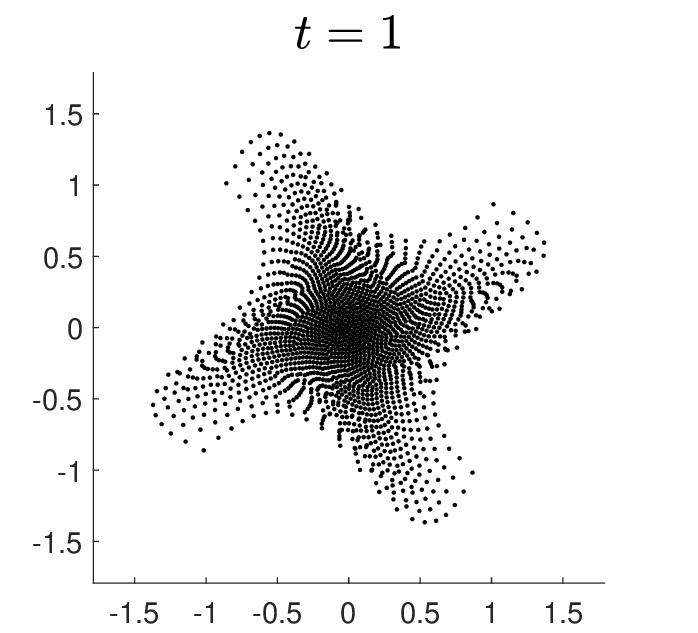}

\includegraphics[trim={0.5cm 0cm 0.75cm 0cm},clip,width=0.32\textwidth]{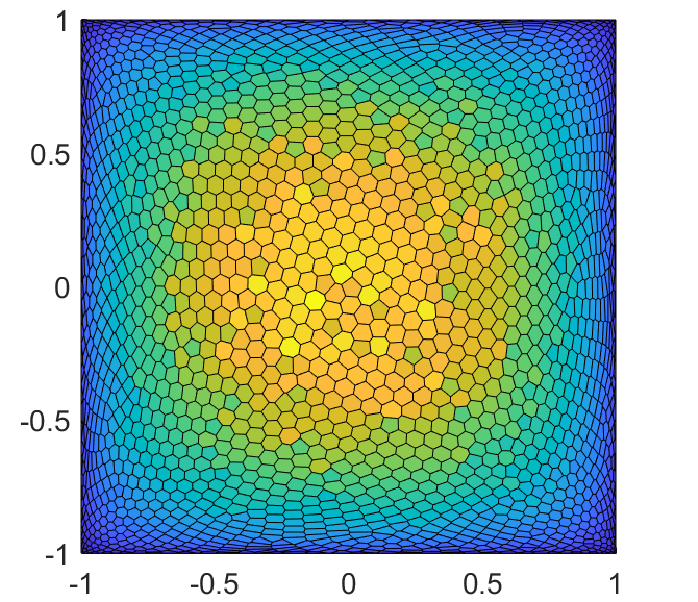}
\includegraphics[trim={0.5cm 0cm 0.75cm 0cm},clip,width=0.32\textwidth]{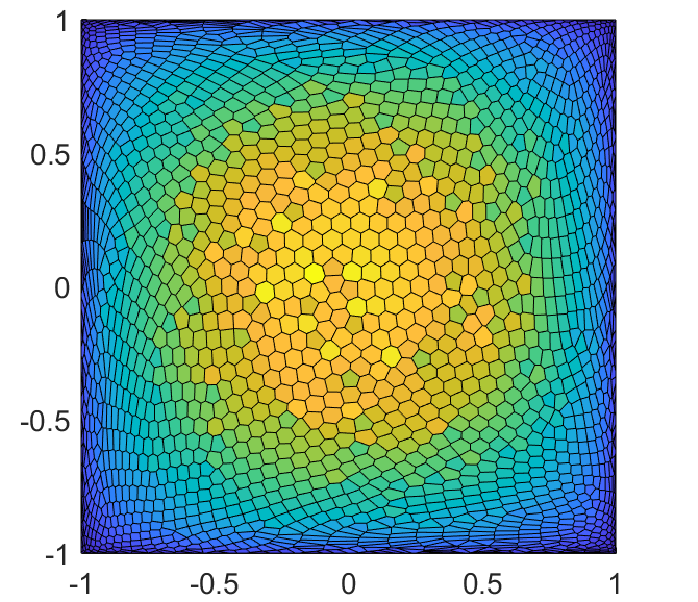}
  \includegraphics[trim={0.5cm 0cm 0.75cm 0cm},clip,width=0.32\textwidth]{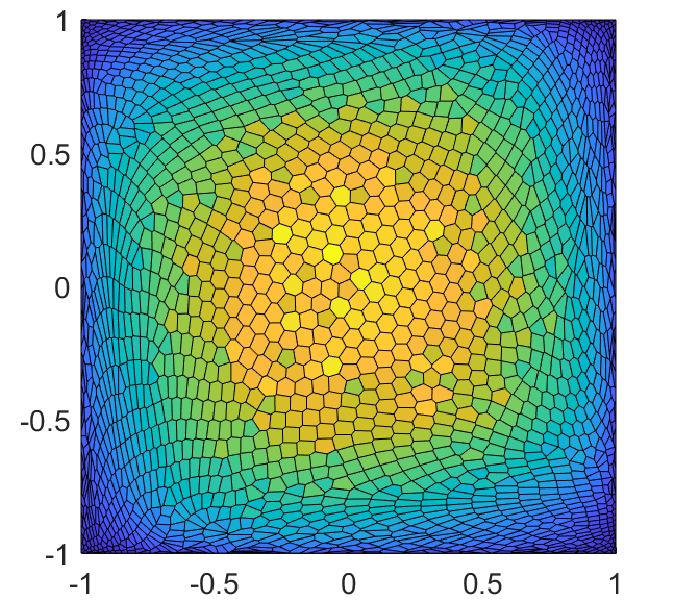}
  \end{subfigure}
  \vspace{0.03cm}

\begin{subfigure}[b]{\textwidth}
    \centering
  \includegraphics[trim={0.5cm 0cm 0.75cm 0cm},clip,width=0.32\textwidth]{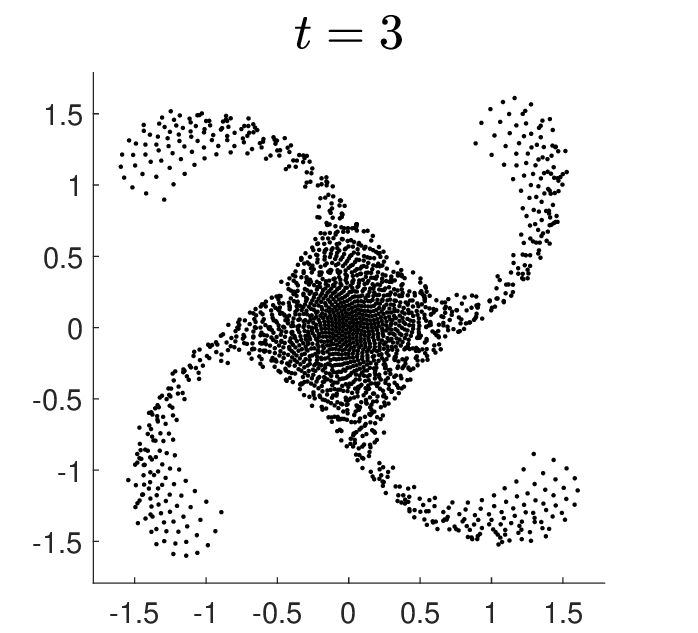}
\includegraphics[trim={0.5cm 0cm 0.75cm 0cm},clip,width=0.32\textwidth]{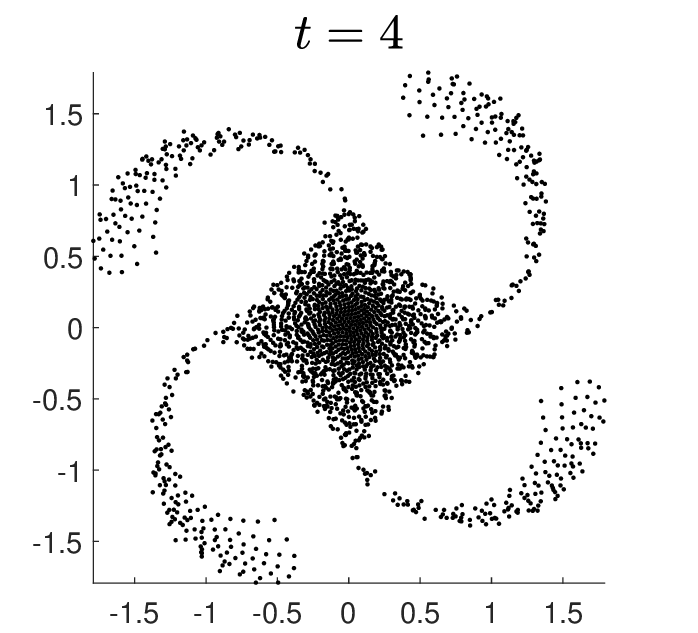}
\includegraphics[trim={0.5cm 0cm 0.75cm 0cm},clip,width=0.32\textwidth]{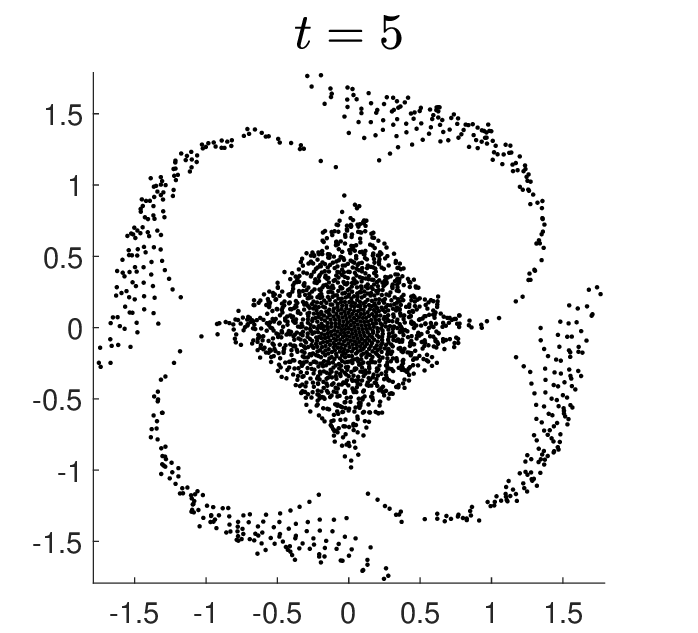}

  \includegraphics[trim={0.5cm 0cm 0.75cm 0cm},clip,width=0.32\textwidth]{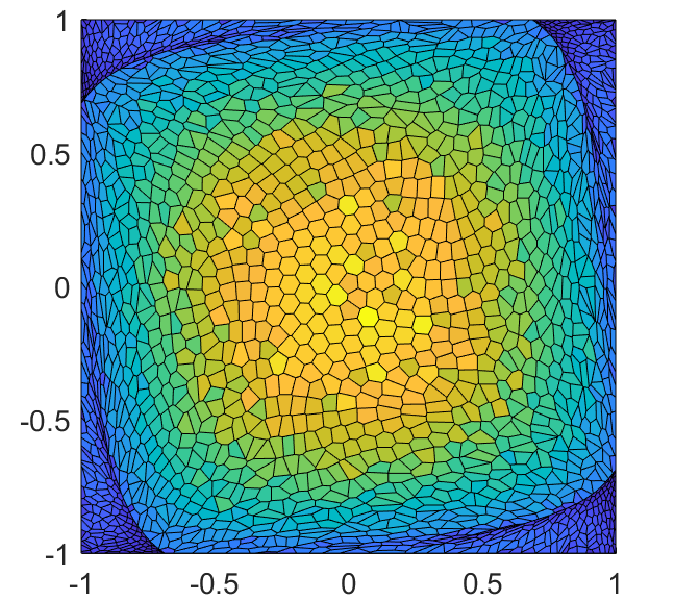}
\includegraphics[trim={0.5cm 0cm 0.75cm 0cm},clip,width=0.32\textwidth]{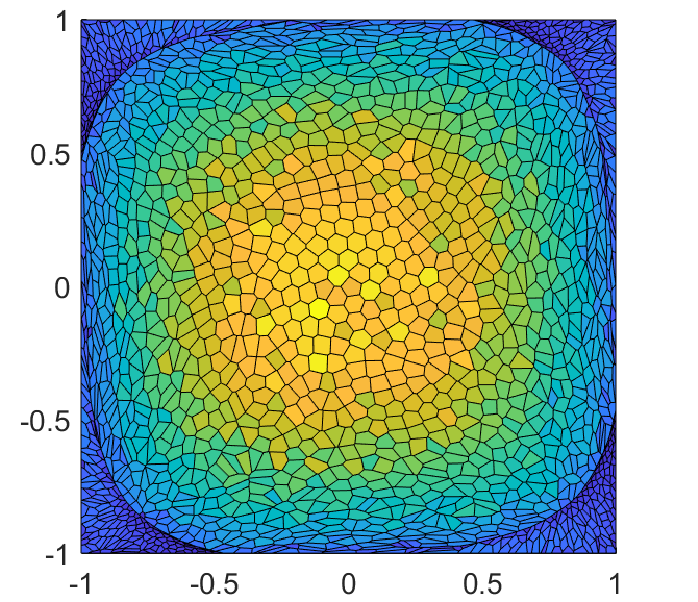}
\includegraphics[trim={0.5cm 0cm 0.75cm 0cm},clip,width=0.32\textwidth]{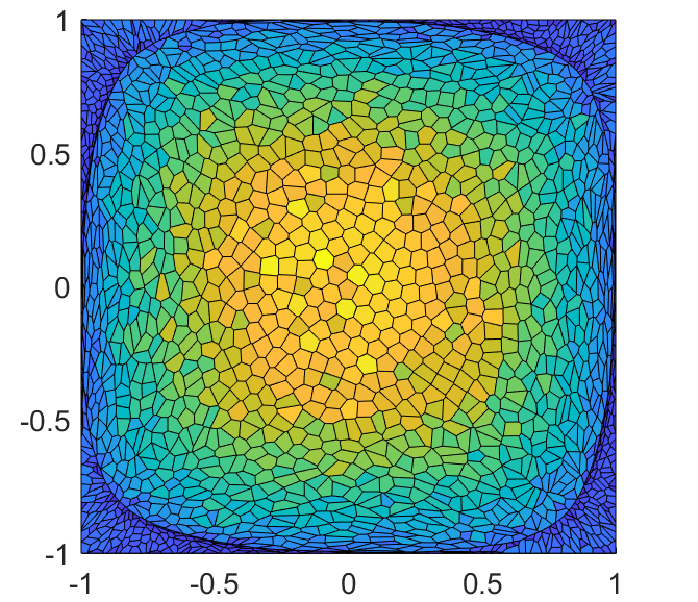}
  \end{subfigure}
  \caption{\label{Fig:Gaussian} Results of Example \ref{Example:Gaussian}. The seeds $z_i$ (black dots) and the corresponding Laguerre cells (coloured polygons) are illustrated at time steps $t=0,0.5,1,3,4,5$ for a Gaussian initial measure. The Laguerre cells are coloured according to their area (small cells in blue, large cells in yellow).}
  \end{figure}

\begin{example}[Gaussian initial condition]
\label{Example:Gaussian}
{\normalfont
Let $\Omega = [-1,1] \times [-1,1]$ and let the initial measure $\overline{\alpha}$ be absolutely continuous with respect to the Lebesgue measure with density $C \exp(-|x|^2) \mathds{1}_{\Omega}$, where $C>0$ is the normalisation constant satisfying
\[
\overline{\alpha}(\mathbb{R}^2) = \mathcal{L}^2(\Omega) \quad \Longleftrightarrow \quad C = 4 \left( \int_\Omega \exp(-|x|^2) \,  \rd x \right)^{-1}.
\]
Note that we have dropped the previous normalisation convention that $\overline{\alpha}(\mathbb{R}^2) = \mathcal{L}^2(\Omega)=1$, which is not necessary for the analysis; we simply require that $\overline{\alpha}(\mathbb{R}^2) = \mathcal{L}^2(\Omega)$. We approximated $\overline{\alpha}$ by a discrete measure of the form $\overline{\alpha}^N = \sum_{i=1}^N m_i \delta_{\overline{z}_i}$ with $N=2000$ seeds using $1000$ iterations of Lloyd's algorithm \cite{DuFaberGunzburger1999}.
We approximated the solution of the ODE-IVP \eqref{eqn:ODE-IVP} on the time interval $[0,T]$ with $T=5$ using the Runge-Kutta method RK4 with uniform time step size $h=0.01$. For the numerical maximisation of $g$ we used the following stopping condition: for all $i \in \{1,\ldots,N\}$,
\[
\left| \frac{\partial g}{\partial w_i} \right| < 10^{-2} \, \varepsilon \min_{j} m_j \quad \Longleftrightarrow \quad  |m_i-|C_i|| < 10^{-2} \, \varepsilon \min_{j} m_j
\]
with $\varepsilon=0.1$. This ensures that the areas of the Laguerre cells $C_i$ are correct to within $0.1 \%$.

The results are shown in Figure \ref{Fig:Gaussian} at time steps $t=0,0.5,1,3,4,5$. The black dots in the first and third rows are the (approximate) seed locations $z_i$. The polygons in the second and fourth rows are the (approximate) Laguerre cells $C_i$. The cells are coloured according to their target areas $m_i$, where blue corresponds to small cells and yellow corresponds to large cells. Note that all the seeds start off in $\Omega$ but they are not confined there. 

As an accuracy check, we repeated these simulations with a smaller time step size of $h=0.005$ and a finer optimal transport tolerance of $\varepsilon=0.05$. We found that the $x$- and $y$-coordinates of the seeds $z_i$ at the final time step $T=5$ agreed with our previous results to within $10^{-3}$. Recall from Remark \ref{rem:ConsTransCost} that the exact dynamics \eqref{eqn:ODE-IVP} preserves the transport cost:
\[
\frac{d}{dt} W_2(\mathcal{L}^2 \mres \Omega, \alpha_t) = 0.
\]
In our simulations (with $h=0.01$, $\varepsilon=0.1$) the transport cost was conserved to within $7.5 \times 10^{-7}$:
\[
\max_{n} \left| W_2^2(\mathcal{L}^2 \mres \Omega, \alpha_{t_n}) - \frac{1}{501}\sum_{m=0}^{500} W_2^2(\mathcal{L}^2 \mres \Omega, \alpha_{t_m}) \right| < 7.5 \times 10^{-7}
\]
where $t_n = nh$, $n \in \{0,1,\ldots,500\}$.}
\end{example}

\section*{Acknowledgements}
We thank both Mike Cullen and Colin Cotter for stimulating discussions related to this work. DPB would like to thank the UK Engineering and Physical Sciences Research Council (EPSRC) for financial support via the grant EP/R013527/2 Designer Microstructure via Optimal Transport Theory. CPE is supported by The Maxwell Institute Graduate School in Analysis and its Applications, a Centre for Doctoral Training funded by the EPSRC (grant EP/L016508/01), the Scottish Funding Council, Heriot-Watt University and the University of Edinburgh. BP and MW gratefully acknowledge the support of the EPSRC via the grant EP/P011543/1 Analysis of models for large-scale geophysical flows.

\newpage

\bibliographystyle{spmpsci}

\bibliography{SGbib}

\end{document}